\documentclass[reqno,11pt,letterpaper]{amsart}
\usepackage{amsmath,amssymb,amsthm,graphicx,mathrsfs,url}
\usepackage[usenames,dvipsnames]{color}
\usepackage[colorlinks=true,linkcolor=Red,citecolor=Green]{hyperref}
\usepackage{amsxtra}

\usepackage{bbm}
\usepackage{epstopdf}
\usepackage[dvipsnames]{xcolor}

\usepackage{verbatim}
\usepackage{graphicx}

\def\?[#1]{\textbf{[#1]}\marginpar{\Large{\textbf{??}}}}

\numberwithin{equation}{section}

\let\epsilon=\varepsilon % sorry Knuth
\newcommand{\dd}{\, {\rm d}}

\allowdisplaybreaks
\usepackage[%top=1.25in, bottom=1.25in,
left=0.65in, right=0.65in]{geometry}
\newcommand{\RR}{{\mathbb R}}

\newcommand{\R}{\mathbb{R}}

\usepackage{amsxtra}
\usepackage{epstopdf}
\newtheorem{theorem}{Theorem}
\newtheorem{proposition}{Proposition}[section]

\newtheorem{lemma}[proposition]{Lemma}
\newtheorem{corollary}[proposition]{Corollary}
\newtheorem{remark}{Remark}

\title[Well-Posedness near Rayleigh-Jeans Equilibria on the whole space]{Global Well-Posedness near Rayleigh-Jeans Equilibria for the Cubic NLS Wave Kinetic Equation}

\author{Miguel Escobedo}
\address{Departamento de Matem\'aticas, Euskal Herriko Unibertsitatea, Apartado 644, E--48080 Bilbao, Spain}
\email{miguel.escobedo@ehu.es}

\author{Angeliki Menegaki} 
\address{Department of Mathematics, Huxley building, South Kensington campus, Imperial College London, London SW7 2AZ, United Kingdom}
\email{a.menegaki@imperial.ac.uk}

\begin{document}

\maketitle

\begin{abstract} 
We study the dynamics of the kinetic wave equation associated to the three dimensional Schr\"{o}dinger equation close to Rayleigh-Jeans equilibria. We first prove that the linearised operator generates a semigroup of contractions in $L^2((0,\infty);\sqrt \omega \dd\omega )$. Considering the family of nonsingular Rayleigh-Jeans spectra, we prove that the linearised operator possesses a spectral gap, despite the non-compactness of the integral collisional operator,  and thus obtain an exponential relaxation for the linear semigroup. 
We then prove bilinear and trilinear estimates in the relevant norm for the nonlinear terms and deduce global well-posedness and exponential relaxation for sufficiently small relative perturbations of Rayleigh-Jeans equilibria. To our knowledge, this is the first global strong well-posedness and asymptotic stability result near a nonzero thermodynamic equilibrium for the full spatially homogeneous
four-wave kinetic equation associated with the cubic
nonlinear Schr\"odinger equation.

%Then the full non-linear Boltzmann equation is solved perturbatively for initial data close to Rayleigh-Jeans equilibrium. Global in time well-posedness is thus deduced in $L^2((0,\infty);\sqrt \omega \dd\omega )$, after controlling the nonlinearities in the whole frequency space in the relevant norm.
%We study the dynamics of the kinetic wave equation associated to the 3d Schr\"{o}dinger equation \textcolor{blue}{close to  Rayleigh Jeans equiibria}. We first study the linearised operator around non-singular Rayleigh-Jeans equilibria, and we show existence of a spectral gap and thus exponential relaxation of the linear semigroup in the natural-for-dissipation functional space $L^2(\sqrt{\omega})$. 
%Then the full non-linear Boltzmann equation  is solved for
%initial data close to equilibrium. The non-linearity is treated as a perturbation
%of the linear problem. Global in time well-posedness in $L^2(\sqrt{\omega})$ is proved, after controlling the nonlinearities in the whole frequency space.
\end{abstract}

\tableofcontents

\section{Introduction}

In this article we establish the nonlinear asymptotic stability of the nonsingular Rayleigh-Jeans equilibria for the spatially homogeneous and isotropic four-wave kinetic equation associated with the cubic nonlinear Schr\"odinger equation on $\mathbb{R}^3$. 
Our analysis is performed on the
whole frequency domain, in particular without introducing an ultraviolet cutoff and yields the first global strong well-posedness and asymptotic stability result near thermodynamic equilibria for the spatially homogeneous wave kinetic equation associated with the cubic nonlinear Schr\"odinger equation. 
This article thus contributes to the broader programme in wave turbulence theory/wave kinetic theory concerned with understanding the global-in-time dynamics of wave kinetic equations and, in particular, with determining which classes of solutions remain globally regular and which may develop singular behaviour or condensation. 
This question is also emphasized by Deng and Hani in \cite[Section 1.5.3]{DengHani3}, where they ask whether condensation should be expected generically or only for particular classes of solutions. We identify a rigorous perturbative regime in which condensation does not occur: sufficiently small perturbations of a nonsingular Rayleigh-Jeans equilibrium remain global and relax exponentially to equilibrium.
%Thus, at least in the weighted $L^2$-topology considered here, condensation is not a universal feature of the dynamics.
%In this article we study the evolution of perturbations in an $L^2$ setting, of non-singular Rayleigh Jeans  spectra of the wave kinetic equation for Schr\"odinger waves on the whole space $\R^3$. 

\vspace{-0.5cm}

\subsection{Background of Wave Turbulence Theory}  
Wave turbulence theory is the statistical theory of large systems of weakly nonlinear wave systems generally evolving away from thermodynamic equilibrium. 
The theory aims to understand in a statistical way how the energy and the mass is then distributed in such random wave systems through the resonant interactions. The central equation describing the average distribution of energy is the kinetic wave equation. 

Early kinetic descriptions of wave systems appeared in 1929 in Peierls' theory of phonons, that is vibrations in weakly anharmonic crystals describing heat conduction, see \cite{Peierls1929}, but also in 1962 in Hasselmann's theory of nonlinear energy transfer for surface gravity waves, see \cite{Hasselmann:1962, Hasselmann1963}. The modern energy/mass  cascade picture was developed by Zakharov, L'vov and Falkovich \cite{ZakLvovFalkov}. The research area of wave turbulence theory thus is attracting more and more  attention due to its applications to a broad range of physical systems, including ocean waves,  gravity waves, phonons, plasma waves and nonlinear optics \cite{Dyachenko:1992}, elastic plates \cite{During:2017} but also Bose-Einstein condensates. We refer to the books \cite{Nazarenko, NewellRumpf, Zakharov98} for more details.

At the mathematical level, this formal theory has recently gained important attention with a lot of progress building its rigorous foundation. For the cubic nonlinear Schr\"odinger equation, in a
sequence of works, the homogeneous wave kinetic equation was rigorously derived from random weakly nonlinear Schr\"odinger dynamics, \cite{DengHani1,DengHani1/2, DengHani2, DengHani3} first up to small multiples of the kinetic timescale and subsequently
over arbitrarily long kinetic intervals for as long as sufficiently regular solutions of the kinetic equation exist. 
Beyond the cubic nonlinear Schr\"odinger equation, significant mathematical progress towards a wave turbulence theory for the quasilinear two-dimensional gravity water wave system is obtained in \cite{DengIonescuPusateriI,DengIonescuPusateriII}. Moreover, rigorous kinetic limits have been investigated for one-dimensional systems such as weakly anharmonic oscillator chains, see \cite{VassilevWu2026} for the FPUT system and \cite{Vassilev2025} for the MMT model. We refer also to \cite{StaffilaniTran2021} for the stochastic Zakharov-Kuznetsov dynamics, and also to  \cite{deSuzzoniStingoTouati2025} for a  coupled Klein-Gordon system. 

All  these results are conditional on the wave kinetic equation being well-posed, and thus they  make the development of a global solution theory
for the wave kinetic equation a natural and important problem. 

\subsection{The homogeneous kinetic wave equation} 
We denote by $\textbf{k} \in \mathbb{R}^3$ the frequency vector and by $\omega(\textbf{k}) = |\textbf{k}|^2$ the dispersion relation of the cubic nonlinear Schr\"odinger equation.

We are working under the isotropy assumption, where the density of waves function, $f$,  depends on
$\textbf{k}$ only through its energy:
$\widetilde f(t,\mathbf{k}) = f(t,|\mathbf{k}|^2).$ 

After absorbing a harmless positive constant into the time
variable, the isotropic and spatial homogeneous  wave kinetic equation reads 
\begin{equation}\label{eq:NL evolution1}
\begin{split}
    &\partial_t f(t,\omega) = \mathcal{C}(f(t))(\omega )\equiv -
     \iint_{D(\omega)}\Omega (\omega , \omega _1, \omega _2, \omega _3) q(f)\dd\omega _1 \dd\omega _2 \quad \text{ where} \\ 
&\Omega (\omega , \omega _1, \omega _2, \omega _3)= \frac{\text{min}(\sqrt{\omega},\sqrt{\omega_1},\sqrt{\omega_2},\sqrt{\omega_1+\omega_2-\omega})}{\sqrt{\omega}}, \\
    & q(f) \equiv q(f)(\omega , \omega _1, \omega _2, \omega _3) =  f f_2 f_3 + f f_1 f_3 - f_1 f_2 f - f_1 f_2 f_3,   \\
  &D(\omega ) = \left\{(\omega _1, \omega _2); \omega _1\ge 0,\,\omega _2\ge 0,\,\omega _1+\omega _2\ge \omega  \right\},
 \end{split}
\end{equation}
for $f(t, \omega)$ the  density of waves with energy $\omega$,  
with the usual notation $f=f(t, \omega )$, $f_i=f(t, \omega _i)$ for $i=1, 2, 3$ and    $\omega _3:= \omega_1+ \omega_2-\omega$.

%\textcolor{blue}{The equation may also be written using the frequency density of the waves: 
%\begin{equation}
%\label{eqfreq}
%\begin{split}
%&k=\omega ^{1/2}\\
%&\widetilde f(t, k)=f(t,\omega)
%\end{split}
%\end{equation}
%and reads   
%\begin{equation}\label{eq:NL evolution1b}
%\begin{split}
 %   &\partial_t \widetilde f(t,k) =\widetilde Q \left(\widetilde f(t) \right)(k)= -
  %   \iint_{\widetilde D(\omega)}\widetilde \Omega (k, k _1, k_2, k_3) q(\widetilde f) k_1 k_2 \dd k _1 \dd k_2 \quad  \text{ where} \\ 
%&\widetilde \Omega (k , k_1, k _2, k _3)=      \frac{\text{min}( k , k_1, k _2, k _3)}{k}, \\
  %   &q(\widetilde f) \equiv q(\widetilde f)(\omega , \omega _1, \omega _2, \omega _3) =  \widetilde f \widetilde f_2 \widetilde f_3 + \widetilde f \widetilde f_1 \widetilde f_3 - \widetilde f_1 \widetilde f_2 \widetilde f - \widetilde  f_1 \widetilde  f_2\widetilde  f_3,  \\
%  &\widetilde D(k) =\left\{( k _1, k _2)\in \RR_+^2;  k _1^2+k _2^2\ge k^2  \right\}.
% \end{split}
%\end{equation}}

The formal thermodynamic equilibria of the equation are the Rayleigh-Jeans distributions. The general form of Rayleigh-Jeans (RJ) spectra, in the $\omega$ variables, is
\begin{align*}
 \forall T \geq 0,\,\,\forall \mu_{\text{chem}} \le 0:\,\,\,\,\, n_{T, \mu_{\text{chem}}}(\omega) = \frac {T} {\omega -\mu_{\text{chem}} },\,\ \omega \ge 0,
\end{align*}

The nonsingular RJ distributions correspond to the case $\mu_{\text{chem}}<0$, whereas  $\mu_{\text{chem}}=0$ yields the singular RJ profile. Also $T=0$ yields the trivial one. By analogy with the gas of particles the two parameters $T$ and $\mu_{\text{chem}}$  are interpreted in the literature of physics as the temperature and chemical potential respectively of the system of waves. 
By homogeneity of the integral term of the equation, the temperature parameter $T$ of the RJ distributions may be absorbed via a change of time variable in the equation. Therefore,
%in order to simplify as much as possible our notation, and 
without any loss of generality, we  consider $T\equiv 1$, we set $\mu:=-\mu_{\text{chem}}$, and denote the family of RJ functions that we work with as 
$$  n_\mu (\omega )=\frac{1}{\omega +\mu  },\,\,\,  \mu \ge 0.$$

For every $\mu \ge 0$ the function $q(n_\mu)$ (the cubic nonlinearities) is identically zero and therefore, the RJ spectra are stationary solutions of \eqref{eq:NL evolution1}, in particular RJ profiles are the formal detailed-balance equilibria. 
Equation (\ref{eq:NL evolution1}) has additional stationary solutions called  Kolmogorov-Zakharov spectra, which are non-equilibrium solutions carrying nonzero wave-action or energy fluxes \cite{Dyachenko:1992}. Both types of solutions have been considered in the literature on wave turbulence (cf. \cite{Dyachenko:1992, ZakLvovFalkov}). The RJ spectra, contrary to the Kolmogorov-Zakharov spectra, have zero wave action and energy fluxes, describing a system of waves at equilibrium \cite{Dyachenko:1992}.

%The case $\mu =0$ is different due to its singularity at $\omega =0$ as was studied in our previous work  \cite{EscMenegaki26}. There, truncations of this profile generate a Dirac mass at zero frequency, while the corresponding linearised dynamics
%concentrates towards the origin in infinite time. Thus the instability mechanism of the singular Rayleigh-Jeans state found in \cite{EscMenegaki26} does not extend to the
%nonsingular family, in the perturbative topology considered here.

\subsection{The setting} Since in all our arguments the parameter $\mu \ge 0$ will be arbitrary but fixed, we further simplify our notation and denote the RJ distributions simply as $n$, and $n(\omega _i)$ as $n_i$ for $i=1, 2, 3$. It will always be clear by the context if $\mu >0$ or $\mu =0$. 

We consider for the moment any RJ equilibrium $n$. Then for solutions of the form
\begin{equation} \label{eq:perturbat_setting}
    f(t,\omega)= n (\omega) (1+ g(t,\omega)), 
\end{equation}
we look at the evolution of the perturbation $g$. The equation \eqref{eq:NL evolution1} then reads
\begin{equation}\label{eq:NL evolution}
\begin{split}
    \partial_t g(t,\omega) &= -n^{-1}(\omega)
     \iint_{D(\omega)}\Omega (\omega , \omega _1, \omega_2, \omega _3) q(f) \dd \omega_1 \dd \omega_2   = L_\mu g + \Gamma_\mu(g,g) + Q_\mu(g,g,g)
 \end{split}
\end{equation}
 where $L_\mu $ is the linear operator
\begin{equation} \label{eq: A_mu+K_mu}
\begin{split} 
L_\mu g & = \iint_{D(\omega)} \Omega (\omega , \omega _1, \omega _2, \omega _3) n_1n_2n_3 \Big[-\frac{g_0}{n}  - \frac{g_3}{n_3} + \frac{g_1}{n_1} + \frac{g_2}{n_2}   
\Big] \dd \omega_1 \dd \omega_2 \\
&=: -[A_\mu g](\omega) + [K_\mu g](\omega)
\end{split} 
\end{equation} 
where again $n=n(\omega )$, $n_i=n(\omega_i), i=1, 2, 3$, and 
where $A_\mu$ is the multiplication part of the operator, while $K_\mu$ is the integral operator containing the remaining three terms. The term  $ \Gamma_\mu (g,g)$ contains the quadratic nonlinearities
%: 
%\begin{equation}\label{eq: def of Gamma}
%\begin{split} 
%\Gamma_\mu(g,g)& = \iint_{D(\omega)} \Omega (\omega , \omega _1, \omega _2, \omega _3)
%n_1n_2n_3 
%\times \\ &\times 
%\Big[ -g_0g_3 ( n_2^{-1} + n_1^{-1} ) + 
 %   g_0g_1 ( n_3^{-1} - n_2^{-1}) + g_0g_2 (n_3^{-1} - n_1^{-1}) \\ 
 %   &\hspace{1cm} + 
  %  g_3g_1 (n_1^{-1} - n_3^{-1}) + g_3g_2 (n_2^{-1} - n_3^{-1}) + g_1g_2 (n_2^{-1} + n_1^{-1}  )
%\Big] \dd \omega_1 \dd \omega_2, 
%\end{split}
%\end{equation} 
 and $Q_\mu (g,g,g)$ contains the cubic nonlinearities.
 %: 
%\begin{equation}\label{eq: def of Q}
%\begin{split} 
 %Q_\mu(g,g,g)
 %= 
 %&\iint_{D(\omega)} \Omega (\omega , \omega _1, \omega _2, \omega _3)   n_1n_2n_3\times \\
 % &\hspace{0.7cm}\times \Big[   n_3^{-1} g_0g_1g_2  +  n_0^{-1}g_1g_2g_3 -  n_1^{-1}g_0g_2g_3  -  n_2^{-1}g_0g_1g_3 \Big] \dd \omega_1 \dd \omega_2.
 %\end{split}
%\end{equation}  
Their explicit expressions are given in Section \ref{sec:LWP}, Eq. \eqref{eq: def of Gamma}, \eqref{eq: def of Q}, respectively.

\subsection{Main results}
We denote by $\Pi_\mu$ (or simply by $\Pi$) the orthogonal projection onto $\operatorname{Ker}(L_\mu)$. We start with a proposition determining the collisional invariants (the $\operatorname{Ker}(L_\mu)$) in $L^2(\sqrt{\omega}\dd\omega)$. 
 
\begin{proposition}[Kernel of the linearised operator] \label{Prop:Kernel_L}
    Let $\widetilde n_\mu(\textbf{k})=\frac1{|\textbf{k}|^2+\mu}, \mu > 0,$ and $\tilde g \in L^2(\mathbb{R}^3)$. Then $\widetilde g \in \operatorname{Ker}(\widetilde L_\mu)$ if and only if $\Phi(\textbf{k}) := \frac{ \widetilde g(\textbf{k})}{\widetilde n_\mu (\textbf{k})}$ is a collisional invariant,
    namely
$$ \Phi(\textbf{k}_1)+\Phi(\textbf{k}_2) = \Phi(\textbf{k})+\Phi(\textbf{k}_3) $$ for almost every quadruple satisfying
$$
\textbf{k}_1+\textbf{k}_2 = \textbf{k}+\textbf{k}_3, \quad  |\textbf{k}_1|^2+|\textbf{k}_2|^2 =
    |\textbf{k}|^2+|\textbf{k}_3|^2, 
$$
if and only if  
    $$
    \widetilde g(\textbf{k})= \widetilde n_\mu(\textbf{k}) ( \alpha  + \beta \cdot \textbf{k} + \gamma |\textbf{k}|^2 ),\ \qquad \ \text{ for some } \alpha, \gamma \in \mathbb{R},\ \beta \in \mathbb{R}^3,$$ 
    for almost every $\textbf{k}\in \mathbb{R}^3$,  with the corresponding right-hand side belonging in $L^2(\mathbb{R}^3)$. 

    Among these collisional invariants only  $\widetilde n_\mu \in L^2(\mathbb{R}^3)$ and so it holds that $$\operatorname{Ker}_{L^2(\mathbb{R}^3)}( \widetilde L_\mu) = \langle \widetilde n_\mu \rangle.$$ 
    
    It follows that, in radial energy variables, $L^2((0,\infty), \sqrt{\omega} \dd\omega)$, $g$ belongs to $\operatorname{Ker}_{L^2(\sqrt{\omega})}( L_\mu)$ if and only if  $g(\omega )=a n(\omega )$ for some $a\in \RR$.
\end{proposition}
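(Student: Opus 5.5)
Three things need to be shown: that $\operatorname{Ker}(\widetilde L_\mu)$ is characterised by collisional invariants, which collisional invariants exist in $L^2$, and what this gives in the radial energy variables. The first step is the standard symmetrisation of the quadratic form. With $\Phi=\widetilde g/\widetilde n_\mu$, the three-dimensional linearised operator has the form
\[
\widetilde L_\mu \widetilde g(\textbf{k}) = \int \delta(\Sigma\textbf{k})\,\delta(\Sigma\omega)\,\widetilde n_1\widetilde n_2\widetilde n_3\,\big[\Phi_1+\Phi_2-\Phi-\Phi_3\big]\,\dd\textbf{k}_1\dd\textbf{k}_2\dd\textbf{k}_3 .
\]
This uses $q(n)=0$, which is equivalent to $\widetilde n_1\widetilde n_2\widetilde n_3\widetilde n_0(\widetilde n_0^{-1}+\widetilde n_3^{-1}-\widetilde n_1^{-1}-\widetilde n_2^{-1})=0$. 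The relevant pairing is $\langle \widetilde L_\mu \widetilde g,\widetilde h\rangle$ in $L^2(\mathbb{R}^3)$; since the variables are $\widetilde g$ rather than $\widetilde g/\widetilde n$, it is natural to pair with $\widetilde n\,\Psi$, where $\Psi=\widetilde h/\widetilde n$. Multiplying by $\widetilde n\,\Phi$ and using the symmetries $\textbf{k}\leftrightarrow\textbf{k}_3$, $\textbf{k}_1\leftrightarrow\textbf{k}_2$ and $(\textbf{k},\textbf{k}_3)\leftrightarrow(\textbf{k}_1,\textbf{k}_2)$ of the resonant manifold gives
\[
\langle \widetilde L_\mu\widetilde g,\widetilde n\Phi\rangle = -\tfrac14\int \delta\delta\,\widetilde n\widetilde n_1\widetilde n_2\widetilde n_3\,(\Phi+\Phi_3-\Phi_1-\Phi_2)^2 \le 0 .
\]
If $\widetilde L_\mu\widetilde g=0$, this integral vanishes, so $\Phi+\Phi_3=\Phi_1+\Phi_2$ almost everywhere on the resonant manifold with respect to the positive measure $\delta\delta\,\prod \widetilde n_i$. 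Conversely, any collisional invariant $\Phi$ makes the integrand vanish identically, so $\widetilde g\in\operatorname{Ker}$. For finiteness of the pairing, I plan to first work with truncations, or to justify via Cauchy–Schwarz using $\widetilde n\le \mu^{-1}$ together with the integrability of the kernel against $L^2$ functions, which I would take from the boundedness estimates for $A_\mu,K_\mu$ proved later or established alongside.

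The second step is to solve the functional equation for measurable (a.e.) $\Phi$. This is the classical Boltzmann-type collisional invariant problem, but with the quartic resonance. Here the main obstacle is regularity, because we only know $\Phi\in L^2_{loc}$ and the identity only almost everywhere. My plan is to mollify: convolve $\Phi$ with a radial approximate identity $\rho_\epsilon$. The resonance set is invariant under simultaneous translation of all four vectors, and also under simultaneous rotation, and the mollified function $\Phi_\epsilon$ still satisfies the identity everywhere, since the averaging is over the group action preserving the manifold. Then, for smooth $\Phi_\epsilon$, I parametrise the resonances as $\textbf{k}_1=\textbf{k}+\textbf{a}$, $\textbf{k}_2=\textbf{k}+\textbf{b}$, $\textbf{k}_3=\textbf{k}+\textbf{a}+\textbf{b}$ with $\textbf{a}\cdot\textbf{b}=0$. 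Differentiating twice with respect to $\textbf{a}$ and $\textbf{b}$ yields $D^2\Phi_\epsilon(\textbf{k})[\textbf{a},\textbf{b}]=0$ for all orthogonal $\textbf{a},\textbf{b}$. Hence $D^2\Phi_\epsilon$ is a multiple of the identity, $c(\textbf{k})I$. Compatibility of third derivatives in dimension $3\ge2$ then forces $c$ to be constant, so $\Phi_\epsilon=\alpha_\epsilon+\beta_\epsilon\cdot\textbf{k}+\gamma_\epsilon|\textbf{k}|^2$. Letting $\epsilon\to0$ in $L^2_{loc}$, and using that this finite-dimensional space is closed, gives the claimed form. The converse is a direct check using momentum and energy conservation.

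The last step is to identify which invariants are in $L^2$ and pass to radial variables. For $\widetilde n_\mu(\alpha+\beta\cdot\textbf{k}+\gamma|\textbf{k}|^2)$, the size at infinity is $|\alpha+\beta\cdot\textbf{k}+\gamma|\textbf{k}|^2|/|\textbf{k}|^2$. If $\gamma\ne0$ this tends to $|\gamma|$, so the function is not in $L^2$. If $\gamma=0$ and $\beta\ne0$, it is of order $|\textbf{k}|^{-1}$ on a cone, which is not square integrable in $\mathbb{R}^3$, since $\int^\infty r^{-2}r^2\dd r$ diverges. If only $\alpha$ is present, then $\widetilde n_\mu\sim|\textbf{k}|^{-2}$ is in $L^2$ because $\mu>0$ removes the singularity at $0$. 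This gives $\operatorname{Ker}=\langle\widetilde n_\mu\rangle$. Finally, $\dd\textbf{k}=2\pi\sqrt\omega\,\dd\omega\,\dd\sigma$ identifies radial $L^2(\mathbb{R}^3)$ with $L^2(\sqrt\omega\dd\omega)$, and the isotropic $L_\mu$ is the restriction of $\widetilde L_\mu$ to radial functions. So $g\in\operatorname{Ker}(L_\mu)$ if and only if $g=a\,n$.
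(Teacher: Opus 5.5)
Your proposal is correct and follows essentially the same route as the paper: you use the vanishing of the symmetrised Dirichlet form, the parametrisation $\textbf{k}_1=\textbf{k}+\textbf{a}$, $\textbf{k}_2=\textbf{k}+\textbf{b}$ with $\textbf{a}\perp\textbf{b}$, the argument that the Hessian is a constant multiple of the identity for smooth $\Phi$, and then mollification together with closedness of the finite-dimensional space $\operatorname{Span}\{1,\textbf{k},|\textbf{k}|^2\}$ in $L^2_{\text{loc}}$. Your $L^2$ exclusion step, which treats general combinations $\alpha+\beta\cdot\textbf{k}+\gamma|\textbf{k}|^2$ rather than the individual monomials, and your reduction to radial variables are slightly more explicit than the paper's.
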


We gather our findings on the linear operator $L_\mu$ in the following theorem:

\begin{theorem}
\label{MainTh2}
For every $\mu \ge 0$ the operator $-L_\mu $ defined in \eqref{eq: A_mu+K_mu} satisfies the following properties 
\begin{itemize}
    \item[(i)] is symmetric, nonnegative on $L^2(\sqrt \omega \dd\omega )$ and belongs to $\mathscr L(L^2(\sqrt \omega \dd\omega ))$.
    \item[(ii)] Consequently, it  generates a semigroup of contractions on $L^2(\sqrt \omega \dd\omega )$,  $S_t=e^{tL_\mu }$ such that
\begin{align*}
&\|L_\mu (S_tf)\|
 _{ L^2(\sqrt \omega \dd\omega ) }\le 
\frac{\|f\|
 _{ L^2(\sqrt \omega \dd\omega ) }}{e t},\,\,\forall f\in  L^2(\sqrt \omega \dd\omega ),\\
 &\lim_{t\to \infty}\|S_t(I-\Pi)f\|_{ L^2(\sqrt \omega \dd\omega ) }=0,\,\,\forall f\in  L^2(\sqrt \omega \dd\omega ),
  \end{align*}
  where $\Pi = \operatorname{Proj}_{\operatorname{Ker}(L_\mu)}$ is the orthogonal projection onto the $\operatorname{Ker}(L_\mu)$.
  \item[(iii)] For every $\mu \ge 0$, the operator $K_\mu $ defined in \eqref{eq: A_mu+K_mu} is not a compact operator from $L^2(\sqrt \omega \dd\omega )$ into itself. 
  \item[(iv)]\label{Theo:Theo2(iv)} For all $\mu >0$, the linear operator possesses a spectral gap. In particular, the Dirichlet form 
$D_\mu (g,g) = -\langle L_\mu  g, g\rangle_{ L^2(\sqrt \omega \dd \omega) }$  satisfies:
    $$ \exists \lambda_\mu >0;\,\,\, D_{\mu}(g) = - \langle L_\mu g,g \rangle \geq \lambda_\mu \| (I - \Pi)g\|_{L^2(\sqrt{\omega}\dd \omega)}^2, \quad \text{ for all } g \in L^2(\sqrt{\omega}\dd\omega). $$
\end{itemize}
\end{theorem}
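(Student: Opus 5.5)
We treat the four items in order; the Dirichlet form does most of the work. For (i), we symmetrise. Write $h=g/n$, so the bracket in \eqref{eq: A_mu+K_mu} is $-(h+h_3-h_1-h_2)$. The weight $\sqrt\omega\,\Omega=\min(\sqrt\omega,\sqrt{\omega_1},\sqrt{\omega_2},\sqrt{\omega_3})$ is invariant under the exchanges $\omega\leftrightarrow\omega_3$, $\omega_1\leftrightarrow\omega_2$ and $(\omega,\omega_3)\leftrightarrow(\omega_1,\omega_2)$ on the resonant manifold $\omega+\omega_3=\omega_1+\omega_2$. The same holds for the product $n\,n_1n_2n_3$, since $q(n)=0$ means $n_1n_2n_3(n^{-1}+n_3^{-1})=n\,n_3\,n_1n_2(n_1^{-1}+n_2^{-1})$. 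The standard change of variables then gives
$$-\langle L_\mu g,g\rangle_{L^2(\sqrt\omega)}=\tfrac14\iiint \min(\sqrt\omega,\sqrt{\omega_1},\sqrt{\omega_2},\sqrt{\omega_3})\,n n_1n_2n_3\,(h+h_3-h_1-h_2)^2\,\dd\omega\dd\omega_1\dd\omega_2\ge0,$$
and the polarised version gives symmetry. For boundedness, we bound $A_\mu(\omega)=n^{-1}\iint_{D(\omega)}\Omega\, n_1n_2n_3$ explicitly. Using $\Omega\le1$ and $n_i\le(\omega_i+\mu)^{-1}$, a direct computation should show $A_\mu\in L^\infty$ for every $\mu\ge0$; the decay of $n_1n_2n_3$ compensates the factor $n^{-1}=\omega+\mu$. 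For each of the three terms of $K_\mu$ we apply a Schur test with weight $\sqrt\omega$, checking the kernel in $(\omega,\omega')$ after integrating out one variable. This yields $L_\mu\in\mathscr L(L^2(\sqrt\omega\dd\omega))$.

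For (ii), $-L_\mu$ is bounded, self-adjoint and nonnegative, so $S_t=e^{tL_\mu}$ is defined by the spectral theorem and is a contraction. The bound $\sup_{\lambda\ge0}\lambda e^{-t\lambda}=1/(et)$ gives the first estimate. For the second, the spectral measure of $(I-\Pi)f$ has no atom at $\lambda=0$, so $\int e^{-2t\lambda}\dd\nu_f(\lambda)\to0$ by dominated convergence.

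For (iii), we produce a bounded sequence that converges weakly to zero but whose image under $K_\mu$ does not go to zero strongly. Since $A_\mu$ is a multiplication operator with, we expect, a nonzero limit as $\omega\to\infty$, the natural candidates are normalised bumps $g_k$ supported near $\omega\sim k\to\infty$, or translates concentrating at large energies. The terms $g_1/n_1$ and $g_2/n_2$ in $K_\mu$ carry the factor $n_2n_3$ or $n_1n_3$, whose mass sits where $\omega_2,\omega_3$ are small and $\omega_1\approx\omega$. This should give $\|K_\mu g_k\|\gtrsim c>0$ uniformly in $k$, which rules out compactness.

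For (iv), we take $\mu>0$ and argue by contradiction via a Weyl-type decomposition. Since $K_\mu$ is not compact, we instead split it as $K_\mu=K_\mu^{c}+K_\mu^{s}$. Here $K_\mu^c$ is compact (cutoffs to $\epsilon<\omega_i<R$, with Hilbert–Schmidt kernel), and $K_\mu^s$ collects the large-frequency and degenerate regions. The goal is $\|K_\mu^s\|<\inf_\omega A_\mu(\omega)$; note $A_\mu\ge a_0>0$ for $\mu>0$, which we verify from the explicit formula. Then $-L_\mu=(A_\mu-K_\mu^s)-K_\mu^c$ is a positive operator plus a compact one, so its essential spectrum lies in $[a_0-\|K^s_\mu\|,\infty)$. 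Thus $0$ is an isolated eigenvalue, and Proposition \ref{Prop:Kernel_L} identifies its eigenspace as $\langle n\rangle$, which gives $\lambda_\mu>0$. The main obstacle is the smallness of $K_\mu^s$ relative to $a_0$ at large $\omega$: from (iii), the non-compact part is not small per se. If the cutoff approach fails, we would instead take a sequence $g_k\perp n$ with $\|g_k\|=1$ and $D_\mu(g_k)\to0$, and show using the symmetrised form that it cannot escape to $\omega\to\infty$. The idea is to compare $(h+h_3-h_1-h_2)^2$ on configurations with $\omega_2,\omega_3$ small, where the form dominates $\int A_\mu g^2$ minus a controlled part. Compactness on bounded regions then yields a nonzero weak limit in $\operatorname{Ker}L_\mu\cap\langle n\rangle^\perp=\{0\}$, a contradiction.
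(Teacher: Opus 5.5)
\textbf{Main gap: coercivity at high frequencies in (iv).} Your argument for (ii) is fine; the spectral-measure route is a clean alternative to the paper's density argument. Item (iv), however, has a genuine gap exactly at the hard point.

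Your first route aims at $\|K^s_\mu\|<\inf_\omega a_\mu$. But $K^s_\mu$ contains the high-frequency part of $K_\mu$. Since $U_\lambda^{-1}K_\mu U_\lambda=K_{\mu/\lambda}$ for the unitary dilations $(U_\lambda f)(\omega)=\lambda^{-3/4}f(\omega/\lambda)$, that part behaves asymptotically like the scale-invariant operator $K_0$. Nothing you write bounds its norm by $\inf a_\mu$, and a norm bound is the wrong target anyway. What is needed is a form bound $\langle K^s_\mu g,g\rangle\le\langle (A_\mu-\epsilon)g,g\rangle$, i.e.\ coercivity of the limiting $\mu=0$ Dirichlet form on functions living at high frequency.

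Your fallback does not give this either. On configurations with $\omega_2,\omega_3$ small, the weight carries the factor $\min(\sqrt{\omega_2},\sqrt{\omega_3})/\sqrt{\omega}$, so this region accounts for a share of $a_\mu(\omega)$ that vanishes as $\omega\to\infty$. Moreover there $\omega_1\approx\omega$, so for high-frequency $g$ the bracket reduces to $h(\omega)-h(\omega_1)$, which is blind to slowly varying profiles. Indeed $D_\mu(U_\lambda f)=D_{\mu/\lambda}(f)\to D_0(f)$ as $\lambda\to\infty$. So excluding escape to infinity is equivalent to a bound $D_0(f)\gtrsim\|f\|^2$, which can only come from configurations where all four frequencies are comparable.

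\textbf{What the paper supplies.} This is Proposition \ref{Prop:SG_at_infty} and Lemma \ref{lemm:transfer_coercivity}. The paper restricts the form to $(\alpha,\beta)=(\omega_1/\omega,\omega_2/\omega)\in\mathcal R=[R_1,R_2]^2$ and sets $\gamma=\alpha+\beta-1$. It then passes to $\omega=e^x$ with $G(x)=h(e^x)e^{-x/4}$, $h=\omega g$, and uses Plancherel to reduce the claim to
\[
\inf_{\xi\in\RR}\iint_{\mathcal R}\frac{|1+\gamma^{1/4+i\xi}-\alpha^{1/4+i\xi}-\beta^{1/4+i\xi}|^2}{4\alpha\beta\gamma}\,\dd\alpha\,\dd\beta>0 .
\]
This holds for each $\xi$ because $1/4+i\xi$ is never one of the exponents $0,1$ of the invariants $h=1,\omega$. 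It holds uniformly as $|\xi|\to\infty$ by non-stationary phase. The bound is then transferred to general $g$ with a cutoff $\eta((\omega/M)^{\epsilon_M})$ varying on a logarithmic scale, so that $\sup_\omega\omega|\eta_M'(\omega)|\to0$. A cutoff at fixed scale $\eta(\omega/M)$ only gives $\omega|\eta_M'|=O(1)$, so no small commutator error.

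You also leave tightness at $\omega\to0$ unaddressed. The paper gets it from $a_\mu\ge\pi^2/6$, $\|\chi_{(0,M_1)}K_\mu g\|\lesssim_\mu M_1^{3/4}\|g\|$ and $\|L_\mu g\|^2\le\|L_\mu\|D_\mu(g)$.

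\textbf{Item (iii).} The same misreading of where the mass of the kernel sits affects (iii). The region with $\omega_2,\omega_3$ small contributes only $O(k^{-1/2})$ to the kernel near $\omega\sim k$. As a result, unit-width translates $g_k$ actually satisfy $\|K_\mu g_k\|\to0$. You need dilated bumps $U_\lambda f$, together with $K_{\mu/\lambda}f\to K_0f\neq0$ a.e.\ and Fatou's lemma.

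\textbf{Item (i).} The bound $\Omega\le1$ alone does not give $a_\mu\in L^\infty$: it yields $\log^2\omega$ growth for $\mu>0$ and a divergent integral for $\mu=0$. The factor $\min(\sqrt\omega,\sqrt{\omega_1},\sqrt{\omega_2},\sqrt{\omega_3})$ is essential. Once $a_\mu$ is bounded, $0\le D_\mu(g)\le4\langle A_\mu g,g\rangle$ gives boundedness of $L_\mu$ without any Schur test.
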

\begin{remark}[On the functional space] \label{rem:on_functional_space}
    The relevant functional space here is the  $L^2(\sqrt{\omega}\dd\omega)$. 
For a radial function, the change of variables
$\omega=|\textbf{k}|^2$
gives
\begin{equation}
    \|\widetilde g\|_{L^2(\mathbb{R}^3)}^2
    =
    4\pi\int_0^\infty
    |g(r)|^2r^2 \dd r=
    2\pi\int_0^\infty
    |g(\omega)|^2\sqrt{\omega}\dd\omega.
\end{equation}
Thus, up to the constant $2\pi$,
the space $L^2((0,\infty);\sqrt{\omega}\,\dd\omega)
$
corresponds to the radial subspace of $L^2(\mathbb{R}^3)$. 
\end{remark}

\begin{remark} \label{rem:comparison_with_JFA}
Except for the last item, Theorem \ref{MainTh2} holds in the whole range $\mu\geq0$. Compared to the results of our previous paper \cite{EscMenegaki26} where the Dirac measure at zero is a basin of attraction for the linear semigroup, there is no contradiction as 
the two convergences hold in different topologies. The weight
$\sqrt{\omega}$ vanishes at the origin, and therefore condensation at the origin might be invisible in the weighted norm of $L^2((0,\infty);\sqrt{\omega}\dd\omega)$.
%}
\end{remark} 

Then, our main result on the nonlinear problem is the following.
\begin{theorem}
1\label{MainTh}
Let $\mu>0$ and $\lambda_\mu$ the spectral gap found in Theorem \ref{MainTh2}, (iv). For every $0<\delta<\lambda_\mu$, there exists 
$\rho = \rho(\mu,\delta)>0$ such that if 
  $$ g_0 \in (\operatorname{Ker}(L_\mu))^\perp\  \text{ and } \  \|g_0\|_{L^2(\sqrt{\omega})} \leq \rho,$$ then the perturbation equation  admits a unique global solution $g \in C^1( [0,\infty);L^2(\sqrt{\omega}))$ with   $g(0)=g_0$. 
    Moreover, $$g_t \in (\operatorname{Ker}(L_\mu))^\perp \ \text{ for all } t\geq 0 $$ and
    \begin{equation}
        \|g_t\|_{L^2(\sqrt{\omega})} \leq e^{- \delta t} \|g_0\|_{L^2(\sqrt{\omega})} \quad \text{ for all } t\geq 0.
    \end{equation}
\end{theorem}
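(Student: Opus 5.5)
We will prove Theorem \ref{MainTh} by a direct energy argument in $H:=L^2(\sqrt\omega\dd\omega)$. The inputs are the spectral gap of Theorem \ref{MainTh2}(iv) and the bilinear and trilinear estimates announced in the abstract:
\begin{equation*}
\|\Gamma_\mu(g,h)\|_{H}\le C_\mu\|g\|_H\|h\|_H,\qquad \|Q_\mu(g,h,k)\|_H\le C_\mu\|g\|_H\|h\|_H\|k\|_H .
\end{equation*}
These estimates are the main obstacle. They must be proved on the whole frequency range with no ultraviolet cutoff, and $\Omega$ is only bounded by $1$, not integrable in $(\omega_1,\omega_2)$.

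To prove them, we expand $\Gamma_\mu$ and $Q_\mu$ into monomials $n_1n_2n_3 n_i^{-1}g_ag_b(\cdot)$. We pair each monomial with a test function $\varphi\in H$ and estimate the resulting four-fold integral over the resonant manifold, weighted by $\sqrt\omega\,\Omega$, using Cauchy--Schwarz. The decay $n_j\le \min(\mu^{-1},\omega_j^{-1})$ supplies integrability, and the $\min$ structure of $\Omega$ absorbs the weights $\sqrt{\omega_j}$. Terms where $g$ is evaluated at the output variable $\omega$ are handled with an $L^\infty$-type bound $\int_{D(\omega)}\Omega\, n_1n_2n_3 n_i^{-1}|g_j|\le C\|g\|_H$. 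We split into the regions $\omega_j\le 1$ and $\omega_j\ge1$, and use the symmetries $\omega_1\leftrightarrow\omega_2$ and $(\omega,\omega_3)\leftrightarrow(\omega_1,\omega_2)$ to reduce the number of cases. If a pure $H$ bound fails for a particular monomial because of a mismatched weight, the fallback is to exploit the cancellation between paired terms such as $n_3^{-1}-n_2^{-1}=\omega_3-\omega_2$.

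Granting the estimates, the nonlinearity $N(g)=\Gamma_\mu(g,g)+Q_\mu(g,g,g)$ is locally Lipschitz on $H$, and $L_\mu\in\mathscr L(H)$ by Theorem \ref{MainTh2}(i). Hence the right-hand side of \eqref{eq:NL evolution} is a locally Lipschitz vector field on $H$. Picard iteration then gives a unique maximal solution $g\in C^1([0,T^*);H)$, with $T^*<\infty$ only if $\|g_t\|_H$ blows up.

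Next we show that $g_t$ stays in $(\operatorname{Ker}L_\mu)^\perp$. By Proposition \ref{Prop:Kernel_L}, the kernel is spanned by $n$. The quantity $\langle g,n\rangle_H=\int f\sqrt\omega\,\dd\omega-\int n\sqrt\omega\,\dd\omega$ is the perturbation of the total mass, which is formally conserved. Concretely, we check that $\langle \mathcal C(f),1\rangle_{L^2(\sqrt\omega)}=0$ using the symmetry of $\Omega\sqrt\omega$ under the exchanges $(\omega,\omega_3)\leftrightarrow(\omega_1,\omega_2)$, justified by the same integrability bounds as above. Since $L_\mu$ is symmetric with $L_\mu n=0$, we get $\langle L_\mu g,n\rangle=0$, and therefore $\langle N(g),n\rangle=0$ as well. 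Thus $\frac{d}{dt}\langle g_t,n\rangle=0$, and $\Pi g_t=0$.

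Finally, taking the inner product of \eqref{eq:NL evolution} with $g$ and using Theorem \ref{MainTh2}(iv) together with $(I-\Pi)g=g$ gives
\begin{equation*}
\frac12\frac{d}{dt}\|g\|_H^2\le -\lambda_\mu\|g\|_H^2+C_\mu\big(\|g\|_H^3+\|g\|_H^4\big).
\end{equation*}
Choose $\rho$ so that $C_\mu(\rho+\rho^2)\le\lambda_\mu-\delta$. A continuity (bootstrap) argument then shows that while $\|g\|_H\le\rho$ we have $\frac{d}{dt}\|g\|_H^2\le-2\delta\|g\|_H^2$. So $\|g_t\|_H$ is nonincreasing, never exceeds $\rho$, and satisfies $\|g_t\|_H\le e^{-\delta t}\|g_0\|_H$. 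This rules out blow-up, so $T^*=\infty$, and uniqueness follows from the local Lipschitz property.
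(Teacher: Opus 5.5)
Your proposal is correct and follows the paper's argument essentially step by step: local well-posedness by a contraction (Picard) argument with a blow-up alternative, preservation of $g_t\perp n_\mu$ via conservation of $\langle g_t,n_\mu\rangle$ (using $\langle L_\mu g,n_\mu\rangle=0$), and the energy inequality $\frac12\frac{d}{dt}\|g_t\|^2\le\bigl(-\lambda_\mu+C_\mu(\|g_t\|+\|g_t\|^2)\bigr)\|g_t\|^2$ closed by a continuity argument under $C_\mu(\rho+\rho^2)\le\lambda_\mu-\delta$. The one input you only sketch is the paper's Proposition~\ref{prop: control NL}, which it proves by a dyadic decomposition and Young's convolution inequality rather than your test-function scheme; two points in your sketch need sharpening: \begin{itemize} \item the grouping $n_3^{-1}-n_2^{-1}$ in the $g_0g_1$ and $g_0g_2$ terms is necessary, not a fallback, because the ungrouped monomials diverge like $\int^\infty d\omega_2/(\omega_2+\mu)$; \item $\int f\sqrt\omega\,d\omega$ and $\int n_\mu\sqrt\omega\,d\omega$ are each infinite, so the conserved quantity is $\int(f-n_\mu)\sqrt\omega\,d\omega=\langle g,n_\mu\rangle$, and the symmetrization must be done on the grouped linear, quadratic and cubic parts of $q(f)$ after removing $q(n_\mu)=0$. \end{itemize}
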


Theorem \ref{MainTh} may be rephrased as follows in terms of the function $f$.

\begin{corollary}
\label{MainThB} 
Let $\mu>0$ and $0<\delta<\lambda_\mu$. 
For all initial data $f_0$ such that
\begin{align*}
&f_0(\omega )=\frac{1}{\omega + \mu}
\left(1+g_0(\omega) \right),\,\,\qquad \text{ a.e. }\, \omega \ge 0, \text{ where } \\
&g_0\in(\operatorname{Ker}(L_\mu))^\perp \ \text{ and }\quad \|(\mu +\omega )f_0-1\| = \|g_0\|_{L^2(\sqrt \omega \dd \omega )}\le \rho 
\end{align*}
where $\rho$ is as in Theorem \ref{MainTh}, there exists a unique global perturbative solution 
$f \in C^1( [0,\infty);L^2(\sqrt{\omega}\dd \omega))$ such that $f(0)=f_0$. 

For every $t\geq 0$, $(\mu +\omega )f_t-1 = g_t \perp \operatorname{Ker}(L_\mu)$ and
    \begin{equation}
      \|(\mu +\omega )f_t -1\|_{L^2(\sqrt{\omega})}=  \|g_t\|_{L^2(\sqrt{\omega})} \leq e^{- \delta t} \|(\mu +\omega )f_0-1\|_{L^2(\sqrt{\omega})} \quad \text{ for all } t\geq 0.
    \end{equation}
Finally, the dynamics preserve positivity, in the sense that if $f_0\geq 0$ then $f_t\geq 0$ for all $t>0$. 
\end{corollary}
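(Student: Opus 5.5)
The corollary is essentially a translation of Theorem \ref{MainTh} through the change of unknown \eqref{eq:perturbat_setting}, plus a separate positivity statement. I would organise the argument in three steps.

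\textbf{Step 1: translating the data.} Given $f_0$ as in the statement, I set $g_0:=(\omega+\mu)f_0-1$. By hypothesis $g_0\in(\operatorname{Ker}L_\mu)^\perp$ and $\|g_0\|_{L^2(\sqrt\omega\dd\omega)}\le\rho$. Theorem \ref{MainTh} then gives a unique global solution $g\in C^1([0,\infty);L^2(\sqrt\omega))$ of \eqref{eq:NL evolution} with $g(0)=g_0$. This solution satisfies $g_t\perp\operatorname{Ker}(L_\mu)$ for all $t\ge 0$ and $\|g_t\|\le e^{-\delta t}\|g_0\|$.

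\textbf{Step 2: translating the solution.} I define $f_t:=n(1+g_t)$. Since \eqref{eq:NL evolution} was derived from \eqref{eq:NL evolution1} by exactly this substitution, $f$ solves the kinetic equation. The map $g\mapsto n(1+g)$ is affine and injective, so uniqueness of $f$ within the class of perturbative solutions $f=n(1+g)$ follows from uniqueness of $g$. The identity $(\mu+\omega)f_t-1=g_t$, together with the orthogonality and decay of $g_t$, gives the displayed estimate. For regularity, $n\le\mu^{-1}$ is bounded, so multiplication by $n$ is bounded on $L^2(\sqrt\omega)$. Hence $f-n\in C^1([0,\infty);L^2(\sqrt\omega))$. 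One caveat: the constant part $n$ lies in $L^2(\sqrt\omega)$, since $\int\sqrt\omega(\omega+\mu)^{-2}\dd\omega<\infty$. So $f\in C^1([0,\infty);L^2(\sqrt\omega))$ itself.

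\textbf{Step 3: positivity (the main obstacle).} Positivity does not follow from the perturbative theorem, so I would argue directly on the $f$-equation using its gain--loss structure. I write $\mathcal C(f)=-a_f(\omega)f+G_f(\omega)$, where
$$a_f(\omega)=\iint_{D(\omega)}\Omega\,(f_2f_3+f_1f_3-f_1f_2)\dd\omega_1\dd\omega_2, \qquad G_f(\omega)=\iint_{D(\omega)}\Omega\,f_1f_2f_3\dd\omega_1\dd\omega_2 .$$
The gain $G_f$ is nonnegative whenever $f\ge0$. Along the constructed solution, the bilinear estimates that underlie Theorem \ref{MainTh} should give $a_f\in L^\infty_{loc}$ in time with values in a class of multipliers, at least in a form adequate for an ODE-in-$\omega$ argument. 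Then Duhamel's formula
$$f_t=e^{-\int_0^ta_f}f_0+\int_0^te^{-\int_s^ta_f}G_f(s)\dd s$$
suggests the result. To make it rigorous, I would set up a Picard iteration $f^{k+1}$ solving $\partial_tf^{k+1}=-a_{f^k}f^{k+1}+G_{(f^k)_+}$. Each iterate stays nonnegative, and the iteration converges to $f$ in the same norm by the local contraction estimates. Nonnegativity then passes to the limit almost everywhere.

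The delicate point is controlling $a_f$: its sign is indefinite and it may be unbounded, with the $\sqrt\omega$ weight degenerate at $0$. If pointwise bounds are unavailable, I would instead test the equation against $f_-$. Using $f_-\,G_f\le 0$ when the gain is evaluated at the positive part, this gives $\frac{d}{dt}\|f_-\|^2\le C\|f_-\|^2$ with $C$ depending on $\sup_t\|g_t\|$. Gronwall's lemma with $f_-(0)=0$ then yields $f_-\equiv0$.
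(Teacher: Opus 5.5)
Steps 1 and 2 are correct, and they are exactly how the paper obtains the corollary from Theorem~\ref{MainTh}. Your observation that $n_\mu\in L^2(\sqrt\omega\dd\omega)$, so that $f$ itself (not just $f-n_\mu$) lies in $C^1([0,\infty);L^2(\sqrt\omega\dd\omega))$, is a correct detail the paper leaves implicit.

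The gap is in Step 3: neither positivity argument closes as written. Your primary route has no support in what has been proved:
\begin{itemize}
\item The contraction in Theorem~\ref{Theo: LWP} is for $\Phi(g)=g_0+\int_0^t\mathcal F_\mu(g(s))\dd s$ in the relative norm. It is not for your scheme $\partial_t f^{k+1}=-a_{f^k}f^{k+1}+G_{(f^k)_+}$, whose fixed point solves the different equation $\partial_t f=-a_f f+G_{f_+}$.
\item In the relative norm, the splitting $\mathcal C=-a_f f+G_f$ destroys the cancellation the whole theory relies on. Already $n_\mu^{-1}G_{n_\mu}=a_\mu\ge\pi^2/6$, which is not in $L^2(\sqrt\omega\dd\omega)$.
\end{itemize}
Convergence of the iteration would therefore need new multiplier and Lipschitz bounds on $a_f$ and $G_{f_+}$, which you do not provide.

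Your fallback is the right idea and is essentially the paper's, but it fails as stated for two reasons. First, it must be run on a weighted functional. All available bounds (boundedness of $L_\mu$ and Proposition~\ref{prop: control NL}) concern $\mathcal F_\mu(g)=n_\mu^{-1}\mathcal C(n_\mu(1+g))$ in the relative variable. So the remainder $\mathcal C(f)-\mathcal C(f_+)$ is controlled by $\|(\omega+\mu)f_-\|_{L^2(\sqrt\omega)}$, not by $\|f_-\|_{L^2(\sqrt\omega)}$, and $\frac{d}{dt}\|f_-\|^2\le C\|f_-\|^2$ does not follow from anything available.

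The paper instead applies Gronwall to $\mathcal E(t)=\frac12\|(1+g_t)^-\|^2_{L^2(\sqrt\omega)}$; equivalently, it tests the $f$-equation against $(\omega+\mu)^2f_-$. It splits $\mathcal F_\mu(g)=[\mathcal F_\mu(g)-\mathcal F_\mu(\bar g)]+\mathcal F_\mu(\bar g)$ with $\bar g=\max(g,-1)$, which is the relative version of your $f_+$. Since $|\bar g|\le|g|$, the local Lipschitz constant on the ball of radius $\sup_t\|g_t\|$ applies, with $\|g-\bar g\|=\|(1+g)^-\|$.

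Second, the sign step is not justified. The inequality $\langle\mathcal F_\mu(\bar g),(1+g)^-\rangle\ge0$ is not automatic. On $\{g<-1\}$ the three terms of $q(n_\mu(1+\bar g))$ containing $f(\omega)$ vanish, but the individual loss integrals diverge on Rayleigh--Jeans tails. Lemma~\ref{lemm:positiv_F_mu} therefore proves it for frequency-truncated operators $\mathcal F_{\mu,N}$, then passes to $N\to\infty$ using density and Lipschitz bounds uniform in $N$.

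With the weighted functional, the comparison function $\bar g$, and the truncated sign lemma, your fallback becomes the paper's proof.
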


\subsection{Difficulties and strategy of the proof}

The Rayleigh-Jeans equilibria, even though fundamental thermodynamic equilibria of wave kinetic equations and analogous to the Maxwellians in
the parallel kinetic theory of particles, have not been studied
sufficiently in the wave setting up to date, and basic properties on the whole frequency space, such as
their stability have remained open and, in fact, doubtful. Thus the perturbative theory around RJ is considerably less developed than the corresponding theory around Maxwellians. Maxwellians
are the entropy maximizers of the classical Boltzmann equation for gases under fixed mass, momentum and energy, and a mature global perturbative theory
around them has been established for several decades (cf. \cite{cercignani1988}, \cite{Villani_review} and literature therein.) 
The particularly difficult feature of Rayleigh-Jeans equilibria compared with Maxwellian states is their heavy tails and lack of integrability. We explain some of these difficulties below.

The equation \eqref{eq:NL evolution1} formally preserves the total wave action and energy of the system of waves, namely
$$
N(f)=\int _0^\infty f(t, \omega )  \omega^{1/2} \dd\omega;\,\,\, 
E(f)=\int _0^\infty f(t, \omega ) \omega^{3/2} \dd\omega.
$$ 
Both quantities are infinite for the RJ spectra. Furthermore, it is easy to check that for $f=n_\mu$, the integral term of  equation \eqref{eq:NL evolution1} diverges if the function $q(f)$ is replaced by any of the four terms $f f_2 f_3, f f_1 f_3,  f_1 f_2 f ,  f_1 f_2 f_3 $ or even by $ f f_2 f_3 + f f_1 f_3 $ or $ f_1 f_2 f + f_1 f_2 f_3$. The identity
$\mathcal{C}(n_\mu)=0$ must therefore be understood via the exact detailed-balance cancellation and not as the difference of two individually finite quantities.

The ultraviolet (high-frequency) divergence of the RJ distributions also has
a natural interpretation from quantum kinetic theory. This arises on physicist's arguments from the very origin of the wave kinetic operator when seen as the formal classical approximation of the bosonic Nordheim collision operator, true only in the high-occupation regime: $n(k)\gg 1$. Moreover, the Bose-Einstein equilibrium reduces to the Rayleigh-Jeans law when when the occupation numbers are large. But $n(k)$ decreases at high frequencies $k$, and thus the classical approximation stops being valid in that regime.

Thus the status of the RJ spectra as solutions of \eqref{eq:NL evolution1} may seem delicate.  
It is thus natural to ask whether solutions of \eqref{eq:NL evolution1} still exist for initial data given by perturbations of a Rayleigh-Jeans spectrum, for which perturbations, and in which topology the resulting dynamics is stable.
\vspace{-0.10cm}

\subsubsection{Strategy of the proof}

To investigate this problem of stability of RJ, we work in the natural-for-dissipation functional space $L^2((0,\infty);\sqrt{\omega}\dd\omega)$ This is good for two main reasons: 1) the equilibrium $n_\mu$ belongs in this space for $\mu>0$ and 2)  
the symmetrized Dirichlet form
of the linearised collision operator is naturally defined in this space, under radially symmetry (cf Remark \ref{rem:on_functional_space}).

We first prove that the linearised collision operator around each non-singular Rayleigh-Jeans thermodynamic equilibrium has a spectral gap in the relevant
weighted $L^2$-space. This proof as explained also in subsection \ref{subsec:NoCompact}, does not follow by the classical Grad
compact perturbation argument for angular-cutoff Boltzmann
operators. The reason is that, when setting the equation on the whole frequency space, and splitting the collisional operator as  $L_\mu= -A_\mu + K_\mu$, the operator $K_\mu$ is not compact. We prove this, by a scaling dilation argument, in Proposition \ref{prop:noncompactness}.

We nevertheless prove a Poincaré inequality for the linear operator $L_\mu$ and thus the existence of a spectral gap by combining a local compactness for $K_\mu$, coercivity properties at low and at high frequencies and a contradiction argument. In particular,
\begin{itemize}
    \item We prove that the multiplication part $A_\mu$ is lower bounded away from zero (cf Lemma \ref{lemm:A_mu}). 
    \item At low frequencies, for $\mu>0$, we show that a normalised sequence with Dirichlet form going to $0$, cannot concentrate close to $0$. More generally that  (cf Lemma \ref{lemm:Tightness_zero})
    $$\|\chi_{(0,M_1)}g\|_{L^2(\sqrt{\omega}\dd\omega)}^2
\lesssim_\mu -\langle L_\mu g,g \rangle + M_1^{3/2}\|g\|_{L^2(\sqrt{\omega}\dd\omega)}^2,
 \quad M_1 \downarrow 0.
$$
    \item The local operator $\chi_{[M_1,M_2]}K_\mu\chi_{[M_1,M_2]} $ is compact for $0<M_1,M_2<\infty$ (cf Lemma \ref{lem:local_K_mu_compact}).
    \item Finally we deal with the delicate high frequencies by providing an estimate of the form (cf Prop. \ref{Prop:SG_at_infty}) $$ \|\chi_{(M_2, \infty)}g\|_{L^2(\sqrt{\omega})}^2 \lesssim -\langle L_\mu g,g \rangle + \log(M_2)^{-2} \|g\|_{L^2(\sqrt{\omega})}^2, \quad M_2 \uparrow \infty, $$
    showing that a normalised sequence with Dirichlet form going to $0$, cannot concentrate close at infinity. 
    \item We then close with a contradiction argument. 
\end{itemize}

For the nonlinear problem, we first prove, by dyadic decomposition of the frequency space, the bilinear and trilinear bounds for $\mu>0$ (cf Prop. \ref{prop: control NL}):
$$ \|\Gamma_\mu(g,h)\|_{L^2(\sqrt \omega )}
    \lesssim_\mu \|g\|_{L^2(\sqrt \omega)}\|h\|_{L^2(\sqrt \omega)}, \quad  \|Q_\mu(g,h,\ell)\|_{L^2(\sqrt \omega)}
    \lesssim_\mu
    \|g\|_{L^2(\sqrt \omega)}\|h\|_{L^2(\sqrt \omega)}\|\ell\|_{L^2(\sqrt \omega)}.
$$
These estimates yield a local well-posedness by a fixed point argument.  
To obtain the nonlinear global-in-time well-posedness, we exploit the existence of a spectral gap that yields  exponential relaxation for sufficiently small perturbations of the equilibria. 

\subsubsection{Relation to previous results}

Formally the stability of RJ equilibria for $\mu =0$ in the whole space is considered in \cite{ZakLvovFalkov} for a large set of kinetic wave equations, but their results do not apply to the 3D cubic Schr\"odinger equation.

For Schr\"odinger waves on a finite box $B \subset \R^3$ it is proved in \cite{Men23} that the nonsingular RJ states (with $\mu >0$) are nonlinearly asymptotically stable under perturbations in $L^2(B)$. 
This frequency cutoff makes the $K_\mu$ part of the linearized operator immediately compact, and the spectral gap follows from a standard compact perturbation argument. Here we remove this assumption, lose the compactness but nevertheless prove via a more delicate analysis that the linear operator possesses a spectral gap, as explained above.

Rigorously, the case $\mu=0$ of singular RJ, for the equation \eqref{eq:NL evolution1} in the whole frequency space is considered in our previous work \cite{EscMenegaki26}. 
There, for suitable nonnegative initial data which agree with the singular RJ profile near the origin and are truncated at high frequencies, we prove the formation of a Dirac mass at zero in finite time. We also solve the initial value problem for the equation linearised around the singular RJ in several weighted 
functional spaces. For a class of nonnegative initial data, the solution converges weakly, as time goes to infinity, to a multiple of the Dirac mass at the origin. 
This can coincide with the findings of this paper, there  is no contradiction as the two behaviours concern different topologies. See also the discussion in Remark \ref{rem:comparison_with_JFA}.  

Insightful results on the local in time well-posedness of generalisations of equation \eqref{eq:NL evolution1} are also available.  
It was established in \cite{GIT20} for general radial dispersion relations in nearly critical weighted spaces, while in the Schr\"{o}dinger case, considered here, the authors prove local well-posedness in $L^2_s$ spaces with $s>1/2$. More recently the local theory was extended in \cite{IoakeimLeger26} to almost critical weighted $L^p$ spaces, for all $2\leq p\leq \infty$.
The initial data required in these papers need to decay faster at infinity than RJ. In particular, the RJ profiles lie at the excluded critical point of
these theories. So, these results do not provide a local well-posedness theory for the relative
$L^2(\sqrt{\omega}\dd\omega)$-neighbourhoods of RJ as  considered here.
 Interestingly, in \cite{Ampatzoglou_ill_posed} the authors provide a sharp ill-posedness/well-posedness threshold in terms of the collisional kernel of the operator, in weighted $L^\infty$ spaces. (The cubic
Schr\"{o}dinger WKE corresponds to the well-posed case).

Let us also mention the recent numerical work on the two-dimensional nonlinear Schr\"odinger model \cite{Laurie26}, where the long-time evolution exhibits an approximately Rayleigh-Jeans region $\left\{{T(t)}/(\omega+\mu(t) )\right\}_{\{t>0\}}$ over an increasingly broad range of frequencies, and a self similar (high-frequency) tail.
Also, formally the stability of RJ equilibria for $\mu =0$ in the whole space has been considered in \cite{ZakLvovFalkov} for a large set of kinetic wave equations, but their results do not apply to the 3D cubic Schr\"odinger equation.

Global weak solutions were constructed for the wave kinetic equation for a large class of initial measures with total finite mass, and their asymptotic behavior studied in \cite{EV13}.

For the spatially inhomogeneous wave kinetic equation, global well-posedness is known and long-time asymptotics have been obtained exploiting the dispersion introduced by the transport term. First,  in \cite{Ioakeim22} global existence and stability of mild solutions was proven near vacuum. Later this was extended to polynomially decaying initial data in \cite{AmpMillPavlovTaskov} and in \cite{AmpatzoglouLeger25} it was improved for translation invariant spaces in the spatial variable.

The stability of the nonequilibrium Kolomogorov-Zakharov steady states in $L^\infty$ for the inverse mass cascade has also been proved in \cite{CDG22} in the stationary setting. Regarding such non-equilibirum solutions, in \cite{SofferTran2020} time-dependent energy cascade solutions have been constructed rigorously for related three wave kinetic equations.

Regarding the analogous mathematical kinetic theory of phonons, the global stability of the nonsingular RJ spectra for the homogeneous kinetic FPUT equation, where the frequencies rather live on the torus and where there is no linear spectral gap, has been studied in \cite{GerLaMen26, GerLaMen26review}. For the inhomogeneous phonon Boltzmann, existence of solutions near vacuum for long times has been proved in \cite{Xiang}.

Finally, very recently the existence of local strong solutions in $L^1(\RR^d)$, $\dd\ge 2$ to the gravity water kinetic equation for initial data in a suitably weighted $L^2\cap L^\infty $ space has been proved in \cite{pan2026}.

 \subsection{Organisation of the article}
 Some first properties of the linear operator $L_\mu $ (boundedness, lack of compactness, collisional invariants and a time decay of the corresponding semigroup) are proved in Section \ref{sec:properties of L}. 
 
 In Section \ref{sec:LWP}, we control in $L^2(\sqrt \omega \dd\omega )$ the cubic and quadratic nonlinearities and  prove the local existence and uniqueness of solutions of the nonlinear equation \eqref{eq:NL evolution1} for initial data given by small perturbations 
 in $L^2(\sqrt \omega \dd\omega )$ of nonsingular RJ spectra. 
 
 In Section \ref{sec:SG} we prove the existence of a spectral gap for the linear operator $L_\mu$. 
 
 Finally in Section \ref{sec:GWP}, we deduce exponential decay in time of the linear semigroup and consequently the global existence of the solutions of equation \eqref{eq:NL evolution1} obtained in Section \ref{sec:LWP}.

 \subsection{Notation}
 We write $A \lesssim B$ if there exists a numerical constant $C$ so that $A\leq CB$, as well as $A\lesssim_\mu B$ if this constant depends on $\mu$. Also $A\sim B$ if $A\lesssim B$ and $B\lesssim A$.
 
 For a Banach space $X$, we denote by $\mathscr L(X)$, the space of bounded linear operators from $X$ into itself, endowed with the operator norm.

 Whenever we write $\langle \cdot, \cdot \rangle$, we mean the inner product with respect to the $L^2(\sqrt{\omega} \dd \omega)$ space, unless otherwise specified. 
 
Throughout the manuscript, we use tildes to denote functions and operators written in the frequency-vector variable $\textbf{k} \in \mathbb{R}^3$. Thus, $\widetilde g$ and $\widetilde n_\mu$ denote functions of $\textbf{k}$, while $g$ and $n_\mu$ denote their counterparts in the radial energy variable $\omega=|\textbf{k}|^2$. 

We use the same convention for operators: for instance, $\widetilde L_\mu$ and $\widetilde Q_\mu$ act on functions of $\textbf{k}$, whereas $L_\mu$ and $Q_\mu$ act on functions of $\omega$.

Also, we denote by $\Pi_\mu$ (or simply by $\Pi$) the $L^2(\sqrt{\omega}\dd \omega)$-orthogonal projection onto $\operatorname{Ker}(L_\mu)$.

\subsection{Acknowledgments} Angeliki Menegaki acknowledges support from the Engineering and Physical Sciences Research Council fellowship with reference UKRI2025. Miguel Escobedo is supported by MINECO through grant PID2023-146872OB-I00.
We thank Pierre Germain and Clément Mouhot for useful discussions.  

\noindent
\textbf{AI use statement.}
AI tools were used for proofreading and for supplying the proof of Proposition 4.3, in particular for the idea of transforming the problem to the exponential variables.

\section{Properties of the linear operator $L_\mu$} \label{sec:properties of L}

\subsection{Boundedness of $L_\mu$ and lack of compactness in $L^2(\sqrt \omega d\omega )$ } \label{subsec:Boundedness_NoComp_KerL}

We work in the space $L^2((0, \infty), \sqrt{\omega} \dd \omega)$ as this is the natural space when it comes to energy dissipation. We easily see that the operator is self-adjoint in that space and, using the symmetry under permutations of the four collision variables, we see that $L_\mu$ also has a sign. Indeed for $g \in C_c^\infty(0, \infty)$: 
\begin{equation}
\begin{split}
   \mathcal{D}_{\mu}(g,g) &:= - \langle L_\mu g, g \rangle_{L^2(\sqrt{\omega} \dd \omega)} \\
  - \langle L_\mu g, g \rangle_{L^2(\sqrt{\omega} d \omega)} & = \frac{1}{4} \iiint_{\omega_1+\omega_2\geq \omega} \min (\sqrt{\omega_1}, \sqrt{\omega_2}, \sqrt{\omega_3}, \sqrt{\omega}) \prod_{\ell=0}^3 n(\omega_\ell) \times 
    \\
    & \times \left[- \frac{g(\omega_1)}{n (\omega_1)} - \frac{g(\omega_2)}{n(\omega_2)} + \frac{g(\omega_3)}{n (\omega_3)} + \frac{g(\omega)}{n(\omega_0)} \right]^2 \dd \omega \dd \omega_1 \dd \omega_2 \geq 0, 
\end{split}
    \end{equation}
where the notation is that $\omega_0=\omega$. 

\subsubsection{Boundedness of $L_\mu$} \label{subsec:BoundedL_mu}
In this subsection, we prove that the linear operator is bounded as an operator from $L^2(\sqrt{\omega} \dd \omega)$ to itself, for any $\mu\geq 0$: that is regardless of whether we linearise around singular or not Rayleigh-Jeans equilibria.   

\begin{proposition}
    Let $\mu \geq 0$. The operator $L_\mu$ is bounded and symmetric in $L^2((0, \infty); \sqrt{\omega} \dd \omega)$. In particular, there exists $C_\mu>0$ so that  
    $$
\|L_\mu g\|_{L^2((0, \infty); \sqrt{\omega} \dd \omega) }\leq C_\mu\|g\|_{L^2((0, \infty); \sqrt{\omega} \dd \omega)},
\quad  \text{for all } g\in L^2((0, \infty), \sqrt{\omega} \dd \omega).$$
\end{proposition}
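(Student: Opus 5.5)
The plan is to avoid estimating the three integral terms of $K_\mu$ one by one, and instead to control the quadratic form $\mathcal D_\mu(g,g)=-\langle L_\mu g,g\rangle$ by the multiplication part alone. Boundedness of $L_\mu$ then follows from the fact that a symmetric, nonnegative, bounded form comes from a bounded operator.

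\textbf{Step 1: the multiplier is bounded.} Write $A_\mu g=a_\mu(\omega)g(\omega)$ with
$$a_\mu(\omega)=\frac1{n(\omega)}\iint_{D(\omega)}\Omega(\omega,\omega_1,\omega_2,\omega_3)\,n_1n_2n_3\dd\omega_1\dd\omega_2 .$$
I will show that $a_\mu\in L^\infty(0,\infty)$.

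\emph{The case $\mu=0$.} Here $\Omega$ is homogeneous of degree $0$, $n_1n_2n_3$ of degree $-3$, the measure of degree $2$, and $n^{-1}=\omega$ of degree $1$. Hence $a_0(\lambda\omega)=a_0(\omega)$, so $a_0$ is constant and it suffices to check $a_0(1)<\infty$. I would split $D(1)$ into three regions.
\begin{itemize}
\item Far away ($\omega_1+\omega_2\to\infty$): use $\Omega\le 1$. The integrand is $O\big(\omega_1^{-1}\omega_2^{-1}(\omega_1+\omega_2-1)^{-1}\big)$. This has total degree $-3$ in two dimensions, so it is integrable at infinity; near the axes inside this region one uses the next bullet.
\item Near $\omega_1=0$ or $\omega_2=0$: the singular factor $n_i=1/\omega_i$ is compensated by $\Omega\le\sqrt{\omega_i}$, leaving $\omega_i^{-1/2}$, which is integrable.
\item Near the edge $\omega_3=\omega_1+\omega_2-1=0$: the same compensation $\Omega\le\sqrt{\omega_3}$ applies.
\end{itemize}

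\emph{The case $\mu>0$.} Use $n_\mu\le n_0$ and $n_\mu\le\mu^{-1}$. For $\omega\ge\mu$ we have $\omega+\mu\le 2\omega$, which gives $a_\mu(\omega)\le 2a_0$. For $\omega\le\mu$ we bound $n^{-1}\le 2\mu$, and the remaining integral is finite by the same three-region analysis with $n_\mu\le n_0$.

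\textbf{Step 2: comparison of the Dirichlet form with $A_\mu$.} For $g\in C_c^\infty(0,\infty)$, start from the symmetrized expression for $\mathcal D_\mu$ displayed above and apply $(x_1+x_2+x_3+x_4)^2\le 4\sum_i x_i^2$.

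The measure $\min(\sqrt{\omega},\sqrt{\omega_1},\sqrt{\omega_2},\sqrt{\omega_3})\prod_\ell n_\ell\,\dd\omega\dd\omega_1\dd\omega_2$ on $\{\omega_1+\omega_2=\omega+\omega_3\}$ is invariant under permutations of the four variables. The same invariance is what produced the symmetrized form; it should be checked via the changes of variables $\omega\leftrightarrow\omega_1$, $\omega\leftrightarrow\omega_3$, etc. By this invariance, each of the four squared terms equals
$$\iiint \min(\cdots)\prod_\ell n_\ell\,\frac{g(\omega)^2}{n(\omega)^2}=\int_0^\infty \sqrt\omega\, a_\mu(\omega)\,g(\omega)^2\dd\omega .$$
This gives
$$0\le\mathcal D_\mu(g,g)\le 4\|a_\mu\|_\infty\|g\|^2_{L^2(\sqrt\omega)} .$$
All integrals are finite for compactly supported $g$, by Step 1.

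\textbf{Step 3: from the form to the operator.} Symmetry of $L_\mu$ on $C_c^\infty$ follows from the same symmetrized (bilinear) formula. Since $\mathcal D_\mu$ is a nonnegative symmetric bilinear form, Cauchy–Schwarz gives
$$|\langle L_\mu g,h\rangle|^2\le\mathcal D_\mu(g,g)\,\mathcal D_\mu(h,h)\le 16\|a_\mu\|_\infty^2\|g\|^2\|h\|^2 .$$
Taking the supremum over $h$ shows $\|L_\mu g\|\le C_\mu\|g\|$ on the dense set $C_c^\infty$, with $C_\mu=4\|a_\mu\|_\infty$. The operator then extends by density, and the kernel-integral definition agrees with the extension.

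\textbf{Main obstacle.} I expect the difficulty to be Step 1, namely the careful local analysis showing $a_0(1)<\infty$ near the corners where two of $\omega_1,\omega_2,\omega_3$ vanish simultaneously. Together with this, one must rigorously justify the permutation-invariance changes of variables with the weight $\sqrt\omega\,\Omega=\min(\cdot)$. I would handle the corners by bounding the $\min$ by the geometric mean of the two small square roots.
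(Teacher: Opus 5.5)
Your argument follows the paper's proof. The quantity the paper bounds uniformly in \eqref{eq: unif bound} is exactly your multiplier $a_\mu(\omega)=\frac{\omega+\mu}{\sqrt{\omega}}\iint_{D(\omega)}\min(\sqrt{\omega},\sqrt{\omega_1},\sqrt{\omega_2},\sqrt{\omega_3})\,n_1n_2n_3\dd\omega_1\dd\omega_2$. The paper likewise bounds the square by four times the sum of squares and uses the symmetry of the collision measure to reduce all four terms to $\int_0^\infty a_\mu g^2\sqrt{\omega}\dd\omega$. It then extends the bounded form by density, using polarization plus Riesz where you use Cauchy--Schwarz for the nonnegative form; both are fine.

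Your comparison $a_\mu\le 2a_0$ for $\omega\ge\mu$ is a cleaner way of doing what the paper's second rescaling does. Your bound $\min(\sqrt{\omega_1},\sqrt{\omega_3})\le(\omega_1\omega_3)^{1/4}$ handles the corners faster than the paper's case split. One small precision: the measure is not invariant under every permutation. For example, $\omega\leftrightarrow\omega_1$ alone does not preserve $\omega+\omega_3=\omega_1+\omega_2$. It is invariant under the order-$8$ group generated by $\omega\leftrightarrow\omega_3$, $\omega_1\leftrightarrow\omega_2$ and $(\omega,\omega_3)\leftrightarrow(\omega_1,\omega_2)$. This group is transitive on the four slots, which is all Step~2 needs.

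The step that fails as written is the case $\mu>0$, $\omega\le\mu$. Step~1 must give $\sup_{0<\omega\le\mu}a_\mu(\omega)<\infty$, not mere finiteness for each $\omega$. The comparison $n_\mu\le n_0$ destroys this uniformity. By the same homogeneity you used for $\mu=0$, $\iint_{D(\omega)}\Omega\,n_{0,1}n_{0,2}n_{0,3}\dd\omega_1\dd\omega_2=a_0/\omega$, so you only obtain $a_\mu(\omega)\le 2\mu\,a_0/\omega$, which blows up as $\omega\downarrow0$. The comparison is too lossy at frequencies $\omega_i\ll\mu$, where $n_0(\omega_i)=\omega_i^{-1}\gg\mu^{-1}\ge n_\mu(\omega_i)$.

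In this regime you have to use the boundedness of $n_\mu$ instead, which is what the paper does. Use $\Omega\le1$, and note that $\omega\le\mu$ and $\omega_1+\omega_2\ge\omega$ give $\omega_1+\omega_2-\omega+\mu\ge\frac12(\omega_1+\omega_2+\mu)$. Then
\[
a_\mu(\omega)\le 4\mu\iint_{(0,\infty)^2}\frac{\dd\omega_1\dd\omega_2}{(\omega_1+\mu)(\omega_2+\mu)(\omega_1+\omega_2+\mu)}=4\iint_{(0,\infty)^2}\frac{\dd y\dd z}{(1+y)(1+z)(1+y+z)}=\frac{2\pi^2}{3}
\]
for all $0<\omega\le\mu$. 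With this replacement Step~1, and hence your whole proof, goes through. The point that actually needs care is this uniformity as $\omega\downarrow0$ for $\mu>0$, not the corner singularities at $\mu=0$ that you flagged as the main obstacle.
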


\begin{proof}
First we assume that $g \in C_c^\infty(0,\infty)$, so that the symmetrisation procedure above is well-justified (Fubini is allowed). We will in the end extend the operator $L_\mu$ uniquely to a bounded self-adjoint operator on $L^2(\sqrt{\omega})$. 

    As a first step, we bound the square in the integrand as follows: 
  \begin{align*}
  \Bigg[- \frac{g(\omega_1)}{n(\omega_1)} - \frac{g(\omega_2)}{n(\omega_2)} + &\frac{g(\omega_3)}{n(\omega_3)} + \frac{g(\omega_0)}{n(\omega_0)} \Bigg]^2  \leq \\
 &\leq   4 \left( 
    \left\vert \frac{g(\omega_0)}{n(\omega_0)}\right\vert^2 + \left\vert \frac{g(\omega_1)}{n(\omega_1)}\right\vert^2 + \left\vert \frac{g(\omega_2)}{n(\omega_2)}\right\vert^2+ \left\vert \frac{g(\omega_3)}{n(\omega_3)}\right\vert^2 \right)  
    \end{align*}
     which implies that 
  \begin{align*}
  &  \langle L_\mu g, g \rangle_{L^2(\sqrt{\omega} \dd \omega)} \leq \\
    &\leq \sum_{\ell=0}^3 
    \iiint_{\omega_1+\omega_2\geq \omega} \min (\sqrt{\omega_1}, \sqrt{\omega_2}, \sqrt{\omega_3}, \sqrt{\omega_0}) \prod_{j=0}^3 n(\omega_j) \left\vert \frac{g(\omega_\ell)}{n(\omega_\ell)}\right\vert^2 \dd \omega_0 \dd \omega_1 \dd \omega_2 
    \end{align*}
    and using the symmetry in all four variables of the integral kernel, it suffices to bound the first term 
    \begin{equation}
\begin{split}
I_0 &:=  \iiint_{\omega_1+\omega_2\geq \omega} \min (\sqrt{\omega_1}, \sqrt{\omega_2}, \sqrt{\omega_3}, \sqrt{\omega_0}) \prod_{\ell=0}^3 n(\omega_\ell) \left\vert \frac{g(\omega_0)}{n(\omega_0)}\right\vert^2 \dd \omega_0 \dd \omega_1 \dd \omega_2 \\
&  = 
\int_0^\infty \frac{\omega_0 +\mu}{\sqrt{\omega_0}} |g(\omega_0)|^2\times \\
&\hskip 0.6cm \times  \left(\iint_{\omega_1+\omega_2 \geq \omega_0} \min (\sqrt{\omega_1}, \sqrt{\omega_2}, \sqrt{\omega_3}, \sqrt{\omega_0}) \prod_{\ell=1}^3 n(\omega_\ell) \dd \omega_1 \dd \omega_2\right) \sqrt{\omega_0}\dd \omega_0. 
    \end{split}
    \end{equation}
    We are then going to show that in fact the interior double integral is uniformly bounded in $\omega_0>0$, i.e. that 
    \begin{align} \label{eq: unif bound}
    \sup_{\omega_0>0}\ \frac{\omega_0 +\mu}{\sqrt{\omega_0}} 
 \iint_{\omega_1+\omega_2 \geq \omega_0} \min (\sqrt{\omega_1}, \sqrt{\omega_2}, \sqrt{\omega_3}, \sqrt{\omega_0}) \prod_{\ell=1}^3 n(\omega_\ell) \dd \omega_1 \dd \omega_2 \leq C_\mu <\infty
    \end{align}
    for some finite constant $C_\mu$. Then, given \eqref{eq: unif bound},  we are ready to conclude since for $g \in C_c^\infty(0,\infty)$:  
    $$I_0 \leq C_\mu \|g\|_{L^2((0, \infty), \sqrt{\omega} \dd \omega)}^2, $$
    which by symmetry implies that  
    $$ \langle L_\mu g, g \rangle_{L^2(\sqrt{\omega} \dd \omega)} \leq \tilde{C}_\mu \|g\|_{L^2((0, \infty), \sqrt{\omega} \dd \omega)}^2.$$
    Then polarisation gives  that for $g, h \in C_c^\infty (0,\infty)$ we have 
    $$ \vert \langle L_\mu g, h \rangle_{L^2(\sqrt{\omega} \dd \omega)} \vert
    \leq \tilde{C}_\mu \|g\|_{L^2(\sqrt{\omega} \dd \omega)} \|h\|_{L^2(\sqrt{\omega} \dd \omega)}. $$
    Since $C_c^\infty(0,\infty)$ is dense in $L^2(\sqrt{\omega} d \omega)$, the bilinear form $\mathcal{D}_\mu(g,h)$ extends uniquely by density to a bounded bilinear form on all of $L^2(\sqrt{\omega})\times L^2(\sqrt{\omega})$. So by Riesz representation, there exists a unique bounded operator still denoted by $L_\mu$ so that 
    $$ \mathcal{D}_\mu(g,h) = \langle L_\mu g, h \rangle_{L^2(\sqrt{\omega} \dd \omega)} \quad \text{ for all } g,h \in L^2(\sqrt{\omega} \dd \omega).$$
    
    Also there holds 
    $$ \|L_\mu g \|_{L^2(\sqrt{\omega}\dd \omega)} \leq \tilde{C}_\mu \|g\|_{L^2((0, \infty), \sqrt{\omega} \dd \omega)}.$$
    
    In the rest of the proof we show the \eqref{eq: unif bound}:

    \noindent
    \underline{\textbf{Case 1: Let $\mu>0$.}} We notice first that without loss of generality we may assume that $\mu=1$. Indeed otherwise we can rescale $\omega = \mu x, \omega_1 = \mu y, \omega_2= \mu z $ which gives that 
    \begin{align*}
        &\frac{\omega_0 +\mu}{\sqrt{\omega_0}} 
 \iint_{\omega_1+\omega_2 \geq \omega_0} \frac{\min (\sqrt{\omega_1}, \sqrt{\omega_2}, \sqrt{\omega_3}, \sqrt{\omega_0})}{(\omega_1+\mu)(\omega_2+\mu)(\omega_3+\mu)} \dd  \omega_1 \dd \omega_2  =\\
 & \hspace{5cm}
 \frac{x+1}{\sqrt{x}} 
 \iint_{y+z \geq x} \frac{\min (\sqrt{y}, \sqrt{z}, \sqrt{y+z-x}, \sqrt{x})}{(y+1)(z+1)(y+z-x+1)} \dd y \dd z. 
    \end{align*}  
    \underline{Now if $x\geq 1$:}  We set $$
y=xa,\qquad z=xb,\qquad c:=a+b-1.$$
Then $a,b>0$, $c>0$,  and
$ y+z-x=xc$, $dydz=x^2\ da db.$ Moreover,
$$
\min(\sqrt{x},\sqrt y,\sqrt z,\sqrt{y+z-x})
=
\sqrt{x}\ \min(1,\sqrt a,\sqrt b,\sqrt c),
$$
and the denominator becomes  
$$(1+y)(1+z)(1+y+z-x)
=
x^3(a+x^{-1})(b+x^{-1})(c+x^{-1}).$$
Thus the quantity we want to estimate is written as 
$$(1+x^{-1})
\iint_{\substack{a,b>0\\ a+b>1}}
\frac{
\min(1,\sqrt a,\sqrt b,\sqrt{a+b-1})
}
{(a+x^{-1})(b+x^{-1})(a+b-1+x^{-1})}
\,\dd a\,\dd b.
$$
Since now $x> 1$, we have $1+x^{-1}\le 2$, and the denominator is  lower bounded by  
$ab(a+b-1).$ Altogether give the upper bound 
$$
2
\iint_{\substack{a,b>0\\ a+b>1}}
\frac{
\min(1,\sqrt a,\sqrt b,\sqrt{a+b-1})
}
{ab(a+b-1)}
\,\dd a\,\dd b.
$$
This is now finite. Indeed, 
the only possible singularities occur at $a=0$, $b=0$, or $c=0$. When  $a=0$ and  $c=0$, we have  $b=1+c-a\sim 1$, and the integrand is upper-bounded by
$$
C\ \frac{\min(\sqrt a,\sqrt c)}{ac}.
$$
This is, locally around $0$, integrable, since splitting in cases  $a<c$ and $a>c$ we have: 
$$
\int_0^\delta\int_0^\delta
\frac{\min(\sqrt a,\sqrt c)}{ac}\,\dd a\,\dd c
=
\int_0^\delta\int_0^c
\frac{1}{\sqrt a\,c}\,\dd a\,\dd c
+
\int_0^\delta\int_0^a
\frac{1}{a\sqrt c}\,\dd c\,\dd a
$$
where we changed the integration variables from $(a,b)$ to $(a,c)$.
The right-hand side is then finite, for example
$$
\int_0^\delta\int_0^c
\frac{1}{\sqrt a\,c}\,\dd a\,\dd c
=
2\int_0^\delta c^{-1/2}\,\dd c <\infty,
$$
and the second term is identical. The case when $b=0$ and $c=0$ is analogous by symmetry.

Away from the $c=0$ - case: we cannot have both $a$ and $b$ to be close to $0$ due to the restrictions of the domain (we need $a+b>1$). \\
When only $a$ is close to $0$, the singularity is of order $a^{-1/2}$ which is integrable as well.\\
By symmetry the same happens when only $b$ is close to $0$.\\
When $a\sim 0$ while $b \to \infty$ we have $c \to \infty$ and the integrand is
$$ \frac{
\min(1,\sqrt a,\sqrt b,\sqrt c)
}
{abc} \sim \frac{\min(\sqrt{a})}{abc} \sim \frac{1}{\sqrt{a}b^2}$$
for which the integral is also finite: $$ \int_0^1 \frac{da}{\sqrt{a}}  \int_1^\infty \frac{db}{b^2} <\infty. $$
\\
Finally for large $a,b$, we have 
$$
\frac{
\min(1,\sqrt a,\sqrt b,\sqrt c)
}
{abc}
\le
\frac{1}{ab(a+b-1)}
\lesssim
\frac{1}{ab(a+b)}, 
$$
which is integrable when $a,b\ge 1$,  since $$ \iint_{a,b\ge 1}\frac{1}{ab(a+b)}\,\dd a\,\dd b
\le
2\int_1^\infty\int_1^a \frac{1}{a^2b}\,\dd b\,\dd a =
2\int_1^\infty \frac{\log a}{a^2}\,\dd a $$ 
which is finite. We get then that 
$$
\sup_{x >1} \frac{x+1}{\sqrt{x}} 
 \iint_{y+z > x} \frac{\min (\sqrt{y}, \sqrt{z}, \sqrt{y+z-x}, \sqrt{x})}{(y+1)(z+1)(y+z-x+1)} \dd y \dd z < \infty.
 $$
 
\underline{Now if $0<x<1$:} Since the minimum is always less than $\sqrt{x}$ we may bound our main quantity by 
$$ 
(x+1) 
 \iint_{y+z > x} \frac{\dd y  \dd z }{(y+1)(z+1)(y+z-x+1)}
 \lesssim \iint_{y+z > x} \frac{\dd y \dd z }{(y+1)(z+1)(y+z+1)}
$$
where in the inequality we applied the bound $y+z+1\leq 2 (y+z-x+1)$, which holds since $y+z>x$ and $x<1$. Then we write for $s=y+z$:
$$ \int_{0}^{\infty} \frac{\dd s}{(s+1)} \int_{y=0}^s \frac{\dd y }{(y+1)(s-y+1)}  = 2 \int_{0}^{\infty} \frac{\log(s+1)}{(s+2)(s+1)} \dd s  = \frac{\pi^2}{6}<\infty
$$ since $s>0$, and then we conclude because the last integral is finite. 

Altogether we have that indeed $$\sup_{x >0}\ \frac{x+1}{\sqrt{x}} 
 \iint_{y+z > x} \frac{\min (\sqrt{y}, \sqrt{z}, \sqrt{y+z-x}, \sqrt{x})}{(y+1)(z+1)(y+z-x+1)} \dd y \dd z \quad \text{ is finite}. $$
 
 \noindent
    \underline{\textbf{Case 2: Let $\mu=0$.}} As before we rescale: $\omega_1= a \omega$, $\omega_2= b \omega$, $\omega_3= \omega(a+b-1)$, for $a,b>0$ and $a+b>1$. Then the quantity we aim to estimate uniformly in $\omega_0$, is 
    \begin{align*}
        \sqrt{\omega_0} 
 \iint_{\omega_1+\omega_2 > \omega_0} 
 \frac{ \min ( \sqrt{\omega_1}, \sqrt{\omega_2}, \sqrt{\omega_3}, \sqrt{\omega_0})}{\omega_1 \omega_2 \omega_3} \dd \omega_1 \dd \omega_2= \\
 = 
 \iint_{\substack{a,b>0\\ a+b>1}}
\frac{
\min(1,\sqrt a,\sqrt b,\sqrt{a+b-1})
}
{ab(a+b-1)}
\,\dd a\,\dd b, 
    \end{align*}
    since we check that all the powers of $\omega_0$ cancel. This integral was already examined above, as it is the case where the large $x\geq 1$ case essentially boiled down to. 
    Repeating verbatim, we conclude that it is finite. 
    %Now we examine this integral and we see that the only possible singularities come when $a=0, b=0, a+b-1=0$. Also, since $a+b>1$, we cannot have the same time both $a$ and $b$ to be $0$. \\
    %When $a\sim 0$ and $a+b\sim 1$, then $b \sim 1$ and the integrand is bounded by 
    %$$ C \frac{\min (\sqrt a,\sqrt{a+b-1}) }{ab (a+b-1)} da db.$$
    %Exactly as was done above, this integral is finite. \\
    %When $b=0$ and $a+b\sim 1$ the same.
    We conclude then that \eqref{eq: unif bound} holds for all $\mu \geq 0.$   \end{proof}

\subsubsection{Non Compactness of $K_\mu$} \label{subsec:NoCompact}

In this subsection we prove that approaching the problem of relaxation towards equilibrium in a perturbative way around the multiplication operator, is not (!) the proper machinery to tackle the problem. 
In particular we show that after splitting $L_\mu$ into a multiplication $A_\mu$ and a remaining part $K_\mu$ as written in \eqref{eq: A_mu+K_mu}, is not helpful since the operator $K_\mu$ is not compact in $L^2(\sqrt{\omega} \dd \omega)$.

This comes in contrast to the existing literature in classical kinetic theory of particles but also phonons, where understanding the spectrum of the linearised operator boils down to understanding the multiplicative part, since the remaining operator $K_\mu$ associated to these systems, can be proven to be in fact compact and thus leaves the spectrum unaffected (modulo creating some additional discrete spectrum by Weyl), \cite{GradUnesco, LaureSR09, Villani_review, cercignani1988, Clement06, Clement07} for classical references in the study of the linear Boltzmann operator in the kinetic theory particles,  \cite{LukkarinenSpohn2008, GerLaMen26, GerLaMen26review}, in the study of kinetic theory of phonons, but also \cite{Men23} for a truncated version of the same kinetic wave operator considered here, where compactness of the remaining part $K_\mu$ holds. Comparing our result here with \cite{Men23}, shows that the lack of compactness comes due to high frequencies which exactly is reflected in the proof below.  

To start with, we remind that, for $\mu>0$ the operator $K_\mu: L^2(\sqrt{\omega}) \to L^2(\sqrt{\omega})$ is the operator 
\begin{align*}
    (K_{\mu} g )(\omega) &= \iint_{D(\omega)} \frac{\operatorname{min}(\sqrt{\omega},\sqrt{\omega_1}, \sqrt{\omega_2}, \sqrt{\omega_3})}{\sqrt{\omega}\prod_{\ell=1}^3 (\omega_\ell+\mu)} \Bigg(2 g(\omega_1) (\omega_1 + \mu)  - g(\omega_3)(\omega_3 + \mu)  \Bigg) \dd \omega_1 \dd \omega_2
\\
& =: (K_{1,\mu}g)(\omega) - (K_{2,\mu}g)(\omega)
\end{align*}
where $D(\omega) = \{(\omega_1, \omega_2) \in (0,\infty)^2: \omega_1+ \omega_2 \geq  \omega\}$. 
The main result of this subsection is: 

\begin{proposition} \label{prop:noncompactness}
  For all $\mu  \geq 0$, the operator $K_\mu$ is not compact on $L^2(\sqrt{\omega} \dd \omega)$.
\end{proposition}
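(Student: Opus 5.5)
We will exhibit a bounded sequence in $L^2(\sqrt{\omega}\dd\omega)$ that converges weakly to zero but whose images under $K_\mu$ do not converge to zero in norm. The sequence comes from the dilation structure of the kernel. For $\mu=0$ the kernel of $K_0$ is exactly homogeneous: $\omega\mapsto\lambda\omega$ multiplies $\Omega$ by $1$ and $n_1n_2n_3(\omega_\ell+\mu)$ by $\lambda^{-2}$, while $\dd\omega_1\dd\omega_2$ gains $\lambda^2$. We therefore use the $L^2(\sqrt\omega\dd\omega)$-unitary dilations
$$(U_\lambda \phi)(\omega)=\lambda^{-3/4}\phi(\omega/\lambda),$$
which commute with $K_0$. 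For $\mu>0$ the same dilations asymptotically commute with $K_\mu$ as $\lambda\to\infty$, since $\omega_\ell+\mu=\lambda(\omega'_\ell+\mu/\lambda)$. This is consistent with the remark that noncompactness is a high-frequency phenomenon.

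\textbf{Step 1.} Fix $\phi\in C_c^\infty(1,2)$ with $\phi\ge0$ and $\phi\not\equiv0$. Set $\phi_k=U_{\lambda_k}\phi$ with $\lambda_k=2^{k}\to\infty$. Then $\|\phi_k\|=\|\phi\|$, and the supports are pairwise disjoint. Hence the $\phi_k$ are orthonormal up to a constant and $\phi_k\rightharpoonup0$.

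\textbf{Step 2.} Change variables $\omega=\lambda x$, $\omega_i=\lambda x_i$. This gives
$$U_\lambda^{-1}K_\mu U_\lambda=K^{(\mu/\lambda)},$$
where $K^{(\epsilon)}$ has the same form as $K_\mu$ with $\mu$ replaced by $\epsilon$. Consequently $\|K_\mu\phi_k\|=\|K^{(\mu/\lambda_k)}\phi\|$. For $\mu=0$ this equals $\|K_0\phi\|$ for every $k$.

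\textbf{Step 3.} We show $K^{(\epsilon)}\phi\to K_0\phi$ as $\epsilon\to0$, at least pointwise, and then use Fatou to get
$$\liminf_{\epsilon\to0}\|K^{(\epsilon)}\phi\|\ge\|K_0\phi\|.$$
The integrand of $K^{(\epsilon)}\phi$ is monotone or dominated in $\epsilon$, because the factors $(\omega_\ell+\epsilon)^{-1}$ increase as $\epsilon\downarrow0$ and the factors $\omega_1+\epsilon$ multiplying $g$ are bounded on the compact support of $\phi$. Pointwise convergence at each fixed $\omega$ then follows by dominated convergence. The domination uses the same kind of integrability as the $\mu=0$ computation in the boundedness proposition, together with local integrability near $\omega_1\in(1,2)$ and $\omega_3\in(1,2)$.

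\textbf{Step 4.} This is the main obstacle: showing $K_0\phi\neq0$. The operator is a difference $K_{1,0}-K_{2,0}$, so cancellation must be excluded. We take $\omega$ small, say $\omega<\tfrac12$. Then $\min(\sqrt\omega,\dots)\le\sqrt\omega$, and we compare the two pieces directly. In $K_{1,0}\phi(\omega)=2\iint\Omega\,\phi(\omega_1)\omega_1/(\omega_1\omega_2\omega_3)$, the $\omega_2$-integral near $\omega_2\approx0$ produces a contribution of order $\log(1/\omega)$, because $\Omega\sim\min(\sqrt{\omega_2},\sqrt\omega)/\sqrt\omega$ and $\omega_3\approx\omega_1-\omega$. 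In $K_{2,0}$ the positive function $\phi$ sits at $\omega_3\in(1,2)$ with $\omega_1+\omega_2\in(1+\omega,2+\omega)$. There the region where one of $\omega_1,\omega_2$ is small gives the analogous log, but with a different constant. If the asymptotic coefficients of $\log(1/\omega)$ turn out equal, we will instead look at $\omega\to\infty$. There only $K_{2,0}$ can see $\phi$ with $\omega_3\in(1,2)$, because $\omega_1,\omega_2\le\omega+2$, while $K_{1,0}$ needs $\omega_1\in(1,2)$ and hence $\omega_2\sim\omega$. We then compare the power decay rates of the two pieces; these should differ and force $K_0\phi\ne0$.

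\textbf{Conclusion.} Given $K_0\phi\neq0$, the sequence $\phi_k\rightharpoonup0$ satisfies $\liminf\|K_\mu\phi_k\|>0$. A compact operator maps weakly null sequences to norm-null sequences, so $K_\mu$ is not compact for every $\mu\ge0$.
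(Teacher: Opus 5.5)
Your Steps 1--3 and the conclusion follow the paper's argument: the unitary dilations $U_\lambda$, the conjugation $U_\lambda^{-1}K_\mu U_\lambda=K_{\mu/\lambda}$, the pointwise limit $K_{\mu/\lambda}f\to K_0f$ by monotonicity, and Fatou. The difference is in Step 4. The paper simply picks some $0\le f\in C_c^\infty$ with $K_0f\neq0$ and takes its existence for granted. You instead fix an arbitrary $\phi$ and try to prove $K_0\phi\neq0$ by comparing the asymptotics of $K_{1,0}\phi$ and $K_{2,0}\phi$. That comparison fails at both ends, because the two pieces cancel exactly to the orders you propose to use.

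For $\phi\in C_c^\infty(1,2)$, a direct computation gives, as $\omega\to0$,
\[
K_{1,0}\phi(\omega)=2\int\frac{\phi(x)}{x}\Big(\log\frac{x}{\omega}+2\Big)\dd x+o(1)=K_{2,0}\phi(\omega)+o(1).
\]
So the coefficients of $\log(1/\omega)$ \emph{and} the constant terms coincide; the claimed "different constant" is false. As $\omega\to\infty$, both pieces see $\phi$, and both $K_{1,0}\phi(\omega)$ and $K_{2,0}\phi(\omega)$ equal
\[
\frac{2}{\omega^{3/2}}\int\phi(x)\sqrt{x}\Big(\log\frac{\omega}{x}+2\Big)\dd x+o(\omega^{-3/2}).
\]
So the decay rates and leading constants agree as well. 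This happens for every $\phi$, so Step 4 as written does not establish $K_0\phi\neq0$, and without that the Fatou argument yields no contradiction.

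The separation has to be found near the diagonal instead. Write $K_{i,0}g(\omega)=\int k_i(\omega,\omega')g(\omega')\dd\omega'$. For $\omega_1=\omega'$ close to $\omega$, the region $\omega_2\to0$ (or $\omega_3\to0$ when $\omega'<\omega$) gives $k_1(\omega,\omega')\sim 2\pi\,(\omega|\omega-\omega'|)^{-1/2}$. By contrast, $k_2$ is bounded on compact subsets of $(0,\infty)^2$: its only endpoint singularities, at $\omega_1\to0$ or $\omega_2\to0$, are of integrable order $\omega_1^{-1/2}$ and $\omega_2^{-1/2}$.

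So take $\phi=\phi_\eta\in C_c^\infty(1,2)$ with $0\le\phi_\eta\le1$, equal to $1$ on an interval of length $\eta$ and supported in an interval of length $2\eta$. For $\omega$ on the plateau, $K_{1,0}\phi_\eta(\omega)\gtrsim\sqrt\eta$ while $K_{2,0}\phi_\eta(\omega)\lesssim\|\phi_\eta\|_{L^1}\lesssim\eta$. Hence $K_0\phi_\eta>0$ on a set of positive measure once $\eta$ is small. With this choice of $\phi$, your Steps 1--3 go through unchanged, and this also supplies the justification the paper leaves implicit.
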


For the proof we are going to use the following lemma.

\begin{lemma}
    Let $0\leq f \in C_c^\infty((0,\infty))$ with $\operatorname{supp}f \subset[a,b]$, where $0<a<b<\infty$. Then $$ K_\mu f \to K_0 f \ \text{ a.e. as } \mu \downarrow 0^+. $$
\end{lemma}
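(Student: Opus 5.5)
Fix $\omega>0$ and use dominated convergence in the double integral defining $(K_\mu f)(\omega)$. Write the integrand as
$$
F_\mu(\omega_1,\omega_2)=\frac{\min(\sqrt{\omega},\sqrt{\omega_1},\sqrt{\omega_2},\sqrt{\omega_3})}{\sqrt{\omega}\,\prod_{\ell=1}^3(\omega_\ell+\mu)}\Big(2f(\omega_1)(\omega_1+\mu)-f(\omega_3)(\omega_3+\mu)\Big),
$$
and treat the two pieces $K_{1,\mu}f$ and $K_{2,\mu}f$ separately. Pointwise convergence $F_\mu\to F_0$ holds wherever all $\omega_\ell>0$, which is a.e. in $D(\omega)$. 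So it suffices to find, for each fixed $\omega>0$ and $\mu\in(0,1]$, an integrable majorant independent of $\mu$. Since $\omega_\ell+\mu\ge\omega_\ell$, we may bound $1/(\omega_\ell+\mu)\le 1/\omega_\ell$ for the factors not cancelled by the factor $(\omega_j+\mu)$ multiplying $f$.

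For $K_{1,\mu}$ the factor $(\omega_1+\mu)$ cancels, leaving
$$
2\,\frac{\min(\cdots)}{\sqrt{\omega}}\,\frac{f(\omega_1)}{(\omega_2+\mu)(\omega_3+\mu)}\le \frac{2\|f\|_\infty\chi_{[a,b]}(\omega_1)\min(\sqrt{\omega_2},\sqrt{\omega_3})}{\sqrt{\omega}\,\omega_2\,\omega_3},
$$
with $\omega_3=\omega_1+\omega_2-\omega$. With $\omega_1\in[a,b]$ fixed in a compact set, we check integrability in $\omega_2$. Near $\omega_2=0$ the bound is of order $\omega_2^{-1/2}\omega_3^{-1}$, which is integrable unless $\omega_3\to0$ simultaneously, i.e. $\omega_1\approx\omega$. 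There the bound $\min(\sqrt{\omega_2},\sqrt{\omega_3})/(\omega_2\omega_3)$ in the variables $(\omega_1,\omega_2)$ is exactly the locally integrable singularity $\min(\sqrt a,\sqrt c)/(ac)$ already treated in the boundedness proposition, after the change of variables $(\omega_1,\omega_2)\mapsto(\omega_2,\omega_3)$ with unit Jacobian. Near $\omega_3=0$ alone the bound is of order $\omega_3^{-1/2}$, which is integrable. For $\omega_2\to\infty$ the bound decays like $\omega_2^{-3/2}$. So the majorant is in $L^1(D(\omega))$.

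For $K_{2,\mu}$ we change variables to $(\omega_1,\omega_3)$, or equivalently $(\omega_2,\omega_3)$, with $\omega_3\in[a,b]$, and argue symmetrically. The majorant is
$$
\frac{\|f\|_\infty\chi_{[a,b]}(\omega_3)\min(\sqrt{\omega_1},\sqrt{\omega_2})}{\sqrt\omega\,\omega_1\omega_2}
$$
on the line $\omega_1+\omega_2=\omega+\omega_3$. This line is bounded, so only the endpoints $\omega_1\to0$ and $\omega_2\to0$ matter. There the singularity is $\omega_1^{-1/2}$ (respectively $\omega_2^{-1/2}$), since the other variable stays near $\omega+\omega_3\ge a>0$. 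This is integrable.

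The main point requiring care is the corner where two of the variables vanish simultaneously in the $K_1$ term. This is handled by the same splitting $a<c$ and $a>c$ used earlier. Dominated convergence then gives $K_\mu f(\omega)\to K_0 f(\omega)$ for every $\omega>0$, which is in particular a.e. Monotonicity in $\mu$ of each term could alternatively give the result via monotone convergence, but domination is cleaner because of the sign difference between $K_1$ and $K_2$.
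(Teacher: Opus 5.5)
Your proof is correct and is essentially the paper's argument. The paper uses monotone convergence on each nonnegative piece, since $(\omega_\ell+\mu)^{-1}\uparrow\omega_\ell^{-1}$, and then checks that each $K_{0,j}f(\omega)$ is finite using the same changes of variables to $(\omega_2,\omega_3)$ and $(\omega_1,\omega_3)$; you instead use these $\mu=0$ integrands as the dominating function, which requires exactly the same integrability check (and you make the $\omega_2^{-3/2}$ decay at infinity explicit, which the paper leaves implicit).
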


\begin{proof}
 We have 
$$[K_\mu f](\omega) = \iint_{D(\omega)} \Omega (\omega, \omega_1, \omega_2, \omega _3) n_{\mu,1} n_{\mu,2} n_{\mu,3} \Big[- \frac{f_3}{n_{\mu,3}} + \frac{f_1}{n_{\mu,1}} + \frac{f_2}{n_2} 
\Big] \dd \omega_1 \dd \omega_2 =: -K_{\mu,3}f+K_{\mu,1}f+K_{\mu,2}f.
$$ 
Since $f \geq 0$ by assumption, all three integrands are nonnegative and also $n_{\mu,j} = \frac{1}{\omega_j+\mu}$ is increasing in $\mu$ and $ n_{\mu,j}  \uparrow n_{0,j}=\frac{1}{\omega_j}$ as $\mu\downarrow 0$. And so, each of the integrands increases to the corresponding integrands for $\mu=0$. The Monotone
Convergence Theorem then gives
$$  K_{\mu,j}f(\omega)  \to K_{0,j}f(\omega), \quad \text{ for each } j=1,2,3.
$$
It remains to check that the three individual integrals $K_{0,j}f$ are finite so that it makes sense to add and subtract them in order to conclude that $ K_{\mu}f(\omega) \to K_{0}f(\omega)$. We start with $K_{0,1}f$: 
$$ 
K_{0,1}f = \iint_{D(\omega)} \Omega (\omega, \omega_1, \omega_2, \omega _3) \frac{f_1}{\omega_2 \omega_3} \dd \omega_1 \dd\omega_2 \leq \frac{\|f\|_{\infty}}{\sqrt{\omega}} \iint_{\substack{\omega_2, \omega_3 >0\\ a\leq \omega+\omega_3-\omega_2\leq b}} \frac{ \operatorname{min}(\sqrt{\omega_2}, \sqrt{\omega_3})}{ \omega_2 \omega_3} \dd \omega_2 \dd\omega_3
$$
where we changed the variables from $(\omega_1, \omega_2) \to (\omega_2, \omega_3)$. Now the last integral is finite, since near $0$: 
$$ \int_0^1\int_0^1
\frac{\min ( \sqrt{\omega_2},\sqrt{\omega_3})}
{\omega_2\omega_3}
\dd \omega_2 \dd \omega_3 = 2\int_0^1\int_0^{\omega_3}
\frac{1}{\sqrt{\omega_2} \omega_3}
\dd \omega_2 \dd \omega_3 < \infty.$$
Same argument holds for the term $K_{0,2}f$ by symmetry. Finally for $K_{0,3}f$: 
$$ K_{0,3}f = \iint_{D(\omega)} \Omega (\omega, \omega_1, \omega_2, \omega _3) \frac{f_3}{\omega_1 \omega_2} \dd \omega_1 \dd\omega_2 \leq \frac{\|f\|_{\infty}}{\sqrt{\omega}} \iint_{\substack{ a<\omega_3<b\\0<\omega_1 < \omega + \omega_3}} \frac{ \operatorname{min}(\sqrt{\omega_1}, \sqrt{\omega+\omega_3-\omega_1})}{ \omega_1 (\omega+\omega_3-\omega_1)} \dd \omega_1 \dd\omega_3
$$
where we changed the variables from $(\omega_1, \omega_2) \to (\omega_1, \omega_3)$. 
This is again finite, arguing as above for the possible singularities at zero, since 
$$  K_{0,3}f \lesssim \frac{\|f\|_{\infty}}{\sqrt{\omega}} \int_a^b \frac{\dd \omega_3}{\sqrt{\omega+\omega_3}}<\infty.$$
\end{proof}
\begin{proof}[Proof of Proposition \ref{prop:noncompactness}]
 %We will use the fact that $K_\mu f \to K_0 f$ a.e. as $\mu \downarrow 0^+$ for one $f \in C_c^\infty([a,b])$ for some $a>0, b <\infty$. This result we can prove earlier and cite it.
    
 Let $\mu>0$: We define the unitary operator 
$U_\lambda : L^2(\sqrt{\omega}\dd \omega) \to L^2(\sqrt{\omega} \dd \omega)$ by the dilation  $(U_{\lambda}f)(\omega):= \lambda^{-\frac{3}{4}} f \left( \frac{\omega}{\lambda}\right)$. This is indeed unitary in this space since: 
$$ \int_0^\infty |U_{\lambda}f|^2 (\omega) \sqrt{\omega}\dd \omega = \int_0^\infty |f|^2(x) \sqrt{x} \dd x $$
after changing the variable $\omega = \lambda x$. 
We now notice that the operator $K_{\tfrac{\mu}{\lambda}}$ is the operator defined by the conjugation $$U_\lambda^{-1} K_\mu U_\lambda = K_{\tfrac{\mu}{\lambda}}.$$ Indeed: We look for example at the first piece $K_{1,\mu}$ (the second piece is similar) and we write 
\begin{align*} 
[K_{1,\mu} U_\lambda g](\omega) &= 2 \iint_{D(\omega)} \frac{\operatorname{min}(\sqrt{\omega},\sqrt{\omega_1}, \sqrt{\omega_2}, \sqrt{\omega_3})}{\sqrt{\omega}\prod_{\ell=2}^3 (\omega_\ell+\mu)} \lambda^{-\frac{3}{4}}  g\left(\frac{\omega_1}{\lambda}\right)\  \dd \omega_1 \dd \omega_2 \\ & = 
 2 \lambda^{-\frac{3}{4}}\iint_{D(z)} 
  \frac{\operatorname{min}(\sqrt{z},\sqrt{x}, \sqrt{y}, \sqrt{x+y-z})}{\sqrt{z} (y+\tfrac{\mu}{\lambda})(x+y-z+\tfrac{\mu}{\lambda})} g(x)
  \ \dd x \dd y 
    \\ & = \lambda^{-\frac{3}{4}} \big[ K_{1,\tfrac{\mu}{\lambda}}  g \big] \left( \frac{\omega}{\lambda}\right) = \big[U_\lambda K_{1,\tfrac{\mu}{\lambda}}\ g \big](\omega) 
    \end{align*}
    for $\omega_1 = \lambda x, \omega_2 = \lambda y, \omega = \lambda z$, since the Jacobian cancels the $\lambda^2$ factor from the denominator. 
    Now we proceed by a contradiction argument. We consider a smooth function localised away from the origin, say $0\leq f \in C_c^\infty([a,b])$, with $K_0 f \neq 0$. Then we define the sequence 
    \begin{align}
    \psi_n:= U_{\lambda_n} f \quad \text{ with the parameter } \ \lambda_n \to \infty, \text{ as } \ n \to \infty.\label{EPsin}
    \end{align}
     Since the operator is unitary we have for all $n$, 
 $$
\|\psi_n\|_{L^2(\sqrt{\omega}d\omega)} = \| U_{\lambda_n}f\|_{L^2(\sqrt{\omega}d\omega)} = \|f\|_{L^2(\sqrt{\omega}d\omega)},
 $$
    and then 
   \begin{align}
   \psi_n \rightharpoonup 0\,\,\text{in}\,\, L^2(\sqrt{\omega}d\omega)\,\,\text{as}\,\, n \to \infty
   \end{align}
  since $f$ is supported away from zero. 
    Now, if the operator $K_\mu$ was compact, we would have that $K_\mu \psi_n \to 0$ strongly, or equivalently using the properties discussed above, compactness would imply that 
    \begin{equation} \label{eq:compact_contradict}
    \begin{split}
    \| K_{\tfrac{\mu}{\lambda_n}} f \|_{L^2(\sqrt{\omega}d\omega)} &= \| U_{\lambda_n} K_{\tfrac{\mu}{\lambda_n}} f \|_{L^2(\sqrt{\omega}d\omega)} 
    \\ &= \| K_\mu U_{\lambda_n} f \|_{L^2(\sqrt{\omega}d\omega)} = \|K_\mu \psi_n \|_{L^2(\sqrt{\omega}d\omega)} \to 0.
    \end{split}
    \end{equation} 
    But we also have that for a.e. $\omega \in (0,\infty)$:  $\Big[ K_{\tfrac{\mu}{\lambda_n}} f\Big] (\omega) \to [K_0 f] (\omega)$ for the $f$ we considered here.
    
    Therefore
    \begin{align*}
        \int_0^\infty |K_0 f|^2 \sqrt{\omega} d \omega = \int_0^\infty \liminf_{n \to \infty} \left\vert K_{\tfrac{\mu}{\lambda_n}} f\right\vert^2 \sqrt{\omega} \ d \omega \leq  \liminf_{n \to \infty} \int_0^\infty  \left\vert K_{\tfrac{\mu}{\lambda_n}} f\right\vert^2 \sqrt{\omega} \ d \omega =0 
    \end{align*}
    where for the inequality we applied Fatou's lemma and the last limit follows from \eqref{eq:compact_contradict}. This is a contradiction since the left-hand side is  strictly positive. Thus the operator $K_{\mu}$ cannot be compact in $L^2(\sqrt{\omega}d\omega)$. 
    
    If we suppose  that $\mu =0$ then 
    $$U_\lambda^{-1} K_0 U_\lambda = K_{0}.$$
 As in the previous case, if the operator $K_0$ was compact, we would have $\|K_0\psi_n\|_{ L^2(\sqrt \omega d\omega ) }\to 0$ as $n\to \infty$ and 
     \begin{equation} \label{eq:compact_contradict2}
    \begin{split}
    \| K_0 f \|_{L^2(\sqrt{\omega}d\omega)} &= \| U_{\lambda_n} K_0 f \|_{L^2(\sqrt{\omega}d\omega)} 
    \\ &= \| K_0 U_{\lambda_n} f \|_{L^2(\sqrt{\omega}d\omega)} = \|K_0 \psi_n \|_{L^2(\sqrt{\omega}d\omega)} \to 0.
    \end{split}
    \end{equation} 
That contradiction shows that $K_0$ is not compact. 
 \end{proof}

\subsection{Collisional Invariants and some first time decay properties for the linear semigroup}

We start with the following proposition.

\begin{proposition}
The operator $-L_\mu$ is m-accretive on $L^2(\sqrt{\omega}\dd \omega)$.  
\end{proposition}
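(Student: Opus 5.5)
We rely on the previous proposition: $L_\mu$, extended by density from $C_c^\infty(0,\infty)$, is a bounded operator on $L^2(\sqrt{\omega}\dd\omega)$. It is also symmetric, since the bilinear form $\mathcal D_\mu(g,h)$ obtained by polarisation of the symmetrised Dirichlet form is symmetric in $(g,h)$, and this passes to the limit. A bounded symmetric operator on a Hilbert space is self-adjoint, so $-L_\mu$ is a bounded self-adjoint operator with domain the whole space. Thus m-accretivity reduces to two properties, accretivity and the range condition, both of which are soft once boundedness and the sign are available.

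\emph{Accretivity.} For $g\in C_c^\infty(0,\infty)$ the symmetrisation computation at the beginning of Section \ref{sec:properties of L} gives
$$\langle -L_\mu g,g\rangle=\mathcal D_\mu(g,g)=\frac14\iiint \min(\sqrt{\omega_0},\dots,\sqrt{\omega_3})\prod_{\ell=0}^3 n_\ell\,[\cdots]^2\ge 0.$$
The map $g\mapsto\langle -L_\mu g,g\rangle$ is continuous on $L^2(\sqrt\omega\dd\omega)$ because $L_\mu$ is bounded. Since $C_c^\infty(0,\infty)$ is dense, nonnegativity extends to every $g\in L^2(\sqrt\omega\dd\omega)$. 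Hence $-L_\mu$ is accretive.

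\emph{Range condition.} Let $\lambda>0$. For every $g$,
$$\|(\lambda I-L_\mu)g\|\,\|g\|\ge\langle(\lambda I-L_\mu)g,g\rangle\ge\lambda\|g\|^2 .$$
So $\lambda I-L_\mu$ is injective with closed range. Its adjoint is $\lambda I-L_\mu$ itself, which is injective, so the orthogonal complement of the range is trivial. The range is therefore all of $L^2(\sqrt\omega\dd\omega)$. Alternatively, one could observe directly that $\|L_\mu\|\le C_\mu$, so for $\lambda>C_\mu$ the Neumann series gives surjectivity, and then extend to all $\lambda>0$ by the standard fact that the range condition for one $\lambda$ implies it for all.

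We expect no serious obstacle. The only point needing care is that the symmetrised sign identity is a priori valid only for compactly supported smooth $g$, where Fubini is justified. For this reason the nonnegativity must be transferred to general $g$ by the density and continuity argument above, rather than by applying the integral identity directly to general $g$. By Lumer--Phillips, this m-accretivity yields the contraction semigroup $S_t=e^{tL_\mu}$ of Theorem \ref{MainTh2}(ii).
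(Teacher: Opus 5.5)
Your proof is correct and follows essentially the paper's route: accretivity comes from the nonnegativity of the Dirichlet form, which you extend from $C_c^\infty(0,\infty)$ by density and continuity (a step the paper leaves implicit), and surjectivity comes from the coercivity $\langle(\lambda I-L_\mu)g,g\rangle\ge\lambda\|g\|^2$. The only difference is packaging: the paper obtains the range condition by applying Lax--Milgram to $a(g,h)=\langle g-\lambda L_\mu g,h\rangle$, whereas you argue that an operator bounded below has closed range and that its adjoint (here the operator itself) is injective, which amounts to the same argument.
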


\begin{proof}
For all $\lambda >0$,
\begin{align*}
\| f-\lambda L_\mu (f) \|_{L^2(\sqrt{\omega})}^2=    
\| f \|_{L^2(\sqrt{\omega})}^2+\lambda^2  \| L_\mu(f) \|_{L^2(\sqrt{\omega})}^2-2 \lambda\langle f,L_\mu f\rangle_{L^2(\sqrt{\omega})} \ge \|f\|_{L^2(\sqrt{\omega})}^2
\end{align*}
since $\langle f, L_\mu f\rangle_{L^2(\sqrt{\omega})}\leq 0$.
Then $-L_\mu$ is accretive. Moreover, for any $f\in L^2(\sqrt \omega d\omega )$ and $\lambda>0$,there exists $g\in L^2$ such that $g-\lambda L_\mu(g)=f$. If we define $a(g, h)=\langle g-\lambda L_\mu(g), h\rangle $ this is a bilinear form on 
$ L^2(\sqrt \omega \dd\omega )\times L^2(\sqrt \omega \dd\omega )$ continuous and coercive since 
$a(g, g)=\|g\|_{L^2(\sqrt{\omega})}^2-\lambda \langle L_\mu(g), g)\rangle_{\sqrt{\omega}} \ge \|g\|_{L^2(\sqrt{\omega})}^2$. By Lax-Milgram there exists $g\in L^2(\sqrt{\omega} \dd \omega)$ such that $a(g, h)=\langle f, h\rangle$ for all $h\in L^2(\sqrt{\omega} \dd \omega)$ and then such that $g-\lambda L_\mu(g)=f$.
\end{proof}
Since the operator $L_\mu$ is symmetric on $L^2(\sqrt{\omega} \dd \omega)$, by standard semigroup theory we get the following corollary. 

\begin{corollary} \label{cor: decay_contr}
For every $\mu \ge 0$, the operator $L_\mu$ generates a strongly continuous semigroup of contractions $e^{t L_\mu}$ in $L^2(\sqrt{\omega} \dd \omega)$. For all $f\in L^2(\sqrt{\omega} \dd \omega)$, the function $u(t)=e^{t L_\mu}f$ satisfies:
\begin{itemize}
    \item[(i)] The Cauchy problem 
    \begin{align*}
\begin{cases}
&\frac{\partial u}{\partial t}=L_{\mu} u,\,\,\forall t>0\\
&u(0)=f, 
\end{cases}
\end{align*}
possesses a unique solution $u\in C([0, \infty),L^2(\sqrt{\omega} \dd \omega))\cap C^1((0,\infty),L^2(\sqrt{\omega} \dd \omega)) $.  
\item[(ii)] It holds that
\begin{align*}
&\| L_\mu(u(t))\|_{L^2(\sqrt{\omega} \dd \omega)} \le \frac{1}{\sqrt 2\, t} \|f\|_{L^2(\sqrt{\omega} \dd \omega)},\,\forall\ t>0, \text{ and } \\
&\int_0^{\infty} s \| L_ \mu u(s)\|^2 _{ L^2(\sqrt \omega d\omega ) }\dd s\le
 \frac{1}{4}\| f \|^2_{ L^2(\sqrt \omega \dd\omega ) }.
 \end{align*}
\item[(iii)] For all $f\in L^2(\sqrt{\omega} \dd \omega)$
 \begin{align*}
\lim _{ t\to \infty } \|S_t(I-\Pi)f \|_{L^2(\sqrt{\omega} \dd\omega )}=0.
\end{align*}
\end{itemize}
\end{corollary}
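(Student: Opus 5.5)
Since $L_\mu$ is bounded (by the boundedness proposition above), self-adjoint and $-L_\mu\ge 0$ on $L^2(\sqrt\omega\dd\omega)$, I would work entirely with the spectral theorem rather than with general semigroup theory. Write $-L_\mu=\int_{[0,\|L_\mu\|]}\lambda\,\dd E_\lambda$ for the spectral resolution. Then $S_t=e^{tL_\mu}=\int e^{-t\lambda}\dd E_\lambda$ is a norm-continuous (in particular strongly continuous) semigroup of contractions, since $|e^{-t\lambda}|\le 1$ on the spectrum. Because $L_\mu$ is bounded, the map $t\mapsto S_tf$ is even $C^1$ (indeed analytic) on $[0,\infty)$ with derivative $L_\mu S_tf$. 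Uniqueness for the Cauchy problem in (i) follows from the standard argument: if $u$ is any solution with $u(0)=0$, then $\frac{\dd}{\dd s}S_{t-s}u(s)=0$, so $u\equiv0$. Alternatively, differentiating $\|u\|^2$ and using the sign of $\langle L_\mu u,u\rangle$ gives the same conclusion.

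For (ii), I would compute via the functional calculus
\[
\|L_\mu u(t)\|^2=\int\lambda^2e^{-2t\lambda}\,\dd\langle E_\lambda f,f\rangle\le \sup_{\lambda\ge0}\lambda^2e^{-2t\lambda}\,\|f\|^2=\frac{1}{e^2t^2}\|f\|^2 .
\]
This gives the bound $1/(et)$ stated in Theorem \ref{MainTh2}, which is sharper than $1/(\sqrt2\,t)$. For the integrated bound, Fubini gives
\[
\int_0^\infty s\,\|L_\mu u(s)\|^2\dd s=\int\lambda^2\Big(\int_0^\infty s e^{-2s\lambda}\dd s\Big)\dd\langle E_\lambda f,f\rangle=\frac14\int_{(0,\infty)}\dd\langle E_\lambda f,f\rangle\le\frac14\|f\|^2 .
\]
An alternative proof uses energy identities. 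One has $\frac{\dd}{\dd t}\|u\|^2=2\langle L_\mu u,u\rangle$. One also has $\frac{\dd}{\dd t}\langle -L_\mu u,u\rangle=-2\|L_\mu u\|^2$, so $t\mapsto\|L_\mu u(t)\|$ is nonincreasing. Multiplying by $t$ and integrating then yields the integrated inequality, which implies the pointwise one with a slightly weaker constant. Either route works, so I would present the spectral one.

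For (iii), note that $\operatorname{Ker}L_\mu=E(\{0\})L^2$ and $\Pi=E(\{0\})$. Hence
\[
\|S_t(I-\Pi)f\|^2=\int_{(0,\infty)}e^{-2t\lambda}\dd\langle E_\lambda f,f\rangle .
\]
The integrand tends to $0$ pointwise on $(0,\infty)$ and is bounded by $1$, which is integrable against the finite measure $\dd\langle E_\lambda f,f\rangle$. Dominated convergence then gives the limit $0$.

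The only delicate point is (iii): without a spectral gap there is no rate, so the statement is purely qualitative. The key step is the identification of $\Pi$ with the spectral projection $E(\{0\})$, which holds because $L_\mu$ is self-adjoint. The remaining steps are routine.
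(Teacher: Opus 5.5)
Your proof is correct. It differs from the paper's mainly in part (iii).

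\textbf{Parts (i) and (ii).} The paper obtains the semigroup, (i) and (ii) by citing the classical theory of symmetric maximal monotone operators (Brezis). The constants $1/(\sqrt2\,t)$ and $1/4$ in the statement are exactly those produced by the energy argument you sketch as your alternative: integrate $\frac{\dd}{\dd t}\big(t\langle -L_\mu u,u\rangle\big)$, then use that $\|L_\mu u(t)\|$ is nonincreasing.

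\textbf{Part (iii).} The paper does not use the spectral measure of $f$. It uses $\overline{\operatorname{Ran}L_\mu}=(\operatorname{Ker}L_\mu)^\perp$ to approximate $(I-\Pi)f$ within $\varepsilon$ by some $L_\mu h$. It then bounds $\|S_t(I-\Pi)f\|\le\varepsilon+\|L_\mu S_t\|\,\|h\|$, using contractivity and $\|L_\mu e^{tL_\mu}\|\le (et)^{-1}$.

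Once $\Pi=E(\{0\})$ is identified, your route replaces this $\varepsilon$-approximation by a single dominated-convergence step. It is shorter and gives the sharper pointwise constant $1/(et)$, which is the one stated in Theorem~\ref{MainTh2}(ii). It also yields the exact identity $\int_0^\infty s\|L_\mu u(s)\|^2\dd s=\frac14\|(I-\Pi)f\|^2$.

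The paper's density argument, on the other hand, needs spectral information only as a decay rate for $S_t$ on the range of $L_\mu$. The energy bound in (ii) already supplies this, namely $\|L_\mu S_t h\|\le \|h\|/(\sqrt2\,t)$. So that argument would go through in the abstract Brezis framework without constructing a spectral resolution.
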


\begin{proof}
The existence of the semigroup of contractions and properties (i), (ii) follow from classical results (cf. \cite {brezis2011}).

Now for (iii): For every $f\in L^2(\sqrt{\omega}\dd\omega)$,  
$(I-\Pi)f \in \big( \operatorname{Ker}(L_\mu ) \big)^\perp$ and as $L_\mu$ is self-adjoint in $L^2(\sqrt{\omega})$, $\overline{\operatorname{Ran}(L_\mu )} =\big( \operatorname{Ker}(L_\mu) \big)^\perp$. By basic properties of closure, for all $\varepsilon>0$ we may find $h \in L^2(\sqrt{\omega}\dd \omega)$ so that $$ \| (I-\Pi)f - L_\mu h\|_{L^2(\sqrt{\omega}\dd \omega)} < \varepsilon. $$
We turn now to the quantity we want to estimate:
\begin{equation}
    \begin{split}
        \|S_t (I- \Pi)f\|_{L^2(\sqrt{\omega}d \omega)} & \leq \|S_t ( (I- \Pi)f - L_\mu h) \|_{L^2(\sqrt{\omega}\dd \omega)} + \| L_\mu S_t  h\|_{L^2(\sqrt{\omega}\dd \omega)}\\
        & \leq \| (I- \Pi)f - L_\mu h \|_{L^2(\sqrt{\omega}\dd \omega)} + \| L_\mu S_t  h\|_{L^2(\sqrt{\omega}\dd \omega)}
        \\ & < 
        \varepsilon + \| L_\mu e^{t L_\mu}\| \|h\|_{L^2(\sqrt{\omega}\dd  \omega)}
    \end{split}
\end{equation}
where we commuted $S_t=e^{t L_\mu}$ with $L_\mu$ for the second term, and for the first term we used the  contractivity of $S_t$.
%contraction from Corollary \ref{cor: decay_contr}.  

From the spectral theorem we have then 
$$\| L_\mu e^{tL_\mu} \| = \sup_{\lambda \in \sigma(L_\mu)} |\lambda e^{t \lambda}| \leq \sup_{\lambda \geq 0} \lambda e^{-t \lambda} = (et)^{-1} \to  0, $$
since $\sigma(L_\mu) \subset (-\infty, 0]$ and since $\lambda e^{-t \lambda}$ is maximised at $\lambda=t^{-1}$. Thus since $\varepsilon > 0$ is arbitrary, we conclude that 
$$\|S_t (I- \Pi)f\|_{L^2(\sqrt{\omega}\dd \omega)}  \to 0 $$
and so $S_t f$ converges strongly to $\Pi f$. 
\end{proof}

\begin{remark}However so far this approach does not give any quantitative or even improved qualitative decay information on the rate of convergence, even though it already indicates stability of the Rayleigh-Jeans on the whole space.
Better decay requires additional knowledge on the spectrum. As we will prove in the next Section the linear operator possesses in fact a spectral gap which gives an exponentially fast return to equilibrium in that functional space. 
\end{remark}

\subsubsection{Determining $\operatorname{Ker}(L_\mu)$}
We finish this section by providing a proof of the characterisation of the collisional invariants, that is determining the Kernel of $L_\mu$. 

Assume that $\omega(k) = |\textbf{k}|^2, d=3$ and look at the problem before considering radial variables and before parametrising. We include a proof of the fact that $\operatorname{Ker}(L_\mu)$ is spanned by $1, \textbf{k}, \omega(\textbf{k})$, $\textbf{k} \in \mathbb{R}^3$, while restricted in $L^2$ with $\mu>0$ it is spanned by $\widetilde{n}_\mu$. 

Since we are going to work in the $k$-variables (without parametrising), we will use the notation $\widetilde L_\mu $, $\widetilde f$ valued at $\textbf{k}$ (and not $\omega$). 
After symmetrising, we remind that  the Dirichlet form is 
\begin{align*}D_{\mu}(\widetilde g)= -\langle L_\mu \widetilde g, \widetilde g\rangle_{L^2} & =\frac{1}{4} \iiiint \prod_{i=1}^3 \widetilde n_\mu (k_i)\times \\
&\times \left[\frac{\widetilde g(k_1)}{\widetilde n_\mu(k_1)} + \frac{\widetilde g(k_2)}{\widetilde n_\mu(k_2)} - \frac{\widetilde g(k)}{\widetilde n_\mu(k)} - \frac{\widetilde g(k_3)}{\widetilde n_\mu(k_3)}\right]^2 \delta_\Omega \delta_\Sigma\  \dd k \dd k_1 \dd k_2 \dd k_3. 
\end{align*}

\begin{proposition}[Kernel of the linearised operator] \label{Prop:Kernel_L}
Let $\mu>0$, $\widetilde{n}_\mu(\textbf{k}) = (|\textbf{k}|^2 + \mu)^{-1}$ and $\widetilde{g} \in L^2(\mathbb{R}^3)$. Then $\widetilde{g} \in \operatorname{Ker}(L_\mu)$ if and only if $\Phi (\textbf{k}) := \widetilde{g}(\textbf{k})/\widetilde{n}_\mu(\textbf{k})$ is a collisional invariant in the sense that, on the resonant manifold: 
    $$\Phi(\textbf{k}_1) + \Phi(\textbf{k}_2) - \Phi(\textbf{k}) - \Phi(\textbf{k}_3)=0, $$
    if and only if  
    $$
    \widetilde{g}(\textbf{k})= \widetilde{n}_\mu(\textbf{k}) ( \alpha \textbf{k} + \gamma |\textbf{k}|^2 ) \in L^2(\mathbb{R}^3)$$ 
    for a.e. $\textbf{k} \in \mathbb{R}^3$, and for some $\alpha, \gamma \in \mathbb{R}$ and $\beta \in \mathbb{R}^3$.
    
    In particular, the condition
    $$\widetilde{n}_\mu(\textbf{k}) (\alpha+\beta\cdot \textbf{k} +\gamma|k|^2) \in L^2(\mathbb{R}^3) $$ forces
    $\beta=0$, $ \gamma=0$. Thus 
    $$ \operatorname{Ker}_{L^2(\mathbb{R}^3)}(L_\mu)=\operatorname{Span}\{\widetilde{n}_\mu\}.
    $$
\end{proposition}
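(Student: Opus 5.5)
The proof has three steps: show that the kernel consists exactly of the zeros of the Dirichlet form, solve the resulting functional equation on the resonant manifold, and then impose square integrability.

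\emph{Step 1: kernel versus collisional invariants.} Since $-\widetilde L_\mu$ is bounded, symmetric and nonnegative, $\widetilde g\in\operatorname{Ker}(\widetilde L_\mu)$ if and only if $D_\mu(\widetilde g)=0$. One direction is immediate. For the other, Cauchy--Schwarz for the nonnegative form gives $|\langle \widetilde L_\mu \widetilde g,h\rangle|^2\le D_\mu(\widetilde g)D_\mu(h)$ for all $h$, so $D_\mu(\widetilde g)=0$ forces $\widetilde L_\mu\widetilde g=0$.

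In the symmetrised expression for $D_\mu$, the weight $\prod_i \widetilde n_\mu(\textbf{k}_i)$ is strictly positive. The measure $\delta_\Omega\delta_\Sigma$ is supported on the resonant manifold and equivalent there to its surface measure. Hence $D_\mu(\widetilde g)=0$ if and only if $\Phi=\widetilde g/\widetilde n_\mu$ satisfies $\Phi(\textbf{k}_1)+\Phi(\textbf{k}_2)=\Phi(\textbf{k})+\Phi(\textbf{k}_3)$ almost everywhere on that manifold. For $\widetilde g\in L^2$, $\Phi$ is locally integrable because $\widetilde n_\mu^{-1}$ is locally bounded, so the identity makes sense almost everywhere.

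\emph{Step 2: solving the functional equation.} This is the main obstacle, because the equation holds only almost everywhere and $\Phi$ is merely $L^2_{\rm loc}$. I would first regularise. Both resonance constraints are invariant under translations $\textbf{k}_j\mapsto \textbf{k}_j+\textbf{h}$; the quadratic one is preserved precisely because of the linear one. They are also invariant under rotations. So convolving $\Phi$ with a smooth compactly supported mollifier $\rho_\varepsilon$ produces a smooth $\Phi_\varepsilon$ satisfying the same identity everywhere on the manifold.

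For smooth $\Phi$, I would parametrise the resonant quadruples as $\textbf{k}_1=\textbf{k}+\textbf{a}$, $\textbf{k}_2=\textbf{k}+\textbf{b}$, $\textbf{k}_3=\textbf{k}+\textbf{a}+\textbf{b}$ with $\textbf{a}\cdot\textbf{b}=0$. The equation becomes
\[
\Phi(\textbf{k}+\textbf{a})+\Phi(\textbf{k}+\textbf{b})=\Phi(\textbf{k})+\Phi(\textbf{k}+\textbf{a}+\textbf{b}).
\]
Differentiating in $\textbf{a}$ and $\textbf{b}$ at $0$ gives $\textbf{a}^{T}D^2\Phi(\textbf{k})\textbf{b}=0$ for all orthogonal pairs $\textbf{a},\textbf{b}$. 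A symmetric matrix with this property is a multiple of the identity, so $D^2\Phi=c(\textbf{k})I$. Taking third derivatives, $\partial_i(c\,\delta_{jl})$ must be symmetric in $i,j,l$, which in dimension $3$ forces $\nabla c=0$.

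It follows that $\Phi_\varepsilon=\alpha_\varepsilon+\beta_\varepsilon\cdot \textbf{k}+\gamma_\varepsilon|\textbf{k}|^2$. These form a finite-dimensional, hence closed, subspace of $L^1_{\rm loc}$, and $\Phi_\varepsilon\to\Phi$ in $L^1_{\rm loc}$. Therefore $\Phi$ has the same form almost everywhere. Conversely, such $\Phi$ are clearly collisional invariants, since each of $1$, $\textbf{k}$ and $|\textbf{k}|^2$ is conserved on the manifold.

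\emph{Step 3: square integrability.} We need $(\alpha+\beta\cdot\textbf{k}+\gamma|\textbf{k}|^2)/(|\textbf{k}|^2+\mu)$ to lie in $L^2(\mathbb{R}^3)$.

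If $\gamma\neq0$, the function tends to $\gamma$ at infinity and is not integrable. If $\gamma=0$ and $\beta\neq0$, it behaves like $|\textbf{k}|^{-1}$ on a cone, and $\int^\infty r^{-2}r^2\,dr=\infty$. Hence $\beta=0$ and $\gamma=0$. Meanwhile $\widetilde n_\mu\in L^2$ because $\int (r^2+\mu)^{-2}r^2\,dr<\infty$ when $\mu>0$. This gives $\operatorname{Ker}=\langle\widetilde n_\mu\rangle$.

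\emph{Radial statement.} Restricting to radial functions via Remark~\ref{rem:on_functional_space}, and noting that $\widetilde L_\mu$ commutes with rotations and corresponds to $L_\mu$ on radial functions, gives the claim in $L^2(\sqrt\omega\,\dd\omega)$: $g=a\,n$.
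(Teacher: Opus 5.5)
Your proposal is correct and follows the paper's proof essentially step for step. Both characterise the kernel as the zero set of the symmetrised Dirichlet form and parametrise the resonant manifold by $\textbf{k}_1=\textbf{k}+a$, $\textbf{k}_2=\textbf{k}+b$ with $a\perp b$. Both then use mixed second and third derivatives to show that the Hessian of a smooth invariant is a constant multiple of the identity, remove the regularity assumption by mollification together with closedness of the finite-dimensional span $\operatorname{Span}\{1,\textbf{k},|\textbf{k}|^2\}$, and finally impose square integrability.

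Your additions are small but real tightenings of points the paper passes over:
\begin{itemize}
\item the Cauchy--Schwarz argument showing that $D_\mu(\widetilde g)=0$ forces $\widetilde L_\mu\widetilde g=0$;
\item the explicit use of translation invariance of the resonant manifold to justify that the mollified function still satisfies the identity;
\item in the $L^2$ step, the treatment of general combinations $\alpha+\beta\cdot\textbf{k}+\gamma|\textbf{k}|^2$, rather than only observing that $k_i\widetilde n_\mu$ and $|\textbf{k}|^2\widetilde n_\mu$ are individually not in $L^2$.
\end{itemize}
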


\begin{proof} $\widetilde g \in \operatorname{Ker}(L_\mu)$ if and only if $\langle \widetilde L_\mu \widetilde g,\widetilde g\rangle_{L^2(\mathbb{R}^3)}=0$. This is equivalent to
$$ \Phi(\textbf{k}_1) + \Phi(\textbf{k}_2) - \Phi(\textbf{k}) - \Phi(\textbf{k}_3) =0, \quad \text{ where }\ \Phi(\textbf{k}_i) := \frac{\widetilde g(\textbf{k}_i)}{\widetilde n_\mu(\textbf{k}_i)}, $$ 
for almost every resonant quadruple. Notice also that
$ \Phi=(|\textbf{k}|^2+\mu) \widetilde g \in L^2_{\text{loc}}(\mathbb{R}^3),$ 
since $\widetilde g\in L^2(\mathbb{R}^3)$ and $(|\textbf{k}|^2+\mu)$ is bounded on every compact subset of $\mathbb{R}^3$.

From this, we are going to determine the form of $\Phi$.

We have from the resonant conditions that $$\textbf{k}_1+ \textbf{k}_2 -\textbf{k}-\textbf{k}_3=0, \qquad |\textbf{k}_1|^2+ |\textbf{k}_2|^2 -|\textbf{k}|^2-|\textbf{k}_3|^2=0.$$
Let us write $\textbf{k}_1-\textbf{k}:=a \in \mathbb{R}^3$ and $\textbf{k}_2-\textbf{k}:=b \in \mathbb{R}^3$. Then the energy conservation yields
$$ |\textbf{k}+a|^2+ |\textbf{k}+b|^2 -|\textbf{k}|^2-|\textbf{k}+a+b|^2=0 \quad \Longrightarrow \quad \langle a,b\rangle =0.$$
A collision invariant $\Phi$ satisfies 
$$ F(a,b) := \Phi(\textbf{k}+a)+ \Phi(\textbf{k}+b) - \Phi(\textbf{k}) - \Phi(\textbf{k}+a+b)=0 \quad \text{ whenever} \quad a\perp b.$$  
We first assume that $\Phi\in C^2(\mathbb{R}^3)$ and we differentiate in $a$ and then in $b$. Say that $a=s \hat{e_i}$ and $b=t \hat{e_j}$ for two orthogonal directions $\hat{e_i}, \hat{e_j}$. We first get from our functional identity that
$$\partial_s F(s,t)\vert_{s=0} = \Big[  \nabla \Phi(\textbf{k}+s \hat{e_i}) \cdot \hat{e_i} - \nabla \Phi (\textbf{k}+s \hat{e_i} +t \hat{e_j} ) \cdot \hat{e_i} \Big]\vert_{s=0} =0. 
$$
We then differentiate in $t$ where the first term is zero and we get 
$$
\partial_{t} \partial_{s} F(s,t)\vert_{s=t=0} = \hat{e_j}^T \nabla^2 \Phi (\textbf{k} +s \hat{e_i} +t \hat{e_j} ) \cdot \hat{e_i} \vert_{s=t=0} =0 \quad \Longrightarrow  \ \hat{e_j}^T \nabla^2 \Phi (\textbf{k}) \cdot \hat{e_i} =0.
$$
In other words, all the mixed directions vanish (all the off-diagonal terms of the Hessian are zero)  since $\partial_{ij} \Phi( \textbf{k} ) = [\nabla^2 \Phi ( \textbf{k} )]_{ij}=0$ for all $i\neq j$.

Moreover, if we apply the same argument for the vectors $u=\frac{e_i+e_j}{2},v=\frac{e_i-e_j}{2}$, for $i\neq j$ (which are vertical to each other and so the argument applies), we see that $\frac{1}{2}(\partial_{ii}\Phi - \partial_{jj}\Phi) = u^T \nabla^2\Phi \cdot v =0$ and thus the diagonal entries of the Hessian are all equal to each other. 

Thus the Hessian is a multiple of the identity: 
$$\nabla^2 \Phi (\textbf{k}) = \lambda(\textbf{k}) \operatorname{Id} \quad \text{ for some function } \lambda(\textbf{k}). $$
Now $\lambda(\textbf{k})=\lambda$ has to be a constant: Indeed if we differentiate $\Phi$ once more in a direction $\hat{e_i}$ for $i\neq j$ we get on the one hand
$$ \partial_{ijj}\Phi = \partial_{j}\big(\partial_{ij}\Phi \big) =0 $$
and on the other hand that $\partial_{ijj}\Phi  = \partial_i( \partial_{jj}\Phi)  =\partial_i \lambda(k)$. So $\partial_i \lambda(k)=0$ and $i, j$ were arbitrary. Thus 
$$\nabla^2 \Phi (\textbf{k}) = \lambda \operatorname{Id} \quad \text{ for some } \lambda. $$

Thus $$\Phi(\textbf{k}) = \alpha + \beta  \cdot \textbf{k} + \gamma |\textbf{k}|^2,\quad \text{ for }\quad   \alpha, \gamma \in \mathbb{R},\ \beta \in \mathbb{R}^3 $$
and finally $$\widetilde g(\textbf{k})  = \widetilde n_\mu(\textbf{k}) \Big( \alpha + \beta \cdot \textbf{k} + \gamma |\textbf{k}|^2 \Big). $$

We now remove the regularity assumption on $\Phi$.  As noted above, $\Phi \in L^2_{\text{loc}}(\mathbb{R}^3)$ and it satisfies 
$$
\Phi(\textbf{k}+a)+ \Phi(\textbf{k}+b) - \Phi(\textbf{k}) - \Phi(\textbf{k}+a+b)=0 \quad \text{for a.e. } \textbf{k} \in \mathbb{R}^3 \text{ whenever} \ a\perp b. 
$$

We now consider the standard mollifier $\eta_\varepsilon$ and $\Phi_\varepsilon:= \eta_\varepsilon \ast \Phi$ which approximates $\Phi$ in the $L^2_{\text{loc}}$ topology: $\Phi_\varepsilon \to \Phi$ in $L^2_{\text{loc}}$ as $\varepsilon \downarrow 0$. Then we notice 
\begin{equation} \label{eq:kernel_proof2}
    \begin{split}
        & \Phi_\varepsilon(\textbf{k}+a)  + \Phi_\varepsilon(\textbf{k}) + \Phi_\varepsilon(\textbf{k}+b) + \Phi_\varepsilon(\textbf{k}+a+b)  = \\
  & \int_{\mathbb{R}^3} \eta_\varepsilon(y) [\Phi(\textbf{k}-y+a)+ \Phi(\textbf{k}-y+b) - \Phi(\textbf{k}-y) - \Phi(\textbf{k}-y+a+b)] \dd y=0,
    \end{split}
\end{equation}
since the integrand is $0$ for $\textbf{k}'=\textbf{k}-y \in \mathbb{R}^3$ whenever $a\perp b$. Also since $\Phi_\varepsilon$ is smooth, the left-hand side is continuous in $(\textbf{k},a,b)$ on $a\perp b$, and so the identity \eqref{eq:kernel_proof2} extends to holding from a.e. $(\textbf{k},a,b)$ to every such triple. 
Apply now the previous argument to the smooth $\Phi_\varepsilon$ to get that 
$$ \Phi_\varepsilon = \alpha_{\varepsilon} + \beta_{\varepsilon} \cdot \textbf{k} + \gamma_{\varepsilon} |\textbf{k}|^2, \quad \text{ for } \quad \alpha_{\varepsilon},  \gamma_{\varepsilon}\in \mathbb{R},\ \beta_{\varepsilon}\in \mathbb{R}^3. $$
Since the subspace
 $\operatorname{Span}\{1, k_1, k_2, k_3, |\textbf{k}|^2\}$  is finite dimensional it is closed in $L^2_{\text{loc}}$. Thus any limit is in that space and so we conclude. 
 
The other direction is automatic: if $\Phi(\textbf{k})$ is a linear combination of $1, \textbf{k}, |\textbf{k}|^2$, then it is clearly zero, due to the resonance conditions.  
Therefore we have concluded the first part of the statement that indeed 
$ \operatorname{Ker}_{L^2(\mathbb{R}^3)}(\widetilde L_\mu) = \widetilde n_\mu (\textbf{k}) \operatorname{Span}\{1, \textbf{k}, |\textbf{k}|^2\} \cap L^2(\mathbb{R}^3).$ 

But now notice that since at infinity $\widetilde n_\mu(\textbf{k}) \sim |\textbf{k}|^{-2}$, $k_i \widetilde n_\mu(\textbf{k})$ and $|\textbf{k}|^2 \widetilde n_\mu(\textbf{k}) \notin L^2(\mathbb{R}^3)$. So finally we must have that indeed $\operatorname{Ker}_{L^2(\mathbb{R}^3)}(\widetilde L_\mu)=\operatorname{Span}\{\widetilde n_\mu\}$. 
\end{proof}

\section{Nonlinear Local Well-Posedness}\label{sec:LWP}

We first prove a Proposition that controls, in the $L^2(\sqrt{\omega} \dd \omega)$ topology, both cubic and quadratic nonlinearities that appear in the equation when we consider solutions of the form $(\text{RJ} +\text{RJ}\times \text{perturbation})$.  The proof of that Proposition requires the Raleigh Jeans $n$ to be bounded. Therefore, we fix now the chemical potential $\mu>0$, so to work with non-singular Rayleigh-Jeans equilibria.  

As a consequence, we eventually provide a well-posedness result in the same space, local in time, for the full nonlinear equation, via a fixed point argument.
%Let us cite the recent articles \cite{IoakeimLeger26,IoakeimLeger25} where the authors also deal with local well-posedness for such equations, and in the former article they cover cases of power law initial data but not quite Rayleigh-Jeans yet. 

We remind that the evolution of $f(t,\omega)=(\omega+\mu)^{-1}(1+g(t,\omega))$ from \eqref{eq:NL evolution1} is written as follows: 
\begin{equation}\label{eq:NL evolution3}
\begin{split}
    \partial_t g(t,\omega) &= -n_0^{-1}(\omega)
     \iint_{D(\omega_0)} \frac{\text{min}(\sqrt{\omega_0},\sqrt{\omega_1},\sqrt{\omega_2},\sqrt{\omega_1+\omega_2-\omega_0})}{\sqrt{\omega_0}} \times \\ 
     & \times  \left( f f_2 f_3 + f f_1 f_3 - f_1 f_2 f - f_1 f_2 f_3  \right) \dd \omega_1 \dd \omega_2   \\
 &= L_\mu g +  \Gamma_\mu(g,g) + Q_\mu(g,g,g)
 \end{split}
\end{equation}
with the usual notation $f_3:= f(\omega_1+ \omega_2-\omega)$ and where 
\begin{align*}
L_\mu g = \iint_{D(\omega_0)} \frac{\min (\sqrt{\omega_1}, \sqrt{\omega_2}, \sqrt{\omega_3}, \sqrt{\omega_0})}{ \sqrt{\omega_0}} n_1n_2n_3 \Big[-\frac{g_0}{n_0}  - \frac{g_3}{n_3} + \frac{g_1}{n_1} + \frac{g_2}{n_2}   
\Big] \dd \omega_1 \dd \omega_2, 
\end{align*} 
$ \Gamma_\mu (g,g)$ contains the quadratic nonlinearities: 
\begin{equation}\label{eq: def of Gamma}
\begin{split} 
\Gamma_\mu(g,g) &= \iint_{\substack{\omega_1, \omega_2 \in (0,\infty)\\ \omega_1+\omega_2>\omega_0}}  \frac{\text{min}(\sqrt{\omega_0},\sqrt{\omega_1},\sqrt{\omega_2},\sqrt{\omega_1+\omega_2-\omega_0})}{\sqrt{\omega_0}} 
\times \\
& \hskip 6cm  \times n(\omega_1)n(\omega_2)n(\omega_1+\omega_2-\omega)\times \\
&\times \Big[ -g_0g_3 ( n_2^{-1} + n_1^{-1} ) + 
    g_0g_1 ( n_3^{-1} - n_2^{-1}) + g_0g_2 (n_3^{-1} - n_1^{-1}) \\ 
    &\hspace{0cm} + 
    g_3g_1 (n_1^{-1} - n_3^{-1}) + g_3g_2 (n_2^{-1} - n_3^{-1}) + g_1g_2 (n_2^{-1} + n_1^{-1}  )
\Big] \dd \omega_1 \dd\omega_2, 
\end{split}
\end{equation} 
 and $Q_\mu(g,g,g)$ contains the cubic nonlinearities: 
\begin{equation}\label{eq: def of Q}
\begin{split} 
 Q_\mu(g,g,g)
 = 
 &\iint_{\substack{\omega_1, \omega_2 \in (0,\infty)\\ \omega_1+\omega_2>\omega_0}}  \frac{\text{min}(\sqrt{\omega_0},\sqrt{\omega_1},\sqrt{\omega_2},\sqrt{\omega_1+\omega_2-\omega_0})}{\sqrt{\omega_0}} 
   n_1n_2n_3\times \\
  &\hspace{0cm}\times \Big[   n_3^{-1} g_0g_1g_2  +  n_0^{-1}g_1g_2g_3 -  n_1^{-1}g_0g_2g_3  -  n_2^{-1}g_0g_1g_3 \Big] \dd \omega_1 \dd \omega_2.
 \end{split}
\end{equation}  

\begin{proposition}\label{prop: control NL} Let $\mu>0$.
    It holds that 
    $$\|\Gamma_\mu (g,h)\|_{L^2(\sqrt{\omega}\dd\omega)} \leq C_\mu \|g\|_{L^2(\sqrt{\omega}\dd\omega)}\|h\|_{L^2(\sqrt{\omega}\dd\omega)}$$
    and 
    $$\|Q_\mu (g,h, \ell)\|_{L^2(\sqrt{\omega}\dd\omega)} \leq C_\mu \|g\|_{L^2(\sqrt{\omega}\dd\omega)} \|h\|_{L^2(\sqrt{\omega}\dd\omega)} \|\ell\|_{L^2(\sqrt{\omega}\dd\omega)},  
    $$
    for some finite constant $C_\mu>0$. 
    \end{proposition}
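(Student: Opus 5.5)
The plan is to reduce both estimates to bounds on multilinear forms. Pairing with a test function $\varphi \in L^2(\sqrt{\omega}\dd\omega)$ and using $\sqrt{\omega_0}\,\Omega = \min(\sqrt{\omega_0},\sqrt{\omega_1},\sqrt{\omega_2},\sqrt{\omega_3})$, the quantity $\langle \Gamma_\mu(g,h),\varphi\rangle$ (and its cubic analogue for $Q_\mu$) becomes a sum of integrals over $\{\omega_1+\omega_2\ge \omega_0\}$ with the fully symmetric weight $m:=\min_\ell\sqrt{\omega_\ell}$. Each such integral contains products like $|\varphi_0||g_i||h_j|$ times explicit rational weights. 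Expanding the coefficients, every monomial in \eqref{eq: def of Gamma} and \eqref{eq: def of Q} has weight of the form $n_an_b$ or $n_an_bn_c$ times a product of perturbations: for instance $n_1n_2n_3(n_1^{-1}+n_2^{-1}) = n_3(n_1+n_2)$, and $n_1n_2n_3n_0^{-1} = (\omega_0+\mu)n_1n_2n_3$.

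For the cubic terms I would first use $\mu>0$ through $n_j\le \mu^{-1}$. This converts each cubic monomial into a quadratic-type expression, e.g. $|n_3^{-1}g_0g_1g_2|\,n_1n_2n_3 \le \mu^{-1}|g_0||g_1||g_2|\,n_1n_2\cdots$. The structure is therefore the same as for $\Gamma_\mu$, with one extra factor absorbed in a cruder (but still $L^2$-controlled) way. The term $(\omega_0+\mu)n_1n_2n_3 g_1g_2g_3$ needs care: the factor $\omega_0+\mu$ must be compensated, using $\omega_0\le\omega_1+\omega_2$ together with the decay of $n_1n_2$.

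I then split each form into two classes. In the \emph{multiplier class} the output variable carries a perturbation (terms with $g_0$). There I aim for an $L^\infty$ bound
$$\sup_{\omega_0>0}\Big|\iint_{D(\omega_0)} \Omega\, n_a n_b\, h_j \dd\omega_1\dd\omega_2\Big| \lesssim_\mu \|h\|_{L^2(\sqrt\omega)},$$
obtained by changing variables so that $\omega_j$ is an integration variable and applying Cauchy--Schwarz with weight $\sqrt{\omega_j}$. This reduces matters to the finiteness, uniformly in $\omega_0$, of a weighted kernel integral of the same type as \eqref{eq: unif bound}. That bound is computed by the same rescaling $\omega_i=\omega_0 a_i$ when $\omega_0\ge\mu$ and by the direct bound when $\omega_0<\mu$. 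In the \emph{pure integral class} (e.g. $g_1g_2$, $g_1g_3$) I would bound the trilinear form $\iiint m\,(\text{weights})|\varphi_0||g_i||h_j|$ by a Schur-type argument. I apply Cauchy--Schwarz in two groupings, $|\varphi_0|^2|g_i|^2$ and $|h_j|^2$, and check that the resulting marginal kernels, integrated against the remaining variable, are bounded.

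Because the weights are only logarithmically summable at high frequencies (cf. $\int \log s/s^2$ in the boundedness proof), I would organise these checks by a dyadic decomposition $\omega_\ell\sim 2^{k_\ell}$. On each dyadic block, $m\le 2^{\min k_\ell/2}$ and $n_\ell\sim \min(\mu^{-1},2^{-k_\ell})$. The resonance constraint $\omega_0+\omega_3=\omega_1+\omega_2$ forces the two largest frequencies to be comparable. Bounding each block by the product of the $L^2(\sqrt\omega)$ norms of the localised pieces times a block factor, one then sums over blocks.

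The main obstacle I expect is the high--low interaction in which the output $\omega_0$ (or the factor $\omega_0+\mu$ in the cubic term) is large and one input is small. There the naive estimate produces a $\log\omega_0$ loss. I would try to beat it by keeping the full factor $m/\sqrt{\omega_0}$, which provides an extra gain $2^{(k_{\min}-k_0)/2}$. This gain should make the dyadic sums geometric rather than merely logarithmically divergent.
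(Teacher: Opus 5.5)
Your plan has a genuine gap in the two quadratic terms of $\Gamma_\mu$ that carry $g_0$ together with a \emph{difference} of inverse equilibria, namely $g_0g_1(n_3^{-1}-n_2^{-1})$ and $g_0g_2(n_3^{-1}-n_1^{-1})$. You expand every coefficient into monomials $n_an_b$. You then control the multiplier class through $\sup_{\omega_0}\bigl|\iint_{D(\omega_0)}\Omega\, n_an_b\, h_j\dd\omega_1\dd\omega_2\bigr|$. But $n_1n_2n_3(n_3^{-1}-n_2^{-1})=n_1n_2-n_1n_3$, and for either monomial this quantity is infinite. Fix $\omega_0,\omega_1$ and let $\omega_2\to\infty$; then $\omega_3\to\infty$, so $\Omega\to\min(1,\sqrt{\omega_1/\omega_0})>0$ does not decay, while $n_2,n_3\sim\omega_2^{-1}$. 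Since no perturbation sits at $\omega_2$ or $\omega_3$, both $\int^\infty\Omega\,n_2\dd\omega_2$ and $\int^\infty\Omega\,n_3\dd\omega_2$ diverge. Neither the test-function formulation nor the dyadic decomposition rescues this. Each shell $\omega_2\sim 2^k$ contributes a fixed amount, so the $k$-sum diverges. The $\min$-factor gain you plan to exploit does not help either, because it is independent of $\omega_2$. So the real obstacle is not a $\log\omega_0$ loss but an outright divergence in the integration variable. It is the same phenomenon the introduction stresses for $\mathcal C(n_\mu)$: only the detailed-balance combination is finite.

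The missing idea is to keep the difference intact: $n_1n_2n_3(n_3^{-1}-n_2^{-1})=(\omega_1-\omega_0)\,n_1n_2n_3$. By partial fractions,
\begin{equation*}
\int_{(\omega_0-\omega_1)_+}^\infty\frac{|\omega_1-\omega_0|\dd\omega_2}{(\omega_2+\mu)(\omega_1+\omega_2-\omega_0+\mu)}=\log\Bigl(1+\frac{|\omega_1-\omega_0|}{\mu}\Bigr).
\end{equation*}
Multiplying by $\min(1,\sqrt{\omega_1/\omega_0})$ gives a factor $\lesssim_\mu 1+\log(1+\omega_1)+\omega_1^{1/4}$, uniformly in $\omega_0$. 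Cauchy--Schwarz against $n_1|g_1|$ in $L^2(\sqrt{\omega_1}\dd\omega_1)$ then gives the $L^\infty_{\omega_0}$ bound; this is what the paper does.

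The remaining terms can be split monomially. For $g_0g_3$ and the cubic $g_0$-terms, the free variable either ranges over a bounded interval or is integrated against $n|g|$. Terms without $g_0$ get decay from a perturbation sitting at a frequency that goes to infinity. Your reduction $(\omega_0+\mu)n_1n_2\le n_1+n_2$ for the term $n_0^{-1}g_1g_2g_3$ is sound and slightly cleaner than the paper's direct dyadic treatment.

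However, an extra perturbation factor is not absorbed by $n_j\le\mu^{-1}$. What controls it is $\int n|g|\dd\omega\lesssim_\mu\|g\|_{L^2(\sqrt\omega\dd\omega)}$, which needs the decay of $n$. For the terms without $g_0$ you still have to supply a concrete block estimate. The paper uses Young's convolution inequality on dyadic blocks, with the $L^2$ slot on the highest frequency and $L^1$ slots elsewhere. An unweighted Schur test with your groupings does not close; for instance, the marginal $\int m\,n_1n_3\dd\omega_2$ diverges.
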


\begin{remark}[On the dependency of the constant $C_\mu$ on $\mu$]
Note that the estimate of $C_\mu $ above depends on $\mu$ and in particular it blows up as $\mu \to 0$. Indeed using the same dilation unitary operator as in the proof of Prop. \ref{prop:noncompactness}, $(U_\lambda g)(\omega) := \lambda^{-3/4} g(\lambda^{-1}\omega)$, we see that for example for the quadratic nonlinearities: $U_\lambda^{-1} \Gamma_{\mu} (U_\lambda g,U_\lambda g) = \lambda^{-3/4}\Gamma_{\tfrac{\mu}{\lambda}} (g,g) $. Indeed briefly, $(U_\lambda^{-1}g)(\omega)=\lambda^{3/4}g(\lambda\omega)$ and setting $ \omega_i=\lambda x_i$ in the integral, the measure contributes $\lambda^2$, the two RJ  factors contribute $\lambda^{-2}$, since $$ n_\mu(\lambda x)=\lambda^{-1}n_{\mu/\lambda}(x),  $$ the two factors $U_\lambda g$ contribute $\lambda^{-3/2}$, while the kernel is 0-homogeneous, and $U_\lambda^{-1}$ contributes $\lambda^{3/4}$. Altogether, $$ U_\lambda^{-1}\Gamma_\mu(U_\lambda g,U_\lambda g) = \lambda^{3/4+2-2-3/2} \Gamma_{\mu/\lambda}(g,g) = \lambda^{-3/4}\Gamma_{\mu/\lambda}(g,g). $$
Then $$\| U_\lambda^{-1} \Gamma_{\mu} (U_\lambda g,U_\lambda g)\|_{L^2(\sqrt{\omega})} = \| \Gamma_{\mu} (U_\lambda g,U_\lambda g) \|_{L^2(\sqrt{\omega})}  = \lambda^{-3/4} \|\Gamma_{\tfrac{\mu}{\lambda}} ( g,g) \|_{L^2(\sqrt{\omega})}. $$
For $\mu=\lambda$, we obtain the bound $C_1\mu^{-3/4}$. For the cubic term, a similar scaling in $\mu$ holds with power $-3/2$ instead of $-3/4$. Thus indeed our hypothesis $\mu>0$ is not just a technicality. 
%shows that our bounds of the nonlinear terms blow up as $\mu \to 0$. 
\end{remark}
    \begin{proof}
        \noindent
\underline{\emph{We start with the cubic nonlinearities}} which are more difficult. 
        We first notice that the terms for which the nonlinearity is of the form $$n_i^{-1}g_0g_kg_\ell \quad \text{ for }\ i, k, \ell \in \{1,2,3\},$$
        i.e. have $g_0$ as a factor, can be treated as follows. For example for the first term we write: 
        \begin{align*}
            &\left\| \iint_{\substack{\omega_1, \omega_2>0 \\ \omega_1+\omega_2>\omega_0}}  \frac{\text{min}(\sqrt{\omega_0},\sqrt{\omega_1},\sqrt{\omega_2},\sqrt{\omega_1+\omega_2-\omega_0})}{\sqrt{\omega_0}} 
   n_1n_2 g_0g_1g_2 \right\|_{L^2(\sqrt{\omega_0})}^2 \\
   & \leq \int_0^\infty \sqrt{\omega_0} |g_0|^2 \left\vert 
   \iint_{\substack{\omega_1, \omega_2>0 \\ \omega_1+\omega_2>\omega_0}}  \frac{\text{min}(\sqrt{\omega_0},\sqrt{\omega_1},\sqrt{\omega_2},\sqrt{\omega_3})}{\sqrt{\omega_0}} n_1n_2 g_1g_2 \dd \omega_1 \dd \omega_2
   \right\vert^2 d \omega  \\
   & \leq \|g\|_{L^2(\sqrt{\omega} d\omega)}^2\|g\|_{L^2(\sqrt{\omega} d\omega)}^4 = \|g\|_{L^2(\sqrt{\omega} d\omega)}^6, \end{align*}
        after bounding the minimum factor by $1$, multiplying and dividing by $\omega^{1/4}$ and applying H\"{o}lder's inequality in the inner double integral. The remaining two terms that look like this, are quite similar and thus omitted. 
        
Therefore,  the only delicate term in the cubic nonlinearity is the one with the nonlinearity of the form $n_0^{-1} g_1g_2g_3$. We call this integral $T_{\text{del}}(g,g,g)$.  
To bound this term, the idea is to proceed by a dyadic decomposition of
our integrals. We split the integral into annuli for $j \geq 0$,
so that the integral can be decomposed into 
\begin{align*} 
 I_j &:= \{ \omega >0: 2^{j} < \omega \leq 2^{j+1}\}, \quad \text{for } j\geq 0,  \\
 I_{-1}&:= \{ \omega >0: \omega \in (0,1]\}.
\end{align*}
We then control each integral on these annuli. 

 We start by writing $g = g 1_{I_{-1}} + \sum_{j \geq 0} g 1_{I_j}$ and assuming that $\omega_i \in I_{j_i}$ for each $i=-1,1,2,3,0$. We first going to show the estimate for $\min(j_0, j_1,j_2,j_3) \geq 0$, that is when all $\omega_{i}$'s are larger than $1$. 
 The following concerns bounding the localised-for-each-frequency delicate term  
        $$ 1_{I_{j_0}} T_{\text{del}}(g_{j_1},g_{j_2},g_{j_3}), \quad 
        \text{ where }\  g_{j_i} = g  1_{I_{j_i}}, 
        $$
        since the original term is recovered by gluing the localised terms together:
        \begin{align*}
             \|T_{\text{del}}(g,g,g)\|_{L^2(\sqrt{\omega})}& = 
            \left[\int_0^\infty \sqrt{\omega} |T_{\text{del}}|^2 \dd \omega \right]^{1/2}\\
            &= 
            \left[ \sum_{j_0\geq 0} \int_{I_{j_0}} \sqrt{\omega_0} \left\vert  
            \sum_{j_1, j_2, j_3} T_{\text{del}}(g_{j_1},g_{j_2},g_{j_3})
            \right\vert^2 \dd \omega_0 \right]^{1/2} \\
            &  = \left[\sum_{j_0\geq 0}
            \left\| \sum_{j_1, j_2, j_3} T_{\text{del}}(g_{j_1},g_{j_2},g_{j_3})
            \right\|_{L^2(I_{j_0};\sqrt{\omega}\dd \omega)}^2   \right]^{1/2} \\
            & \leq 
            \left[\sum_{j_0\geq 0} \left( 
            \sum_{j_1,j_2,j_3}
            \|T_{\text{del}}(g_{j_1},g_{j_2},g_{j_3})  \|_{L^2(I_{j_0};\sqrt{\omega}\dd \omega)} \right)^2 
            \right]^{1/2}
        \end{align*}
        
        where the last step is just the triangle inequality. 
        Above we may also assume that $j_1\geq j_2 \geq j_3$. We can do that up to a factor $6$ of all the possible permutations, since $\sum_{j_1,j_2,j_3} = \sum_{\text{permutations}} \sum_{j_{\sigma(1)} \geq j_{\sigma(2)} \geq j_{\sigma(3)}}$.  

        We estimate now the integral kernel:
        \begin{align}
            \sigma_{0,1,2,3}:&= \frac{\text{min}(\sqrt{\omega_0},\sqrt{\omega_1},\sqrt{\omega_2},\sqrt{\omega_3})}{\sqrt{\omega_0}} 
   \frac{n_1n_2n_3}{n_0}\notag \\
   &\lesssim \min \left(1,
   \sqrt{
   \frac{2^{\min(j_1,j_2,j_3)}}{2^{j_0}} 
   } \right) 2^{j_0 - j_1-j_2-j_3}  = \min \left(1,
   \sqrt{
   \frac{2^{j_3}}{2^{j_0}} 
   } \right) 2^{j_0 - j_1-j_2-j_3}
        \end{align}
        because we have assumed that $j_3$ is the minimum scale among the $j_1,j_2,j_3$. So then 
        \begin{equation} \label{eq:cubic term_dydadic}
            \begin{split}
                &\|T_{\text{del}}(g_{j_1},g_{j_2},g_{j_3})  \|_{L^2(I_{j_0};\sqrt{\omega}\dd \omega)}   = 
                \left[ \int_{I_{j_0}}\sqrt{\omega_0} \left\vert
                \iint \sigma_{0,1,2,3}  \ g_{j_1}g_{j_2}g_{j_3} \dd \omega_1 \dd \omega_2 
                \right\vert^2 \dd \omega_0\right]^{1/2}\\
                & \lesssim \min \left(1, \sqrt{
     \frac{2^{j_3}}{2^{j_0}} 
   } \right) 2^{j_0-j_1-j_2-j_3} 2^{j_0/4} 
   \left[ \int_{I_{j_0}}  \left\vert 
   \iint g_{j_1}g_{j_2}g_{j_3}\dd \omega_1\dd \omega_2
   \right\vert^2
   \right]^{1/2} \\
   &  = \min \left(1, 2^{(j_3-j_0)/2} \right) 2^{5 j_0/4-j_1-j_2-j_3}
   \| g_{j_1} \ast g_{j_2} \ast \overline{g_{j_3}} \|_{L^2(\dd \omega)}
            \end{split}
        \end{equation}
        where the factor $2^{j_0/4}$ comes due to the weight $\sqrt{\omega_0}$. Here we extend each $g_{j_3}$ by zero to the whole real line and denote by
$\overline{g_{j_3}}(z):=g_{j_3}(-z)$ its reflection. With this convention,
the resonance relation $\omega_3=\omega_1+\omega_2-\omega_0$ allows us to write the integral as a convolution on $\mathbb{R}$. This is necessary as the convolution makes $g_{j_3}$ to be valued at $-\omega_3$. 
        
        We continue by applying Young's Inequality to the convolution term in \eqref{eq:cubic term_dydadic}: 
        \begin{equation}\label{eq:Young_Ineq_convolution}
         \| g_{j_1} \ast g_{j_2} \ast \overline{g_{j_3}} \|_{L^2(\dd \omega)} \leq \|g_{j_1} \|_{L^2(\dd \omega)} \|g_{j_2} \|_{L^1(\dd \omega)} \| g_{j_3} \|_{L^1(\dd \omega)}, 
         \end{equation}
        where we choose to place the $L^2$ norm in the term localised in the highest frequency (here assumed to be $j_1$) so that we extract more decay. To get the estimate in our weighted topology $L^2(\sqrt{\omega}\dd \omega)$, we compute explicitly 
        \begin{align} \label{eq:estimL2_1}
            & \|g_{j_i}\|_{L^2(\dd \omega)} \lesssim 2^{-j_i/4} \|g_{j_i}\|_{L^2(\sqrt{\omega}\dd\omega)}\\
            \label{eq:estimL2_2}
            & \|g_{j_i}\|_{L^1(\dd \omega)} \lesssim 
            2^{j_i/4} \|g_{j_i}\|_{L^2(\sqrt{\omega}\dd\omega)}.
        \end{align}
        Inserting these estimates in \eqref{eq:cubic term_dydadic} we get  
        \begin{equation}
            \begin{split}
                 \|T_{\text{del}}(g_{j_1},g_{j_2},g_{j_3})  \|_{L^2(I_{j_0};\sqrt{\omega}\dd \omega)}  
                 \lesssim  
                &\min \left(1, 2^{(j_3-j_0)/2} \right) 2^{5 j_0/4 - 5j_1/4 - 3j_2/4 - 3j_3/4} \times  \\ &\times \|g_{j_1}\|_{L^2(\sqrt{\omega}\dd\omega)}
                \| g_{j_2}\|_{L^2(\sqrt{\omega}\dd\omega)}
                \| g_{j_3}\|_{L^2(\sqrt{\omega}\dd\omega)}.
            \end{split}
        \end{equation}
        Therefore, altogether we get 
        \begin{equation}
            \begin{split}
                        &\|T_{\text{del}}(g,g,g)\|_{L^2(\sqrt{\omega})} 
                        \lesssim  
                       \Bigg[ \sum_{j_0\geq 0}
                       \Bigg( \sum_{j_1\geq j_2 \geq j_3}   
                       \min \left(1, 2^{(j_3-j_0)/2} \right) \times \\ 
                       &  \times 
                    2^{5 j_0/4 - 5j_1/4 - 3j_2/4 - 3j_3/4}\|g_{j_1}\|_{L^2(\sqrt{\omega}\dd\omega)}
                \|g_{j_2}\|_{L^2(\sqrt{\omega}\dd\omega)}
                \| g_{j_3}\|_{L^2(\sqrt{\omega}\dd\omega)} \Bigg)^2 \Bigg]^{1/2}.
            \end{split}
        \end{equation}
        
        Now we split into two possible cases on whether $j_0$ is larger or smaller to $j_1$. \\

        \noindent
        \underline{Case 1: If $j_0 \geq j_1 - \text{Const.}$:} The resonant relation forces then that 
        $j_0 \sim j_1$. Indeed we have 
         $2^{j_0}\sim \omega_0  = \omega_1+\omega_2 - \omega_3 \leq \omega_1+\omega_2 \leq 2^{j_1+1}$ since $j_1$ is assumed to be the largest scale. So if 
$j_0 \geq j_1 - \text{Const.}$, it has to be the case that $j_0 \sim j_1$. In other words, $\omega_1$ and $\omega_0$ live on comparable dyadic scales, in that scenario.
%j_1-constant \leq j_0 \leq j_1+1
Inserting that in our main term, we don't have the sum over $j_1$ anymore (since $j_1$ and $j_0$ differ by at most a fixed constant). Then 
\begin{equation}
    \begin{split}
      &\|T_{\text{del}}(g,g,g)\|_{L^2(\sqrt{\omega})} 
        \lesssim \\ & 
        \left[\sum_{j_0\geq 0}
                       \left( \sum_{j_0\geq j_2 \geq j_3} 
                       2^{(j_3-j_0)/2}
                       2^{-3j_2/4 - 3j_3/4}
\|g_{j_0}\|_{L^2(\sqrt{\omega}\dd\omega)}
 \| g_{j_2}\|_{L^2(\sqrt{\omega}\dd\omega)}
\| g_{j_3}\|_{L^2(\sqrt{\omega}\dd\omega)}\right)^2 \right]^{1/2}\\
& = \left[\sum_{j_0\geq 0}
2^{-j_0}
\|g_{j_0}\|_{L^2(\sqrt{\omega}\dd\omega)}^2 
              \left( \sum_{j_0\geq j_2}       
 2^{-3j_2/4} \| g_{j_2} \|_{L^2(\sqrt{\omega}\dd\omega)}
 \sum_{j_3 \leq j_2 } 2^{-j_3/4} 
\| g_{j_3}\|_{L^2(\sqrt{\omega}\dd\omega)}\right)^2 \right]^{1/2}
\\
    & \lesssim \left[\sum_{j_0\geq 0} 2^{-j_0}\|g_{j_0}\|_{L^2(\sqrt{\omega}\dd\omega)}^2
                \right]^{1/2} \|g\|_{L^2(\sqrt{\omega}\dd\omega)}^2 \lesssim \|g\|_{L^2(\sqrt{\omega}\dd\omega)}^3.
           \end{split}
        \end{equation}
where in the last inequality we applied Cauchy-Schwarz. 

        \noindent
        \underline{Case 2: If $j_0 < j_1 - \text{Const.}$:} In this case the resonant condition forces $j_1\sim j_2 \sim j_3$. Indeed the resonances give that $\omega_1+ \omega_2=\omega_0+\omega_3$ and the left-hand side is dominated by $\omega_1$ since $j_1$ is the largest scale and $\omega_1+\omega_2 \sim 2^{j_1}$. In order to balance the large $\omega_1$ on the left, the right-side must contain a comparable large term. But  given that $\omega_0< \omega_1$, it must be that $\omega_3 \sim \omega_1$ or equivalently that $j_3 \sim j_1$. Now, since we have assumed that $j_1\geq j_2\geq j_3$ it has to be that all  frequencies are in comparable dyadic scales and thus $j_1\sim j_2 \sim j_3$. 

        Inserting that in our main term, we just have two main sums, since $j_1,j_2,j_3$ differ by at most constants. So:
        \begin{equation}
    \begin{split}
      &\|T_{\text{del}}(g,g,g)\|_{L^2(\sqrt{\omega})} 
        \lesssim \\ & 
        \left[ \sum_{j_0\geq 0}
                       \left( \sum_{j_1> j_0 + \text{Const.}}   
                        2^{5 j_0/4 - 11 j_1/4}
            \|g_{j_1}\|_{L^2(\sqrt{\omega}\dd\omega)}^3
                 \right)^2 \right]^{1/2} \\
                 & \lesssim 
                 \sum_{j_1\geq 0} 
                 \left[ \sum_{j_0} \left(1_{j_1>j_0+\text{Const.}} 2^{5 j_0/4 - 11 j_1/4}\|g_{j_1}\|_{L^2(\sqrt{\omega}\dd\omega)}^3\right)^2
                 \right]^{1/2}\\
                 & = 
                 \sum_{j_1\geq 0} 2^{- 11 j_1/4} \|g_{j_1}\|_{L^2(\sqrt{\omega}\dd\omega)}^3 
                 \left( \sum_{j_0< j_1 - \text{Const.}} 2^{5 j_0/2}  \right)^{1/2}\\
                 & \lesssim \sum_{j_1\geq 0} 2^{- 11 j_1/4} \|g_{j_1}\|_{L^2(\sqrt{\omega}\dd\omega)}^3 2^{5j_1/4}
                 \\
                 & = 
                 \sum_{j_1\geq 0} 2^{- 3 j_1/2} \|g_{j_1}\|_{L^2(\sqrt{\omega}\dd\omega)}^3  \lesssim \|g\|_{L^2(\sqrt{\omega}\dd\omega)}^3 
        \end{split}
        \end{equation}
        where the minimum-factor in the beginning is just $1$, for the second inequality we applied triangle inequality in the $\ell^2_{j_0}$-norm, and from the fourth to fifth line we bounded the geometric series by $2^{5(j_1 -\text{Const.})/2}\lesssim 2^{5 j_1/2}$.
        
For the remaining cases regarding the ordering of the scales $j_1, j_2, j_3$, i.e. when $j_3$ or $j_2 = \max(j_1,j_2, j_3)$, the argument is very similar and thus omitted. What is important in each case is to share the $L^2-L^1-L^1$ norm in the Young's inequality for convolutions, in \eqref{eq:Young_Ineq_convolution},  by placing the $L^2$ norm to the highest scales assumed. 

Finally notice that so far we treated all the scenaria as long as all the frequencies are larger than $1$. This suffices since the low-frequency block $I_{-1}$ is harmless for $\mu>0$: Indeed, we can use that on bounded frequencies, $n_i=(\omega+\mu)^{-1}$ and $n_i^{-1}$ are all bounded and estimates like \eqref{eq:estimL2_2} are still true: 
$$
\|g_{-1}\|_{L^1(\dd\omega)} \leq \left(\int_0^1 |g(\omega)|^2\sqrt{\omega} \dd \omega\right)^{1/2}
\left(\int_0^1 \omega^{-1/2} \dd\omega\right)^{1/2}
\lesssim
\|g_{-1}\|_{L^2(\sqrt{\omega}\dd\omega)}.
$$
In the Young convolution estimates above, the $L^2(\dd\omega)$ factor is always chosen to be in the highest frequency term.\\
So, \emph{if the highest frequency is not in the low block $I_{-1}$}, we use
\eqref{eq:estimL2_1} for that highest-frequency factor, 
while the remaining lower frequency factors are placed in $L^1(\dd\omega)$ slot as was done above. \\

\emph{If the highest frequency belongs in $I_{-1}$}, then all input frequencies are bounded: indeed
 if we have $j_1\geq j_2\geq j_3$ and $j_1=-1$ then $j_2= j_3=-1$ as well. Also by the resonance relation $\omega_0= \omega_1+\omega_2-\omega_3<2$, and thus bounded, too.
 In this case the space  $L^2((0,1);\sqrt{\omega}\dd\omega)$ is not a subspace of $L^2((0,1);\dd\omega)$ so we need to adjust the estimate \eqref{eq:estimL2_1} when applying Young's inequality for convolutions.
In fact we have $L^2((0,1);\sqrt{\omega}\dd\omega) \subset L^{\frac{6}{5}}((0,1))$: Indeed by H\"{o}lder's  
\begin{align*}
\int_0^1 |g(\omega)|^{6/5}\dd\omega =\int_0^1 \Big(|g(\omega)|^2\sqrt{\omega} \Big)^{3/5}\omega^{-3/10}\dd\omega & \leq \left(\int_0^1|g(\omega)|^2\sqrt{\omega}\dd\omega\right)^{3/5}
\left(\int_0^1\omega^{-3/4}\dd\omega\right)^{2/5} \lesssim \|g\|_{L^2(\sqrt{\omega})}^{6/5}.
\end{align*} 
In other words $\|g_{-1}\|_{L^{6/5}} \lesssim \|g_{-1}\|_{L^2(\sqrt{\omega})}$. Then in the convolution term we apply Young's inequality\footnote{We remind that the general form of Young's  inequality for 3 convolutions is $\|f \ast g \ast h \|_{L^r} 
\leq 
\|f\|_{L^p} \|g\|_{L^q} \|h\|_{L^s} $ for $\frac{1}{r}+2 = \frac{1}{p} + \frac{1}{q}+ \frac{1}{s}$. Here in the low frequency block, we applied it for $p=q=s=6/5$.},
$$
\|g_{-1}*g_{-1}*\overline{ g_{-1}}\|_{L^2(\dd\omega)}
\lesssim \|g_{-1}\|_{L^{6/5}(\dd\omega)}^3
\lesssim \|g_{-1}\|_{L^2(\sqrt{\omega}\dd\omega)}^3.
$$
Since the output is supported on a bounded interval, the weighted $L^2$ norm is controlled by the unweighted $L^2$ norm.
 %Thus the whole contribution is estimated directly on a bounded set (since the weights are also bounded).  
Thus we conclude that for $\mu>0$:
$$
\|Q_\mu(g,g,g) \|_{L^2(\sqrt{\omega}\dd\omega)} \lesssim \|g\|_{L^2(\sqrt{\omega}\dd\omega)}^3. 
$$ 

\noindent
\underline{\emph{We now treat the quadratic nonlinearities}.} The terms containing nonlinearities of the form 
$g_0g_i n_1n_2 n_3 (n_k^{-1} \pm n_\ell^{-1})$ for $i, k, \ell \in \{1,2,3\}$ can be treated by H\"{o}lder's since $g_0$ factors out. We show that for the first term for example (which contains a sum of $n_k$, $n_\ell$): 
\begin{align*}
& \left\| 
\iint_{\substack{\omega_1, \omega_2 \in (0,\infty)\\ \omega_1+\omega_2>\omega_0}} 
\frac{\text{min}(\sqrt{\omega_0},\sqrt{\omega_1},\sqrt{\omega_2},\sqrt{\omega_3})}{\sqrt{\omega_0}} 
    n_1n_2n_3 g_0 g_3 ( n_2^{-1} + n_1^{-1} ) \dd\omega_1\dd\omega_2
\right\|_{L^2(\sqrt{\omega})} \lesssim \\
& 
\| g_0 \|_{L^2(\sqrt{\omega})} 
\left\| 
\iint_{\substack{\omega_1, \omega_2 \in (0,\infty)\\ \omega_1+\omega_2>\omega_0}}  \frac{\text{min}(\sqrt{\omega_0},\sqrt{\omega_1},\sqrt{\omega_2},\sqrt{\omega_1+\omega_2-\omega_0})}{\sqrt{\omega_0}} 
 g_3  n_1n_3  \dd\omega_1\dd\omega_2
\right\|_{L_{\omega_0}^\infty} + \\& 
 \| g_0 \|_{L^2(\sqrt{\omega})} 
 \left\| 
 \iint_{\substack{\omega_1, \omega_2 \in (0,\infty)\\ \omega_1+\omega_2>\omega_0}}   \frac{\text{min}(\sqrt{\omega_0},\sqrt{\omega_1},\sqrt{\omega_2},\sqrt{\omega_1+\omega_2-\omega_0})}{\sqrt{\omega_0}}  
   g_3  n_2 n_3  \dd\omega_1\dd\omega_2
\right\|_{L_{\omega_0}^\infty}.
\end{align*}
We treat the first one since the second is identical by the symmetry $\omega_1\leftrightarrow \omega_2$.
So in order to bound uniformly-in-$\omega_0$ the first term, set
$$
K_1(\omega_0)
:= \iint_{\omega_1+\omega_2>\omega_0}
\frac{\text{min}(\sqrt{\omega_0},\sqrt{\omega_1},\sqrt{\omega_2},\sqrt{\omega_3})}{\sqrt{\omega_0}} n_1n_3 |g_3| \dd\omega_1 \dd \omega_2,
$$ and we make the change of variables
$\omega_3=\omega_1+\omega_2-\omega_0,$ and $\omega_2=\omega_0+\omega_3-\omega_1.$ Then 
\begin{align*} 
K_1(\omega_0) 
&= \int_0^\infty |g(\omega_3)|n_3
\left(
\int_0^{\omega_0+\omega_3}
\frac{\text{min}(\sqrt{\omega_0},\sqrt{\omega_1},\sqrt{\omega_2},\sqrt{\omega_3})}{\sqrt{\omega_0}} n_1 d\omega_1
\right) d\omega_3
\\
& \lesssim 
\int_0^\infty |g(\omega_3)|n_3 \left( \min\left(1,\sqrt{\frac{\omega_3}{\omega_0}}\right)
\int_0^{\omega_0+\omega_3}\frac{d\omega_1}{\omega_1+\mu} \right) \dd\omega_3
\\ & 
\lesssim_\mu 
\int_0^\infty |g(\omega_3)|n_3 
\left(
\min\left(1,\sqrt{\frac{\omega_3}{\omega_0}}\right)
\log(1+\omega_0+\omega_3)
\right) \dd \omega_3, 
\end{align*}
where we used that the min factor is always less than $\sqrt{\omega_3/\omega_0}$ and than $1$. 
Now uniformly in $\omega_0>0$, it holds that 
$$\min\left(1,\sqrt{\frac{\omega_3}{\omega_0}}\right)
\log(1+\omega_0+\omega_3)
\lesssim
1+\log(1+\omega_3) +\omega_3^{\delta},$$ 
for any fixed $\delta \in (0,1/2)$. 
Indeed, if $\omega_0 \le \omega_3$, this is immediate, while if
$\omega_0>\omega_3$, 
\begin{align*}
\sqrt{\frac{\omega_3}{\omega_0}}
\log(1+\omega_0+\omega_3)& \lesssim \sqrt{\frac{\omega_3}{\omega_0}} (1+\omega_0+\omega_3)^{\delta}\lesssim \sqrt{\frac{\omega_3}{\omega_0}}  (1+\omega_0)^{\delta}\\
&\lesssim \omega_3^{1/2}\omega_0^{-1/2} \omega_0^{\delta} \lesssim \omega_3^{1/2} \omega_3^{-1/2+\delta} \lesssim 1+ \omega_3^\delta. 
\end{align*}

Inserting this to the $K_1(\omega_0)$ term for $\delta=1/4$ and by Cauchy-Schwarz, we have
\begin{align*}
K_1(\omega_0)
&\lesssim_\mu
\int_0^\infty
|g(\omega_3)|
\frac{1+\log(1+\omega_3)+\omega_3^{\delta}}
{\omega_3+\mu}\dd \omega_3\\
&=
\int_0^\infty
|g(\omega_3)|\omega_3^{1/4}
\frac{1+\log(1+\omega_3)+\omega_3^{1/4}}
{(\omega_3+\mu)\omega_3^{1/4}}
\dd\omega_3
\\
&\le
\|g\|_{L^2(\sqrt{\omega}\,d\omega)}
\left(
\int_0^\infty
\frac{
\left(1+\log(1+\omega)+\omega^{1/4}\right)^2
}
{(\omega+\mu)^2\sqrt{\omega}}
\dd \omega
\right)^{1/2}.
\end{align*}
Since $\mu>0$, the last integral is finite. Eventually indeed 
$
\|K_1\|_{L^\infty_{\omega_0}}
\lesssim_\mu
\|g\|_{L^2(\sqrt{\omega}\dd\omega)}.$
Thus we conclude the desired $\|g\|_{L^2(\sqrt{\omega}\dd\omega)}^2$ bound for the first term. Similarly follow the rest terms where $g_0$ factors out.  

The terms containing a difference of $n_k$, $n_\ell$ for example estimating: 
\begin{align*}
& \left\| 
\iint_{\substack{\omega_1, \omega_2 \in (0,\infty)\\ \omega_1+\omega_2>\omega_0}} 
\frac{\text{min}(\sqrt{\omega_0},\sqrt{\omega_1},\sqrt{\omega_2},\sqrt{\omega_3})}{\sqrt{\omega_0}} 
    n_1n_2n_3 g_0 g_1 ( n_3^{-1} - n_2^{-1} ) \dd\omega_1\dd\omega_2
\right\|_{L^2(\sqrt{\omega})}.
\end{align*}
After factoring out $g_0$, the corresponding coefficient is bounded by
\begin{align*}
 M(\omega_0) &:=\int_0^\infty |g(\omega_1)|n_1
\min\left(1,\sqrt{\frac{\omega_1}{\omega_0}}\right)
\left(\int_{(\omega_0-\omega_1)_+}^\infty
\frac{|\omega_1-\omega_0|}{(\omega_2+\mu)(\omega_3+\mu)}\dd\omega_2\right)\dd\omega_1 \\ &=\int_0^\infty |g(\omega_1)|n_1
\min\left(1,\sqrt{\frac{\omega_1}{\omega_0}}\right)
\log\left(1+\frac{|\omega_1-\omega_0|}{\mu}\right)\dd\omega_1, 
\end{align*}
and for every fixed $\delta\in(0,1/2)$,
$$
\min\left(1,\sqrt{\frac{\omega_1}{\omega_0}}\right)
\log\left(1+\frac{|\omega_1-\omega_0|}{\mu}\right)
\lesssim_{\mu,\delta}1+\log(1+\omega_1)+\omega_1^\delta,
$$
uniformly in $\omega_0>0$. Indeed, if $\omega_0 \leq \omega_1$, it follows directly since $|\omega_1-\omega_0|\leq \omega_1$, while if $\omega_0>\omega_1$ and $\omega_0\leq 1$, the left-hand side is uniformly bounded, and if $\omega_0 >\max\{ 1,\omega_1 \}$, one uses
$$
\sqrt{\frac{\omega_1}{\omega_0}}
\log\left(1+\frac{\omega_0-\omega_1}{\mu}\right)
\lesssim_{\mu,\delta}
\omega_1^{1/2}\omega_0^{-1/2+\delta}
\lesssim 1+\omega_1^\delta.
$$
As above take $\delta=1/4$, and apply Cauchy-Schwarz to conclude that $
\|M\|_{L^\infty_{\omega_0}}
\lesssim_\mu \|g\|_{L^2(\sqrt{\omega}\dd\omega)}.
$
The remaining terms of this form are identical and thus ommitted.

We now turn to the quadratic terms not containing $g_0$, where we again employ a dyadic decomposition argument. After triangle inequality each of these terms have integral kernel of the form $$\frac{\text{min}(\sqrt{\omega_0},\sqrt{\omega_1},\sqrt{\omega_2},\sqrt{\omega_1+\omega_2-\omega_0})}{\sqrt{\omega_0}} n_in_j |g_k g_\ell|.$$
We again show the claim for one representative estimate, for instance
\begin{align*}
& T_2(g,g)(\omega_0):=\iint_{\omega_1+\omega_2>\omega_0}
\frac{\text{min}(\sqrt{\omega_0},\sqrt{\omega_1},\sqrt{\omega_2},\sqrt{\omega_1+\omega_2-\omega_0})}{\sqrt{\omega_0}}n_1n_3 g_1g_2 \dd \omega_1\dd \omega_2. 
\end{align*}

We will use the same notation, i.e. $I_j=\{\omega \in (2^j, 2^{j+1}]\}$ and the estimates on the $L^2, L^1$ norms \eqref{eq:estimL2_1} and \eqref{eq:estimL2_2}.

Assume first that $j_1\geq j_2$. The integral kernel is bounded by 
$$\text{integral kernel } \lesssim \min\left(1, 2^{(\min(j_2,j_3) - j_0)/2} \right) 2^{-j_1-j_3}$$ and for the localised term we apply a Young's inequality for convolutions: 
 \begin{align*}
     \left\| 
     \iint_{\omega_1+\omega_2\geq \omega_0}
     g_{j_1}g_{j_2} 1_{\omega_3 \in I_{j_3}} \dd \omega_1\dd\omega_2
     \right\|_{L^2} 
     &\lesssim \|g_{j_1}\|_{L^2} \|g_{j_2}\|_{L^1} 2^{j_3}
     \end{align*}
     so that altogether yield 
     \begin{align*}
         &\|1_{I_{j_0}} T_2(g_{j_1},g_{j_2})\|_{L^2(\sqrt{\omega_0})}  \lesssim 2^{j_0/4} \min\left(1, 2^{(\min(j_2,j_3) - j_0)/2} \right) 2^{-j_1-j_3}\|g_{j_1}\|_{L^2} \|g_{j_2}\|_{L^1} 2^{j_3}\\
         &\hskip 1.5cm \lesssim 
         2^{j_0/4} \min\left(1, 2^{(\min(j_2,j_3) - j_0)/2} \right) 2^{-5j_1/4}2^{j_2/4}\|g_{j_1}\|_{L^2(\sqrt{\omega}\dd\omega)} \|g_{j_2}\|_{L^2(\sqrt{\omega}\dd\omega)}. 
     \end{align*}
     For the full term, we glue the localised terms together: 
\begin{align*}
&\|T_2(g,g) \|_{L^2(\sqrt{\omega}\dd\omega)}\lesssim 
\Bigg[\sum_{j_0\geq 0} \Bigg( \sum_{j_1\geq j_2,j_3} 2^{j_0/4}\min\left(1, 2^{(\min(j_2,j_3) - j_0)/2} \right)\times  \\
& \hskip 4.5cm \times 2^{-5j_1/4}2^{j_2/4}\|g_{j_1}\|_{L^2(\sqrt{\omega}\dd\omega)} \|g_{j_2}\|_{L^2(\sqrt{\omega}\dd\omega)}  \Bigg)^2 \Bigg]^{1/2}.
\end{align*} 
 Now we split again into cases: 

      \noindent
        \underline{Case 1: If $j_0 \geq j_1 - \text{Const.}$:} The resonant relation forces then that $j_0 \sim j_1$ (where we argue verbatim as in the cubic case). So there are only $\mathcal{O}(1)$ choices of $j_0$ for each $j_1$. 

Using the fixed dyadic estimate, we get
\begin{align*}
\|T_2(g,g)\|_{L^2(\sqrt{\omega}\dd\omega)}
&\lesssim
\sum_{j_1\ge j_2}
2^{j_1/4}
2^{-5j_1/4}
2^{j_2/4}
\|g_{j_1}\|_{L^2(\sqrt{\omega}\dd\omega)} \|g_{j_2}\|_{L^2(\sqrt{\omega}\dd\omega)} 
\\
&\qquad\qquad\times
\sum_{j_3\le j_1+\text{Const.}}
\min\left(1,2^{(\min(j_2,j_3)-j_1)/2}\right).
\end{align*}
Here we used $j_0\sim j_1$. We now estimate the
$j_3$-sum. Splitting into
$j_3\le j_2$ and $j_2<j_3\le j_1+\text{Const.}$, we obtain
\begin{align*}
\sum_{j_3\le j_1+\text{Const.}}
\min\left(1,2^{(\min(j_2,j_3)-j_1)/2}\right)
&\lesssim
\sum_{j_3\le j_2}2^{(j_3-j_1)/2}
+ \sum_{j_2<j_3\le j_1+\text{Const.}}2^{(j_2-j_1)/2}
\\
&\lesssim
(1+j_1-j_2)2^{-(j_1-j_2)/2}.
\end{align*}
Therefore
\begin{align*}
&\|T_2(g,g)\|_{L^2(\sqrt{\omega}\dd\omega)}
\lesssim
\sum_{j_1\ge j_2}
2^{j_1/4} 2^{-5j_1/4} 2^{j_2/4}
(1+j_1-j_2)2^{-(j_1-j_2)/2}\times \\
&\hskip 6.5cm \times \|g_{j_1}\|_{L^2(\sqrt{\omega}\dd\omega)} \|g_{j_2}\|_{L^2(\sqrt{\omega}\dd\omega)} 
\\
&\hskip 0.5cm=
\sum_{j_1\ge j_2}
(1+j_1-j_2) 2^{-j_1} 2^{j_2/4} 2^{-(j_1-j_2)/2}
\|g_{j_1}\|_{L^2(\sqrt{\omega}\dd\omega)} \|g_{j_2}\|_{L^2(\sqrt{\omega}\dd\omega)}.
\end{align*}
and 
\begin{align*}
\|T_2(g,g)\|_{L^2(\sqrt{\omega}\dd\omega)}
&\lesssim
\sum_{j_1\ge j_2}
(1+j_1-j_2)2^{-3(j_1-j_2)/4}
\|g_{j_1}\|_{L^2(\sqrt{\omega}\dd\omega)} \|g_{j_2}\|_{L^2(\sqrt{\omega}\dd\omega)}\\
&\lesssim
\|g\|_{L^2(\sqrt{\omega}\dd\omega)}^2, 
\end{align*}
since $(1+j_1-j_2)2^{-3(j_1-j_2)/4}$ is summable and by applying Cauchy-Schwarz and Young's inequality. 

\noindent
  \underline{Case 2: If $j_0 < j_1 - \text{Const.}$:} The resonant relation forces then that $j_3 \sim j_1$ (where we argue verbatim as in the cubic case). 
        Then \begin{align*}
           &\|T_2(g,g) \|_{L^2(\sqrt{\omega}\dd\omega)}\lesssim \\
& \sum_{j_1\geq j_2} 2^{-5j_1/4}2^{j_2/4}\|g_{j_1}\|_{L^2(\sqrt{\omega}\dd\omega)} \|g_{j_2}\|_{L^2(\sqrt{\omega}\dd\omega)} \left(\sum_{j_0<j_1 - \text{Const.}}  2^{j_0/2}\right)^{1/2} \lesssim  \\
& \sum_{j_1\geq j_2} 2^{-5j_1/4}2^{j_2/4}2^{j_1/4}\|g_{j_1}\|_{L^2(\sqrt{\omega}\dd\omega)} \|g_{j_2}\|_{L^2(\sqrt{\omega}\dd\omega)}=\\
& 
\sum_{j_1\geq j_2} 2^{-3j_1/4}2^{-(j_1-j_2)/4} \|g_{j_1}\|_{L^2(\sqrt{\omega}\dd\omega)} \|g_{j_2}\|_{L^2(\sqrt{\omega}\dd\omega)} 
\lesssim
\\ &  
\sum_{j_1\geq j_2} 2^{-(j_1-j_2)/4} \|g_{j_1}\|_{L^2(\sqrt{\omega}\dd\omega)} \|g_{j_2}\|_{L^2(\sqrt{\omega}\dd\omega)}  \lesssim \|g\|_{L^2(\sqrt{\omega})}^2
\end{align*}
where in the beginning we bounded $\sum_{j_0<j_1 - \text{Const.}}  2^{j_0/2}\lesssim 2^{j_1/2}$ and in the end that $2^{-(j_1-j_2)/4} \in \ell^1$, i.e. it is summable.

Again we have treated the higher-than-$1$ frequencies case, but the bounded case is immediate as explained in the cubic case. 

Finally let us comment on two things: first that we treated the case of one function $g$ in all the inputs, in order to keep the notation in the proof as simple as possible. To show it rather in the multi-linear case for different functions $g, h, \ell$ as stated in the proposition, it follows identically by placing the corresponding dyadic norms in the same estimates. 

And second, we first should prove the estimates for smooth compactly supported functions $g,h,\ell$. Since $C_c^\infty((0,\infty))$ is dense in $L^2(\sqrt{\omega}\dd\omega)$, the preceding bounds imply that $\Gamma_\mu$ and $Q_\mu$ extend uniquely to bounded multilinear maps
$ \Gamma_\mu: L^2(\sqrt{\omega}\dd\omega)^2
\to L^2(\sqrt{\omega}\dd\omega),
$
and
$Q_\mu: L^2(\sqrt{\omega}\dd\omega)^3
\to L^2(\sqrt{\omega}\dd\omega).$
The same estimates remain valid for these extensions. 
\end{proof}
We are thus ready to prove local well-posedness as follows. 

\begin{theorem}[Local Well-Posedness in the RJ perturbative regime]\label{Theo: LWP}
   Let $\mu>0$. The nonlinear equation \eqref{eq:NL evolution} is locally well-posed in $C^1([0,T];L^2(\sqrt{\omega}\dd\omega))$ for some $T= T(\|g_0\|_{L^2(\sqrt{\omega}\dd\omega)}, \mu)$.

Also, the solution extends uniquely to a maximal interval
$[0,T_{\max})$ for $T_{\max}\in(0,\infty]$, and satisfies the blow-up criterion
$$ T_{\max}<\infty \quad\Longrightarrow\quad
    \limsup_{t\uparrow T_{\max}}\|g(t)\|_{L^2(\sqrt{\omega}\dd\omega)}=\infty.$$
\end{theorem}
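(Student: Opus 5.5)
The plan is a standard Picard iteration in the Banach space $X_T := C([0,T];L^2(\sqrt{\omega}\dd\omega))$ with the sup-in-time norm. Since, by the boundedness proposition of Section \ref{sec:properties of L}, $L_\mu\in\mathscr L(L^2(\sqrt\omega\dd\omega))$, I would not even need the semigroup structure: the whole right-hand side
$$N(g):=L_\mu g+\Gamma_\mu(g,g)+Q_\mu(g,g,g)$$
is a polynomial map on $L^2(\sqrt\omega\dd\omega)$. By Proposition \ref{prop: control NL} and multilinearity, it is locally Lipschitz with
$$\|N(g)-N(h)\|\le \bigl(C_\mu+2C_\mu R+3C_\mu R^2\bigr)\|g-h\|$$
for $\|g\|,\|h\|\le R$. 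I would therefore view \eqref{eq:NL evolution} as the ODE $g'=N(g)$ in a Banach space. Local existence and uniqueness follow from the Cauchy--Lipschitz (Picard--Lindel\"of) theorem applied to the integral map
$$\Phi(g)(t)=g_0+\int_0^t N(g(s))\dd s.$$

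Concretely, I would take $R=2\|g_0\|$ (or $R=\|g_0\|+1$ to handle $g_0=0$) and the closed ball $B_R\subset X_T$. Then $\|\Phi(g)(t)\|\le \|g_0\|+T(C_\mu R+C_\mu R^2+C_\mu R^3)$, so $\Phi(B_R)\subset B_R$ once $T\lesssim_\mu \min(1,R^{-2})$ is small enough. The same bound gives contraction with constant $T(C_\mu+2C_\mu R+3C_\mu R^2)<1/2$. Banach's fixed point theorem yields a unique $g\in B_R$ with $T=T(\|g_0\|,\mu)$. Since $s\mapsto N(g(s))$ is continuous into $L^2(\sqrt\omega\dd\omega)$, the integral equation shows that $g\in C^1([0,T];L^2(\sqrt\omega\dd\omega))$ solves the equation strongly. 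Conversely, any $C^1$ solution satisfies the integral equation. Uniqueness among all $C^1$ solutions, not just those in $B_R$, then follows from the local Lipschitz bound and Gronwall on any interval where both solutions are bounded by a common $R'$. Continuous dependence on data follows from the same Gronwall estimate.

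For the maximal solution, I would define $T_{\max}$ as the supremum of the times of existence and glue solutions using uniqueness. For the blow-up criterion, suppose $T_{\max}<\infty$ but $\limsup_{t\uparrow T_{\max}}\|g(t)\|<\infty$. Then $\sup_{[0,T_{\max})}\|g(t)\|\le M$ for some $M$, because $g$ is continuous on each $[0,T]$ with $T<T_{\max}$ and bounded near $T_{\max}$. The existence time $T(M,\mu)$ depends only on an upper bound for the data norm, so restarting from $g(T_{\max}-T(M,\mu)/2)$ extends the solution beyond $T_{\max}$, which is a contradiction. Alternatively, $\|g'\|\le C(M)$ makes $g$ Cauchy as $t\uparrow T_{\max}$, so it extends.

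The only genuinely nontrivial input is the multilinear bound of Proposition \ref{prop: control NL}, which is already established, together with boundedness of $L_\mu$. The one point I would check carefully is that the extensions of $\Gamma_\mu$ and $Q_\mu$ by density are the correct interpretation of the nonlinear terms, so that the fixed point is a solution of \eqref{eq:NL evolution}. The individual integrals diverge, so the equation must be understood through these extended bounded multilinear maps, consistent with the cancellation discussion in the introduction. I would state this explicitly as the definition of solution and otherwise expect no real obstacle.
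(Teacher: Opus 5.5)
Your proposal is correct and follows the paper's proof. Boundedness of $L_\mu$ together with the multilinear bounds of Proposition~\ref{prop: control NL} makes the right-hand side locally Lipschitz, and a Banach fixed point for $g\mapsto g_0+\int_0^t\bigl[L_\mu g+\Gamma_\mu(g,g)+Q_\mu(g,g,g)\bigr](s)\dd s$ on a ball of $C([0,T];L^2(\sqrt{\omega}\dd\omega))$, iterated, gives the local solution, the maximal interval and the blow-up alternative. You also fill in details the paper leaves implicit: the Gronwall step giving uniqueness among all $C^1$ solutions (not only those in $B_R$) and continuous dependence, and the explicit restart argument using an existence time that depends only on an upper bound for the data.
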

\begin{proof}
We remind that the equation reads
$$
\partial_t g=L_\mu g+\Gamma_\mu(g,g)+Q_\mu(g,g,g).
$$
and $L_\mu: L^2(\sqrt{\omega}) \to L^2(\sqrt{\omega})$ is a bounded operator.  
First the nonlinear map $\Gamma_\mu+Q_\mu$ is locally Lipschitz on $L^2(\sqrt{\omega})$: Indeed, let $g,h\in L^2(\sqrt{\omega} d\omega)$. Then we check directly that  
$$
\Gamma_\mu(g,g)-\Gamma_\mu(h,h)
=
\Gamma_\mu(g-h,g)+\Gamma_\mu(h,g-h), 
$$
and thus from Prop. \ref{prop: control NL}
\begin{align} 
\label{Lip1}
\|\Gamma_\mu(g,g)-\Gamma_\mu(h,h)\|_{L^2(\sqrt{\omega} \dd\omega ) }
\leq
C_\mu \Big(\|g\|_{L^2(\sqrt{\omega} \dd\omega)}+\|h\|_{L^2(\sqrt{\omega}\dd\omega)} \Big)
\|g-h\|_{L^2(\sqrt{\omega}\dd\omega)}.
\end{align}
Similarly, we check that 
\begin{align*} 
Q_\mu(g,g,g)-Q_\mu(h,h,h) = Q_\mu(g-h,g,g)
+ Q_\mu(h,g-h,g) + Q_\mu(h,h,g-h),
\end{align*}
and therefore from Prop. \ref{prop: control NL}, 
\begin{align} 
\label{Lip2}
\|Q_\mu(g,g,g)-Q_\mu(h,h,h)\|_{L^2(\sqrt{\omega})}
\leq
C_\mu
\left(
\|g\|_{L^2(\sqrt{\omega})}^2+\|g\|_{L^2(\sqrt{\omega})}\|h\|_{L^2(\sqrt{\omega})}+\|h\|_{L^2(\sqrt{\omega})}^2
\right)
\|g-h\|_{L^2(\sqrt{\omega})}.
\end{align}
So, if we define the ball
$$
\{g\in L^2(\sqrt{\omega}\dd\omega):\| g\|_{L^2(\sqrt{\omega} \dd\omega)} \leq R\},
$$
we have
\begin{align} \label{eq:NL_Lipschitz}
\|\mathcal (\Gamma_\mu + Q_\mu)(g)-(\Gamma_\mu +Q_\mu)(h)\|_{L^2(\sqrt{\omega}\dd\omega)}
\le
C_\mu (R+R^2)\|g-h\|_{L^2(\sqrt{\omega}\dd\omega)}.
\end{align}
Now fix $g_0\in L^2(\sqrt{\omega}\dd\omega)$, and for $T>0$, let $ X_T:=C([0,T];L^2(\sqrt{\omega}\dd \omega))$ with the norm $ 
\|g\|_{X_T}:=\sup_{0\le t\leq T}\|g(t)\|_{L^2(\sqrt{\omega} \dd\omega)}.$
Then we choose $ R = 2 \|g_0\|_{L^2(\sqrt{\omega}\dd\omega)} + 1$ and define the (closed) ball
$$
B_R:=\{g\in X_T:\|g\|_{X_T}\le R\}.
$$

Then the map 
$$ \Phi(g)(t) := g_0+\int_0^t
\left[ L_\mu g(s)+\Gamma_\mu(g(s),g(s))+Q_\mu(g(s),g(s),g(s))
\right] ds.
$$ satisfies for $T$ sufficiently small: is a contraction and  $\Phi(B_R)\subset B_R.$ 
Indeed for $g\in B_R$, the nonlinear estimates in Prop. \ref{prop: control NL}, and the boundedness
of $L_\mu$, give
$$  \|\Phi(g)\|_{X_T}\leq
    \|g_0\|_{L^2(\sqrt{\omega})} +
    T\Big[ \|L_\mu\|_{\mathscr{L}(L^2(\sqrt{\omega}))}R  +
        C_\mu(R^2+R^3)\Big].$$
Thus $\Phi(B_R)\subset B_R$, provided  
 $T\Big[\|L_\mu\|_{\mathscr{L}(L^2(\sqrt{\omega}))}R + C_\mu(R^2+R^3)\Big]
    \leq R-\|g_0\|_{L^2(\sqrt{\omega})}$. 
Similarly for the contraction: for $g,h\in B_R$, using
\eqref{eq:NL_Lipschitz}, we obtain
$$ \|\Phi(g)-\Phi(h)\|_{X_T} \leq
T \Big[ \|L_\mu\|_{\mathscr{L}(L^2(\sqrt{\omega}))}+ C_\mu(R+R^2)\Big]\|g-h\|_{X_T}.
$$
To conclude that $\Phi$ is a contraction on $B_R$, we choose $T>0$ so that 
\begin{equation}\label{eq:condition_contraction}
    T\Big[ \|L_\mu\|_{\mathscr L(L^2(\sqrt{\omega}))}+
        C_\mu(R+R^2)\Big]\leq\frac{1}{2},
\end{equation}

Thus existence of a unique solution in $C^1([0,T];L^2(\sqrt{\omega}))$ follows immediately from Banach fixed point theorem. Finally, by repeatedly applying the local result, the solution extends uniquely to a maximal interval $[0,T_{\max})$ from where we conclude the blow-up alternative.
\end{proof}

We finish this section by providing propagation of positivity for our local solutions. 

\begin{proposition}[Propagation of nonnegativity]
\label{prop:prop_positivity}
Let $\mu>0$, and $ g\in C^1([0,T];L^2(\sqrt{\omega} \dd\omega))
$ 
be a real-valued solution of \eqref{eq:NL evolution}, i.e. 
$$\partial_t g = \mathcal{F}_\mu(g) :=
 L_\mu g+ \Gamma_\mu(g,g) +Q_\mu(g,g,g)
$$
For solutions of the form $f(t,\omega)= n_\mu(\omega)(1+g(t,\omega))$, we have that if 
$ f(0,\omega)\geq0 $ for a.e.$\omega>0$, then  
$ f(t,\omega)\geq 0$ for a.e. $\omega>0$
for every $t\in[0,T_{max})$.
\end{proposition}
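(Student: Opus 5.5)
The plan is to work directly with $f=n_\mu(1+g)$ and use the gain--loss structure of the collision operator. Write $\mathcal C(f)=G(f)-f\,a(f)$, where the loss part is
\begin{align*}
f\,a(f)(\omega):=f(\omega)\iint_{D(\omega)}\Omega\,(f_2f_3+f_1f_3-f_1f_2)\dd\omega_1\dd\omega_2 ,
\end{align*}
and the gain part is $G(f)=\iint_{D(\omega)}\Omega f_1f_2f_3$. This follows the sign convention of \eqref{eq:NL evolution1}, which I would double check, so that the terms containing $f$ are the loss terms and $f_1f_2f_3$ is the gain term.

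The key observation is that $G(f)\ge0$ whenever $f\ge 0$. Positivity then follows from a Duhamel/integrating-factor representation
\begin{align*}
f(t)=f_0\,e^{-\int_0^t a(s)\dd s}+\int_0^t e^{-\int_s^t a(\tau)\dd\tau}G(f(s))\dd s,
\end{align*}
provided $a$ is finite pointwise. In general it is not, because individually these integrals diverge at high frequencies. So I would phrase everything in the $g$ variable and avoid splitting $n_\mu$ against $g$ in a divergent way.

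To make this rigorous, I would use a monotone iteration that is compatible with the fixed point of Theorem \ref{Theo: LWP}. The cleanest route is to consider the modified equation with the nonlinearity evaluated at $f^+=\max(f,0)$ inside the gain term, that is, $\partial_t f=G(f^+)-f\,a(f)$ written relative to $n_\mu$. Then I test against $f^-$ with the weight $n_\mu^{-2}\sqrt\omega$, i.e. against $g$-type quantities in $L^2(\sqrt\omega)$. Concretely, set $h=(1+g)^-=n_\mu^{-1}f^-$ and compute
\begin{align*}
\frac{d}{dt}\|h\|^2_{L^2(\sqrt\omega)}=-2\langle h,\ n_\mu^{-1}\partial_t f\rangle .
\end{align*}
The gain contribution $-2\langle h,n_\mu^{-1}G(f^+)\rangle\le0$. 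For the loss contribution, $f\,a(f)$ paired with $h$ gives $2\langle h^2, a(f)\rangle$ up to sign, and the aim is a bound of the form $\le C(\|g\|)\|h\|^2$. That yields Gronwall with $h(0)=0$, hence $h\equiv0$. Finally, uniqueness from Theorem \ref{Theo: LWP} (the modified and original equations coincide once $f\ge0$) identifies the solutions, on each interval and then up to $T_{\max}$.

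The main obstacle is controlling $a(f)$, since it is not bounded for RJ-type $f$: the terms $\iint\Omega f_2f_3$ diverge logarithmically when $f\sim n_\mu$. What rescues the argument is that only the combination $f_2f_3+f_1f_3-f_1f_2$ enters. Expanding $f_i=n_i(1+g_i)$, the $n_\mu$-part of $a$ equals $n_\mu^{-1}\iint\Omega n_1n_2n_3\,n_0^{-1}$-type terms, exactly the multiplier $A_\mu$, which is bounded by \eqref{eq: unif bound}. The remaining parts are linear and quadratic in $g$, with kernels already estimated in $L^\infty_{\omega_0}$ in the proof of Proposition \ref{prop: control NL} (the $K_1$ and $M$ bounds, and the H\"older bound for the cubic terms with $g_0$ factored out). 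So I expect $\|a(f)\|_{L^\infty}\lesssim_\mu 1+\|g\|+\|g\|^2$, which gives the needed $\langle h^2,a\rangle\lesssim\|h\|^2$.

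For the rigorous justification, with $g\in C^1$ in $L^2(\sqrt\omega)$ and $a$ merely bounded, I would first regularise by truncating the data to $C_c^\infty$ and then pass to the limit using the Lipschitz bounds \eqref{Lip1} and \eqref{Lip2}.
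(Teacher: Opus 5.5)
\paragraph{The gap: the modified equation has no solution in $L^2(\sqrt\omega\dd\omega)$.}
Your argument rests on solving the modified equation and then invoking uniqueness, and that step fails. In the $g$ variable the modified vector field is
$n_\mu^{-1}G(f^+)-(1+g)\,a(f)=\mathcal F_\mu(g)+n_\mu^{-1}\big[G(f^+)-G(f)\big]$. The extra term is not in $L^2(\sqrt\omega\dd\omega)$ as soon as $h=(1+g)^-\neq 0$. Its leading part is $n_0^{-1}\iint\Omega\, n_1n_2n_3(h_1+h_2+h_3)\dd\omega_1\dd\omega_2$. For $h$ a bump near $\omega\sim 1$, this is bounded below by $c\log\omega/\sqrt{\omega}$ as $\omega\to\infty$, which is not square integrable against $\sqrt\omega\dd\omega$.

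The cause is the cancellation you already used for $a$. The gain and loss parts of $\mathcal F_\mu$ are separately not $L^2(\sqrt\omega\dd\omega)$-valued: already $n_\mu^{-1}G(n_\mu)=a(n_\mu)=a_\mu$ is bounded but not in that space. Their linearisations are also separately unbounded; only the combination $L_\mu$ is bounded. Evaluating the gain at $f^+$ and the loss at $f$ breaks this combination.

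Consequently:
\begin{itemize}
\item the fixed point of Theorem~\ref{Theo: LWP} cannot be run for the modified problem;
\item the identity $\frac{d}{dt}\|h\|^2=-2\langle h,\partial_t g\rangle$ cannot be justified on it;
\item there is no modified solution for uniqueness to identify with $g$.
\end{itemize}
The unspecified ``monotone iteration'' does not get around this.

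Applying your computation directly to the true solution does not rescue it either. There the gain is $G(f)$, which has no sign. The correction $\langle h,n_\mu^{-1}[G(f^+)-G(f)]\rangle$ involves the same positive, unbounded kernel, so none of the available estimates bound it by $C\|h\|^2$.

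Your bound $\|a(f)\|_{L^\infty}\lesssim_\mu 1+\|g\|+\|g\|^2$ is correct. It follows from $a(n_\mu)=a_\mu$ together with the $K_1$, $M$ and H\"older bounds.

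\paragraph{The missing idea: cut off the whole operator, not only the gain.}
Set $\overline g=\max\{g,-1\}$ and $\overline f=n_\mu(1+\overline g)\ge 0$. Then $\overline f(\omega_0)=0$ wherever $h>0$, so every loss term of $\mathcal F_\mu(\overline g)$ vanishes there, not merely stays bounded. Hence $\langle \mathcal F_\mu(\overline g),h\rangle\ge0$. The paper proves this with the frequency cutoff $X_N$, precisely because the gain alone is not in $L^2(\sqrt\omega\dd\omega)$.

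The remainder is handled by the Lipschitz bounds \eqref{Lip1}--\eqref{Lip2}, which keep all cancellations intact. Since $g-\overline g=-h$, with norms in $L^2(\sqrt\omega\dd\omega)$,
\[
|\langle\mathcal F_\mu(g)-\mathcal F_\mu(\overline g),h\rangle|\le C_{\mu,R}\|g-\overline g\|\,\|h\|=C_{\mu,R}\|h\|^2 .
\]
Gronwall then closes directly on the given solution. No modified equation and no $L^\infty$ bound on $a(f)$ are needed. If you do want a modified equation, $\partial_t g=\mathcal F_\mu(\max\{g,-1\})$ works, since it is locally Lipschitz on $L^2(\sqrt\omega\dd\omega)$.
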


Let us show first the following lemma that we are going to use in the proof. 

\begin{lemma}
\label{lemm:positiv_F_mu}
Let $h_1,h_2 \in L^2(\sqrt{\omega}\dd\omega)$ be functions such that
$
h_1\geq-1$, $ h_2\geq0,$ and $ h_2(1+h_1)=0$ a.e. Then
$$
\big\langle
\mathcal{F}_\mu(h_1),h_2
\big\rangle_{L^2(\sqrt{\omega}\dd\omega)}
\geq 0.
$$
\end{lemma}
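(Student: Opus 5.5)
The plan is to go back to the original collision operator rather than to the splitting $L_\mu+\Gamma_\mu+Q_\mu$. Set $F:=n(1+h_1)\ge 0$. By construction of \eqref{eq:NL evolution3}, for such $h_1$ we have $\mathcal F_\mu(h_1)=n^{-1}\,\mathcal C(F)$, where $\mathcal C$ is the cubic collision operator of \eqref{eq:NL evolution1}. The identity $\mathcal{F}_\mu(h_1)=n^{-1}\mathcal C(F)$ holds first for $h_1\in C_c^\infty$ and then for all $h_1\in L^2(\sqrt\omega\dd\omega)$ by the bounded multilinear extensions of Proposition \ref{prop: control NL} and the boundedness of $L_\mu$. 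Since $h_2\ge0$, it suffices to show that $\mathcal C(F)(\omega)\ge 0$ for a.e. $\omega$ in the set $Z:=\{h_2>0\}$. On $Z$ we have $h_1=-1$, i.e. $F(\omega)=0$.

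The key pointwise observation is the following. In $q(F)=FF_2F_3+FF_1F_3-F_1F_2F-F_1F_2F_3$, every term except the last contains the factor $F=F(\omega)$, which vanishes on $Z$. So on $Z$,
$$\mathcal C(F)(\omega)=\iint_{D(\omega)}\Omega\,F_1F_2F_3\dd\omega_1\dd\omega_2\ \ge 0.$$
The catch is that the split of $q(F)$ into its four terms is only formal: for $F$ close to RJ the individual pieces diverge, as stressed in the introduction. This is the main obstacle. The approach is to carry out the splitting on the perturbative side instead. Write $1+h_1=:u$, so $F=nu$. Expand $n^{-1}q(nu)$ as
$$n^{-1}\prod_{i=1}^3 n_i\Big[\tfrac{n}{n_1}u\,u_2u_3+\tfrac{n}{n_2}u\,u_1u_3-\tfrac{n}{n_3}u\,u_1u_2-u_1u_2u_3\Big],$$
which is exactly the structure of $Q_\mu$ with $g$ replaced by $u$. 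The difficulty is that $u\notin L^2$ because of the constant $1$.

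To get around this, I would regularise. Take truncations $h_1^{(k)}$ with $h_1^{(k)}\to h_1$ in $L^2(\sqrt\omega\dd\omega)$, built so that $h_1^{(k)}=-1$ on $Z\cap[1/k,k]$ and $h_1^{(k)}\ge -1$. For instance, use $h_1^{(k)}=h_1\chi_{[1/k,k]}$ smoothed only off $Z$, or simply $h_1\chi_{[1/k,k]}$ itself, since $C_c^\infty$ was never needed, only compact support in $(0,\infty)$ and boundedness after a further cutoff $\max(h_1,-1)\wedge k$. For such compactly supported, bounded $h_1^{(k)}$, the function $F^{(k)}=n(1+h_1^{(k)})$ agrees with $n$ outside $[1/k,k]$. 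The integral $\mathcal C(F^{(k)})$ can then be written as $\mathcal C(n)=0$ plus terms each containing at least one factor $h^{(k)}_i$, supported in a compact set. Each of these terms is absolutely convergent by the kernel bounds used in the boundedness proposition for $L_\mu$ and in the lemma on $K_\mu f\to K_0 f$. With absolute convergence in hand, regrouping is justified, and on $Z\cap[1/k,k]$ we get $\mathcal C(F^{(k)})(\omega)=\iint\Omega F^{(k)}_1F^{(k)}_2F^{(k)}_3\ge0$. Here a truncated $q$-difference argument is needed: one has to check that the divergent RJ parts cancel identically in the expansion. I expect this bookkeeping to be the technical heart of the proof.

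Finally, pass to the limit. We have $\langle\mathcal F_\mu(h_1^{(k)}),h_2\chi_{[1/k,k]}\rangle\ge0$, and the local Lipschitz bound \eqref{eq:NL_Lipschitz} gives $\mathcal F_\mu(h_1^{(k)})\to\mathcal F_\mu(h_1)$ in $L^2(\sqrt\omega\dd\omega)$. Also $h_2\chi_{[1/k,k]}\to h_2$, so the inequality persists in the limit. This yields $\langle\mathcal F_\mu(h_1),h_2\rangle\ge0$.
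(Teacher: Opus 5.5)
Your route differs from the paper's. The paper truncates the collision kernel with $X_N=\prod_{i=0}^3\chi_N(\omega_i)$, $\operatorname{supp}\chi_N\subset[N^{-1},N]$. With that cutoff all four pieces of $q$ are absolutely integrable, so the pointwise sign argument applies directly to $\mathcal F_{\mu,N}(h_1)$ for arbitrary $h_1\in L^2(\sqrt\omega\dd\omega)$. The price is the limit $N\to\infty$, which needs Lipschitz bounds for $\mathcal F_{\mu,N}$ uniform in $N$, plus convergence $\mathcal F_{\mu,N}\to\mathcal F_\mu$ on $C_c^\infty$. You truncate the function instead, taking $h_1^{(k)}=\min(h_1,k)\chi_{[1/k,k]}$, which keeps $h_1^{(k)}=-1$ on $Z\cap[1/k,k]$. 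Your limit step then only uses the Lipschitz bound for $\mathcal F_\mu$ that is already proved. That part is correct, including cutting $h_2$ to $[1/k,k]$.

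The gap is the step you defer as bookkeeping, and the justification you give for it is false. Compact support of $h_1^{(k)}$ only restricts the variable at which it is evaluated. A monomial of $n^{-1}q(n(1+h))$ whose $h$-factors do not confine both integration variables is typically not absolutely integrable, and the divergences are not confined to the pure RJ part. Examples:
\begin{itemize}
\item The monomial proportional to $h(\omega)\,n_2n_3$, coming from $FF_2F_3$, has $\iint_{D(\omega)}\Omega\, n_2n_3\dd\omega_1\dd\omega_2=\infty$; already the $\omega_1$-integral of $(\omega_1+\omega_2-\omega+\mu)^{-1}$ diverges.
\item The monomials $h_1n_1n_2$, $h_1n_1n_3$, $h_0h_1n_1n_2$ and $h_0h_1n_1n_3$ diverge logarithmically in $\omega_2$, even though $\omega_1\in[1/k,k]$.
\end{itemize}
So the regrouping cannot be justified monomial by monomial.

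What works is to group exactly as in the definitions of $L_\mu$, $\Gamma_\mu$, $Q_\mu$, using $n^{-1}+n_3^{-1}=n_1^{-1}+n_2^{-1}$ on $D(\omega)$:
\begin{itemize}
\item the coefficient of $h_0$ becomes $n_1n_2n_3/n$, which is integrable by the uniform bound in the boundedness proof of $L_\mu$;
\item the coefficient of $h_1$ becomes $n_2n_3$;
\item the coefficient of $h_0h_1$ becomes $n_1(n_2-n_3)=(\omega_1-\omega)\,n_1n_2n_3$;
\item the $h_2$ terms are handled the same way;
\item every other monomial confines both integration variables (for the $h_3$ ones use $\omega_1\le\omega+\omega_3$).
\end{itemize}
For $h$ bounded with support in $[1/k,k]$, each grouped integrand is then absolutely integrable at every fixed $\omega$. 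The grouped integrands add up pointwise to $-n^{-1}\Omega\, q(F)$, and at any $\omega$ with $F(\omega)=0$ this equals $n^{-1}\Omega F_1F_2F_3\ge 0$.

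You also need to check that the bounded extensions $L_\mu,\Gamma_\mu,Q_\mu$ agree a.e. with these integral formulas for such non-smooth $h$. Approximate $h$ by uniformly bounded $C_c^\infty$ functions with a common compact support. Use dominated convergence for the formulas and $L^2$ convergence of the operators, then extract an a.e. convergent subsequence. With these two points supplied, your argument is a valid alternative that avoids the $N$-uniform estimates for the truncated operators.
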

\begin{proof}
We have to be a bit careful since for RJ, each separate integral of the collisional operator may diverge. So in order to prove the lemma, we first consider this operator for truncated frequencies, $\mathcal{F}_{N,\mu}$, and then pass to the limit. In particular, we may consider a symmetric compact frequency cutoff $0 \leq X_N(\omega, \omega_1, \omega_2, \omega_3)\leq 1$ which is supported whenever $\omega_i\in [N^{-1}, N]$ for all $i=0,1,2,3$: e.g.
Let $\chi_N\in C_c^\infty((0,\infty))$ satisfy
$ 0 \leq\chi_N \leq 1,$
and $ \operatorname{supp}\chi_N \subset[N^{-1},N].$ 
We define the symmetric frequency cutoff
$$
X_N(\omega_0,\omega_1,\omega_2,\omega_3)
:= \prod_{i=0}^3 \chi_N(\omega_i), $$
and the truncated nonlinear operator 
\begin{align*}
\mathcal{F}_{\mu,N}(h_1)(\omega_0)
:=-n_\mu^{-1} \iint_{D(\omega_0)}
X_N \Omega(\omega_0,\omega_1,\omega_2,\omega_3)
q \big( n_\mu(1+h_1) \big)
\dd\omega_1\dd\omega_2.
\end{align*}
For fixed $N$, all frequencies are restricted to a compact subset of
$(0,\infty)$, and this collision integral is well defined. 
Set
$$
f_i^{h_1}:=n_\mu(\omega_i) (1+h_1(\omega_i)) \geq 0.
$$
On the set where $h_2>0$, the relation $h_2(1+h_1)=0$ implies
$1+h_1=0,$ and so $ f_0^{h_1}=0$. 
Therefore, using the definition of $q$,
\begin{align*}
q(f^{h_1})
&=
f_0^{h_1} f_2^{h_1}f_3^{h_1} +
f_0^{h_1}f_1^{h_1}f_3^{h_1}-
f_1^{h_1}f_2^{h_1}f_0^{h_1}-
f_1^{h_1}f_2^{h_1}f_3^{h_1}
=-f_1^{h_1}f_2^{h_1}f_3^{h_1}.
\end{align*}
Consequently, on the set $\{h_2>0\}$,
$$
\mathcal F_{\mu,N}(h_1)(\omega_0)
= n_\mu^{-1} \iint_{D(\omega_0)}
X_N\Omega
f_1^{h_1}f_2^{h_1}f_3^{h_1}
\dd\omega_1\dd\omega_2 \geq 0.
$$
Since $h_2 \geq 0$, it follows that
\begin{equation}
\label{eq:truncated_quasi_positivity}
\big\langle
\mathcal{F}_{\mu,N}(h_1),h_2
\big\rangle_{L^2(\sqrt{\omega}\dd\omega)}
\geq 0.
\end{equation} 

We now pass to the limit $N\to\infty$. First, if $h\in C_c^\infty((0,\infty))$,  since $X_N \to 1$ pointwise and $0 \leq X_N\leq 1$, dominated convergence
(which we can apply since the proofs of the corresponding operator estimates gives us the integrable majorants) gives 
$$
\mathcal{F}_{\mu,N}(h) \to
\mathcal{F}_\mu(h)
\qquad\text{in }L^2(\sqrt{\omega}\dd\omega).
$$

Now for a general $h\in L^2(\sqrt{\omega}\dd\omega)$: by density, choose
$h^{(m)}\in C_c^\infty((0,\infty))$ such that
$
h^{(m)}\to h $ in $L^2(\sqrt{\omega}\dd\omega)$.

Moreover, the proof of Proposition~\ref{prop: control NL} applies
verbatim to the truncated operators. Indeed, the same algebraic
cancellations are preserved and, after taking absolute values, the
common factor $X_N$ can be discarded since $0\leq X_N\leq1$.
Consequently, on every ball of radius $R$,
$$
\|\mathcal{F}_{\mu,N}(h)-\mathcal{F}_{\mu,N}(k)\|_{L^2(\sqrt{\omega}\dd\omega)}
\leq
C_{\mu,R}
\|h-k\|_{L^2(\sqrt{\omega}\dd\omega)},
$$
with a constant independent of $N$.

Choosing $R$ so that $h$ and the sequence $h^{(m)}$ belong to the ball
of radius $R$, we have
\begin{align*}
&\|\mathcal{F}_{\mu,N}(h)-\mathcal{F}_\mu(h)\|_{L^2(\sqrt{\omega}\dd\omega)}
\leq
\|\mathcal{F}_{\mu,N}(h)-\mathcal{F}_{\mu,N}(h^{(m)})\|_{L^2(\sqrt{\omega}\dd\omega)}
\\ & \hspace{0.2cm} + \|\mathcal{F}_{\mu,N}(h^{(m)})-\mathcal{F}_\mu(h^{(m)})\|_{L^2(\sqrt{\omega}\dd\omega)} +
\|\mathcal{F}_\mu(h^{(m)})-\mathcal{F}_\mu(h)\|_{L^2(\sqrt{\omega}\dd\omega)}
\\ & \hspace{0.4cm}\leq
2C_{\mu,R}\|h -h^{(m)}\|_{L^2(\sqrt{\omega}\dd\omega)} +
\|\mathcal{F}_{\mu,N}(h^{(m)})-\mathcal{F}_\mu(h^{(m)})\|_{L^2(\sqrt{\omega}\dd\omega)}.
\end{align*}
First choosing $m$ sufficiently large and then letting $N\to\infty$ proves the limit.
Applying this for $h=h_1$, we may pass to the limit
in \eqref{eq:truncated_quasi_positivity} and conclude:
$$
\big\langle \mathcal{F}_\mu(h_1), h_2
\big\rangle_{L^2(\sqrt{\omega} \dd\omega)}
= \lim_{N \to \infty}
\big\langle \mathcal{F}_{\mu,N}(h_1),h_2
\big\rangle_{L^2(\sqrt{\omega} \dd\omega)}
\geq 0.
$$
\end{proof}

\begin{proof}[Proof of Proposition \ref{prop:prop_positivity}]
We look at the relative density $1+g = f/n_\mu$ and we decompose $1+g = (1+g)^+ - (1+g)^-$. We define $\overline{g}=\max\{g,-1\} = g + (1+g)^- = (1+g)^+-1$ and  notice that both $(1+g)^-$ and $\overline{g} \in L^2(\sqrt{\omega}\dd \omega)$ since $0\leq (1+g)^- \leq |g|$ and $0\leq |\overline{g}| \leq |g|$, and so  $\|(1+g)^-\|_{L^2(\sqrt{\omega}\dd \omega  )}, \|\overline{g}\|_{L^2(\sqrt{\omega}\dd \omega)} \leq \|g\|_{L^2(\sqrt{\omega}\dd \omega)}$.

We define the functional $$\mathcal{E}(t):= \frac{1}{2} \int_0^\infty | (1+g_t)^-|^2(\omega)\sqrt{\omega}\dd\omega,$$
and then we time-differentiate 
\begin{align*}
   \mathcal{E}'(t)& = -\Big\langle  \mathcal{F}_\mu(g_t), (1+g_t)^-\Big\rangle_{L^2(\sqrt{\omega}\dd\omega)}  \\
   & = -\Big\langle \left[ \mathcal{F}_\mu(g_t) - \mathcal{F}_\mu(\overline{g}_t)\right] , (1+g_t)^- \Big\rangle_{L^2(\sqrt{\omega}\dd \omega)} - 
   \Big\langle \mathcal{F}_\mu(\overline{g}_t) , (1+g_t)^-\Big\rangle_{L^2(\sqrt{\omega}\dd \omega)},
\end{align*}
where in the second line we added and subtracted the $\mathcal{F}_\mu(\overline{g}_t)$-term. Now we claim that 
\begin{align} \label{eq:claim1_positiv}
\big\langle \mathcal{F}_\mu(\overline{g}_t) , (1+g_t)^-\big\rangle_{L^2(\sqrt{\omega}\dd \omega)}\geq 0.
\end{align}
Indeed we see this by an application of Lemma \ref{lemm:positiv_F_mu}, with
$h_1=\overline{g}_t$, $h_2 = (1+g_t)^- $.
 
 We fix $T <T_{max}$ and set $ R:=\sup_{0\leq t\leq T }\|g(t)\|_{L^2(\sqrt{\omega}\dd \omega)}$. By (\ref{Lip1}) and (\ref{Lip2}),  $\mathcal{F}_\mu$ is locally Lipschitz on $L^2(\sqrt{\omega}\dd \omega )$ and
\begin{align*}
     \|\mathcal{F}_\mu(g_t)-\mathcal{F}_\mu(\tilde{g}_t)\|_{L^2(\sqrt{\omega}d\omega )}\leq
    C_{\mu,R}\|g-\tilde{g}_t\|_{L^2(\sqrt{\omega}\dd \omega )}
=C_{\mu,R}\|(1+g_t)^-\|_{L^2(\sqrt{\omega}\dd \omega )}, 
\end{align*}
with $C_{\mu,R} = \|L_\mu\|_{\mathcal{L}(L^2(\sqrt{\omega}d\omega ))} + C_\mu (R+R^2)$. 
Thus combining it with Cauchy-Schwarz, we get 
\begin{align*}
    \mathcal{E}'(t)& \leq C_{\mu, R}\|(1+g_t)^-\|_{L^2(\sqrt{\omega}\dd \omega)}^2 =  2 C_{\mu,R} \mathcal{E}(t).  
\end{align*}
   Gr\"{o}nwall's inequality then yields $\mathcal{E}(t)\leq e^{2 C_{\mu,R} t} \mathcal{E}(0)$. Assuming that $f_0\geq 0$ means that $g_0 \geq -1$ and so $(1+g_0)^-=0$ or that $\mathcal{E}(0)=0$. Therefore $\mathcal{E}(t)=0$ and so $1+g(t,\omega)\geq 0$ on $[0,T_{max}]$ and for a.e. $\omega$, from where we conclude the Proposition.

%   Now it remains to verify \eqref{eq:claim1_positiv}: To see this we denote by $\overline{f}_i = n_\mu(1+\overline{g}_i)$, which is $\geq 0$ (since $1+\overline{g} \geq 0$). Also observe that having that $(1+g)(\omega_0)^->0$ means that $g(\omega_0)<-1$ and thus $\overline{g}(\omega_0)=-1 \Rightarrow \overline{f}(\omega_0)=0$. We then write 
%   \begin{align*}
 %  \big\langle \mathcal{F}_\mu(\overline{g}_t) , (1+g)^-\big\rangle_{L^2(\sqrt{\omega})}
%     &=\big\langle n_\mu^{-1} \mathcal{C}\big(n_\mu (1+\overline{g}_t) \big), (1+g)^- \big\rangle_{L^2(\sqrt{\omega})}
 %    \\& = \left\langle -n_\mu^{-1} \iint_{D(\omega)} \Omega (\omega , \omega _1, \omega _2, \omega _3) q(\overline{f}) \dd\omega_1\dd\omega_2 , (1+g)^- \right\rangle_{L^2(\sqrt{\omega})}
  %     \\&= \left\langle n_\mu^{-1}
 %   \iint_{D(\omega)}
  %  \Omega (\omega, \omega_1, \omega_2, \omega_3)
%    \overline{f}_1\overline{f}_2\overline{f}_3
  %  \dd\omega_1\dd\omega_2,(1+g)^- \right\rangle_{L^2(\sqrt{\omega})}  \ \geq 0, 
%   \end{align*} 
 %  since the three terms containing $\overline{f}_0$ in the collisional integral are zero, due to the observation above.    
\end{proof}

\section{Spectral Gap for the Linear Operator} \label{sec:SG}

In Section \ref{subsec:Boundedness_NoComp_KerL}, we showed that the $K_\mu$ part of the linear operator is not compact, and thus the standard splitting of "multiplication operator + compact" is not the proper tool, in this setting, to conclude a spectral gap for the linear operator $L_\mu$. Nevertheless, for $\mu>0$, in the following subsection we show that there exists a spectral gap, and so the essential spectrum of $L_\mu$ does not touch $0$.  

\subsection{Spectral gap for the linear operator when $\mu>0$}

In particular we obtain the following Theorem: 
 
\begin{theorem}[Poincaré Inequality/Spectral Gap Inequality] \label{theo:SGI}
    There exists $\lambda_\mu >0$ so that 
    $$ D_{\mu}(g) = - \langle L_\mu g,g \rangle \geq \lambda_\mu \| (I - \Pi)g\|_{L^2(\sqrt{\omega}\dd \omega)}^2, \quad \text{ for all } g \in L^2(\sqrt{\omega}\dd\omega). $$
\end{theorem}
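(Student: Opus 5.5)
We argue by contradiction, following the strategy announced in the introduction. Suppose no such $\lambda_\mu$ exists. Then there is a sequence $g_n \in (\operatorname{Ker} L_\mu)^\perp$ (replace $g$ by $(I-\Pi)g$, which leaves $D_\mu$ unchanged since $L_\mu\Pi=0$ and $L_\mu$ is symmetric) with
\[
\|g_n\|_{L^2(\sqrt\omega\dd\omega)}=1, \qquad D_\mu(g_n)\to 0 .
\]
Up to a subsequence, $g_n\rightharpoonup g$ weakly in $L^2(\sqrt\omega\dd\omega)$. Since $D_\mu$ is a nonnegative continuous quadratic form (by the boundedness proposition), it is weakly lower semicontinuous. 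Hence $D_\mu(g)=0$, so $g\in\operatorname{Ker}L_\mu=\langle n_\mu\rangle$ by Proposition~\ref{Prop:Kernel_L}. On the other hand $g\perp n_\mu$, as a weak limit of elements of the closed subspace $(\operatorname{Ker}L_\mu)^\perp$. Thus $g=0$, and the task is to show that $g_n\to 0$ strongly, which contradicts the normalisation.

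To upgrade to strong convergence we split $g_n=\chi_{(0,M_1)}g_n+\chi_{[M_1,M_2]}g_n+\chi_{(M_2,\infty)}g_n$ and use three ingredients.
\begin{itemize}
\item[(a)] \emph{Low frequencies.} Lemma~\ref{lemm:Tightness_zero} gives $\|\chi_{(0,M_1)}g_n\|^2\lesssim_\mu D_\mu(g_n)+M_1^{3/2}$. The $\limsup$ of this piece is therefore $O(M_1^{3/2})$, which is small for $M_1$ small.
\item[(b)] \emph{High frequencies.} Proposition~\ref{Prop:SG_at_infty} gives $\|\chi_{(M_2,\infty)}g_n\|^2\lesssim D_\mu(g_n)+(\log M_2)^{-2}$. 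This piece is small for $M_2$ large.
\item[(c)] \emph{Middle frequencies.} Here we use the local structure. Write $-L_\mu=A_\mu-K_\mu$ and test against $h_n:=\chi_{[M_1,M_2]}g_n$. By Lemma~\ref{lemm:A_mu}, $A_\mu\ge a_\mu>0$, so $\langle A_\mu h_n,h_n\rangle\ge a_\mu\|h_n\|^2$. By Lemma~\ref{lem:local_K_mu_compact} the operator $\chi K_\mu\chi$ with $\chi=\chi_{[M_1,M_2]}$ is compact, so $\langle K_\mu h_n,h_n\rangle\to0$ because $h_n\rightharpoonup 0$.
\end{itemize}

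The remaining step is to relate $D_\mu(g_n)$ to $\langle(A_\mu-K_\mu)h_n,h_n\rangle$. Expand $D_\mu(g_n)=\langle A_\mu g_n,g_n\rangle-\langle K_\mu g_n,g_n\rangle$ and decompose $g_n=h_n+r_n$ with $r_n$ supported in $(0,M_1)\cup(M_2,\infty)$. Then
\[
a_\mu\|h_n\|^2\le \langle A_\mu g_n,g_n\rangle = D_\mu(g_n)+\langle K_\mu h_n,h_n\rangle+2\langle K_\mu h_n,r_n\rangle+\langle K_\mu r_n,r_n\rangle .
\]
The mixed and tail terms are bounded by $\|K_\mu\|\,\|r_n\|(2\|h_n\|+\|r_n\|)\lesssim \|r_n\|$, using that $K_\mu=A_\mu+L_\mu$ is bounded (each part is controlled via \eqref{eq: unif bound}). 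Since $\|r_n\|$ is small uniformly in large $n$ by (a) and (b), we get $\limsup_n\|h_n\|^2\lesssim_\mu \varepsilon(M_1,M_2)$, where $\varepsilon(M_1,M_2)\to0$ as $M_1\downarrow0$ and $M_2\uparrow\infty$. Combining with (a) and (b), $\limsup_n\|g_n\|^2$ can be made arbitrarily small, contradicting $\|g_n\|=1$. Since $\lambda_\mu$ comes from a contradiction argument, it is not explicit.

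The main obstacle is ingredient (b), the high-frequency estimate. There $K_\mu$ is genuinely non-compact, by the dilation argument of Proposition~\ref{prop:noncompactness}, and $\mu$ becomes negligible at large $\omega$. One must therefore extract coercivity directly from the Dirichlet form, near the scale-invariant $\mu=0$ structure. My first attempt would be to pass to logarithmic variables $\omega=e^s$. In these variables the $\mu=0$ operator becomes a convolution-type (translation-invariant) operator. Its symbol vanishes only at the frequency corresponding to the formal invariants, and the $\mu>0$ correction breaks this degeneracy at rate $(\log M_2)^{-1}$, which should produce the $(\log M_2)^{-2}$ remainder. Ingredient (a) is easier, since near $0$ the equilibrium $n_\mu$ is bounded and the weight $\sqrt\omega$ gives $M_1^{3/2}$ by direct integration. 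The compactness in (c) should follow from Hilbert–Schmidt bounds on the truncated kernel once singularities near $c=0$ are handled as in the boundedness proof.
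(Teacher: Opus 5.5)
Your argument is correct and is essentially the paper's proof. It has the same structure: a contradiction sequence with $g_n\rightharpoonup 0$ (you use weak lower semicontinuity of $D_\mu$, where the paper uses $\|L_\mu g_n\|^2\le\|L_\mu\|\,D_\mu(g_n)$), the lower bound $a_\mu\ge\pi^2/6$, compactness of $\chi_{[M_1,M_2]}K_\mu\chi_{[M_1,M_2]}$, and tightness at $0$ and $\infty$; your bound on $\|h_n\|$ is just a rearrangement of the paper's proof that $\langle K_\mu g_n,g_n\rangle\to0$, and the high-frequency estimate you invoke is stated in the paper as Lemma~\ref{lemm:transfer_coercivity}, built on Proposition~\ref{Prop:SG_at_infty}.

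Your heuristic for how (b) would be proved gets the mechanism wrong, though this does not affect the proof of the theorem. The $\mu=0$ restricted form is already uniformly coercive in logarithmic variables, because the formal invariants $h=1,\ h=\omega$ correspond to exponents $s=0,1$, which lie off the line $s=\tfrac14+i\xi$, $\xi\in\mathbb{R}$. The $(\log M_2)^{-2}$ loss instead comes from the commutator with the slowly varying cutoff $\eta\big((\omega/M)^{\varepsilon_M}\big)$, $\varepsilon_M\sim 1/\log M$, and not from $\mu$ lifting a degeneracy.
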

From \eqref{eq: A_mu+K_mu}, the linear operator is $$L_\mu g  = \iint_{D(\omega)} \Omega (\omega , \omega _1, \omega _2, \omega _3) n_1n_2n_3 \Big[-\frac{g_0}{n}  - \frac{g_3}{n_3} + \frac{g_1}{n_1} + \frac{g_2}{n_2}   
\Big] \dd \omega_1 \dd \omega_2 =: -[A_\mu g](\omega) + [K_\mu g](\omega).$$
We start with some useful Lemmas and Propositions that we are going to use in the proof of the Poincaré Inequality. 

\begin{lemma}\label{lemm:A_mu}
   The operator $A_\mu$ in \eqref{eq: A_mu+K_mu}, is a multiplication operator with $[A_\mu g](\omega) = a_\mu(\omega) g(\omega)$ and the function $a_\mu$ is lower bounded away from zero. In particular  
   $$ a_\mu(\omega) \geq \frac{\pi^2}{6} \text{ for all } \omega>0.$$
\end{lemma}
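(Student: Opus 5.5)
The plan is to compute $a_\mu$ explicitly, reduce to $\mu=1$ by scaling, and then bound the integral from below by keeping only the region where the minimum in $\Omega$ equals $\sqrt{\omega}$. On that region the integral turns out to be exactly $\pi^2/6$, uniformly in $\omega$.

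\textbf{Step 1: formula for $a_\mu$.} Reading off \eqref{eq: A_mu+K_mu}, the term containing $g_0$ factors out of the integral. Hence $A_\mu$ is multiplication by
$$a_\mu(\omega)=\frac{\omega+\mu}{\sqrt{\omega}}\iint_{D(\omega)}\frac{\min(\sqrt{\omega},\sqrt{\omega_1},\sqrt{\omega_2},\sqrt{\omega_3})}{(\omega_1+\mu)(\omega_2+\mu)(\omega_3+\mu)}\dd\omega_1\dd\omega_2 .$$
This is exactly the quantity shown to be bounded in \eqref{eq: unif bound}, so $a_\mu$ is finite.

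\textbf{Step 2: reduction to $\mu=1$.} The rescaling $\omega=\mu x$, $\omega_1=\mu y$, $\omega_2=\mu z$ was already used in the boundedness proof. It shows that the expression is homogeneous of degree zero, so $a_\mu(\omega)=a_1(\omega/\mu)$. It therefore suffices to prove $a_1(x)\ge \pi^2/6$ for every $x>0$.

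\textbf{Step 3: lower bound by a subregion.} The integrand is nonnegative, so I restrict to the region $\{y\ge x,\ z\ge x\}$. There $y+z-x\ge x$ as well, so the minimum equals $\sqrt{x}$ and cancels the prefactor $1/\sqrt{x}$. This gives
$$a_1(x)\ge (x+1)\int_x^\infty\!\!\int_x^\infty\frac{\dd y\,\dd z}{(y+1)(z+1)(y+z-x+1)} .$$
Next I shift $y=x+u$, $z=x+v$, so every factor becomes (variable)$\,+\,(x+1)$. Then I scale $u=(x+1)a$, $v=(x+1)b$. The Jacobian $(x+1)^2$ and the prefactor $(x+1)$ together cancel the cube of $(x+1)$ coming from the denominator. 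What remains is the $x$-independent integral
$$\iint_{a,b>0}\frac{\dd a\,\dd b}{(a+1)(b+1)(a+b+1)} .$$

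\textbf{Step 4: evaluating the constant.} This integral was already computed in the case $0<x<1$ of the boundedness proof. Setting $s=a+b$ and using partial fractions in the inner variable, it reduces to $2\int_0^\infty\frac{\log(s+1)}{(s+1)(s+2)}\dd s$, whose value is $\pi^2/6$. I would double-check this value via the substitution $t=1/(s+1)$, which turns it into $-2\int_0^1\frac{\log t}{1+t}\dd t=2\cdot\frac{\pi^2}{12}$.

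The argument is essentially elementary. The only point needing care is to choose the subregion so that the minimum is exactly $\sqrt{x}$ and the resulting integral is scale-free, which the shift-then-scale substitution in Step 3 achieves.
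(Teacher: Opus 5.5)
Your proposal is correct and follows the paper's argument. Like the paper, you restrict to $\{\omega_1>\omega,\ \omega_2>\omega\}$, where the minimum equals $\sqrt{\omega}$, and evaluate the remaining integral exactly as $\pi^2/6$. The only differences are computational: the paper integrates out $\omega_2$ by partial fractions and reduces to $\int_0^\infty \frac{\log(1+t)}{t(1+t)}\dd t$, whereas you rescale to the scale-free double integral and evaluate it via $s=a+b$. Your Step 2 is unnecessary, since shifting and scaling by $x+\mu$ works directly.
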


\begin{proof}
The function $a_\mu(\omega ) $ is
\begin{align*}
 a_\mu (\omega )= -\iint_{ D(\omega ) }\frac {\min\{\sqrt{\omega }, \sqrt{\omega _1}, \sqrt{\omega _2}, \sqrt{\omega _3}\}} {\sqrt{\omega }}\left(n_{\mu,1} n_{\mu,2} - n_{\mu,3}n_{\mu,1}-n_{\mu,3}n_{\mu,2} \right)\dd \omega _1 \dd \omega_2
   \end{align*}
   with,
   \begin{align*}
&n_{\mu,1}n_{\mu,2}-n_{\mu,3}n_{\mu,1}-n_{\mu,3}n_{\mu,2}=\frac{1}{(\mu +\omega _1)}\frac {1} {( \mu +\omega _2)}-\frac {1} {(\mu +\omega _1+\omega _2-\omega )}\frac {1} {(\mu +\omega _1)} \\
&\hskip 4cm -\frac {1} {(\mu +\omega _1+\omega _2-\omega )}\frac {1} {(\mu +\omega _2)}\\
&= -\frac {\mu +\omega } {(\mu +\omega _1+\omega _2-\omega )(\mu +\omega _1)(\mu +\omega _2)}.
\end{align*}
Since, by definition,  $\omega _1+\omega _2\ge \omega $ on $D(\omega )$, it follows
\begin{align*}
a_\mu (\omega )&\ge (\mu +\omega)
\int\int_{\omega _1>\omega , \omega _2>\omega }\frac {\dd \omega _1 \dd \omega _2 } {(\mu +\omega _1+\omega _2-\omega )(\mu +\omega _1)(\mu +\omega _2)}\\
&=(\mu +\omega )\int 
 _{ \omega _1>\omega  }    
 \log\left(\frac{\mu +\omega_1 }{\mu +\omega } \right)\frac{\dd \omega_1 }{(\mu +\omega _1)(\omega -\omega _1)}
 =\frac{\pi^2}{6}.
 \end{align*}

\end{proof}

Then we state and prove two Propositions regarding coercivity in low frequencies but also separately in high frequencies.

\underline{We start with the low frequencies:} 

\begin{proposition}[Low frequency estimate] \label{lemm:Tightness_zero}
    Let $\mu>0$. There exists $C_\mu>0$ so that for $0<M_1\leq \mu/2$, 
    \begin{align} \label{eq:Tightness_zero_1}
    \|\chi_{(0,M_1)}K_\mu g\|_{L^2(\sqrt{\omega})} \lesssim_\mu M_1^{3/4} \|g\|_{L^2(\sqrt{\omega})}. 
    \end{align}
    Moreover, 
    \begin{align} \label{eq:Tightness_zero_2}
        \|\chi_{(0,M_1)} g\|_{L^2(\sqrt{\omega})}^2 \lesssim_\mu D_\mu(g) +  M_1^{3/2} \|g\|_{L^2(\sqrt{\omega})}^2.
    \end{align}
    In particular, if 
    $$ (g_n)_n \subset L^2(\sqrt{\omega})\ \text{ with } \ \|g_n\|_{L^2(\sqrt{\omega})}=1 \ \text{ and }
    D_\mu(g_n)\to 0,$$
    then  
\begin{align}\label{eq:Tightness_zero_3}
\lim_{M_1\downarrow0}\limsup_{n\to\infty}
    \|\chi_{(0,M_1)}g_n\|_{L^2(\sqrt{\omega})}=0.
    \end{align}
\end{proposition}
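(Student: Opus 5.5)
Three steps: (a) bound $K_\mu g$ pointwise for small $\omega$, then integrate the square against $\sqrt{\omega}$ over $(0,M_1)$; (b) combine (a) with the lower bound on $a_\mu$ from Lemma \ref{lemm:A_mu} to get \eqref{eq:Tightness_zero_2}; (c) let $n\to\infty$ and then $M_1\downarrow 0$.

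\textbf{Step (a).} Write $K_\mu=K_{1,\mu}-K_{2,\mu}$ as in Subsection \ref{subsec:NoCompact}. The gain comes from the factor
$$\Omega\le \min\Bigl(1,\sqrt{\omega_3/\omega}\Bigr)\le 1 .$$
For $\omega<M_1\le\mu/2$, all weights $n_i\le \mu^{-1}$ and $n_0^{-1}\le 3\mu/2$ are harmless. We then need a uniform-in-$\omega$ bound
$$|K_\mu g(\omega)|\lesssim_\mu \|g\|_{L^2(\sqrt\omega)} \quad\text{for } \omega\in(0,\mu/2).$$

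For $K_{2,\mu}$, the $\omega_3$ term, change variables to $(\omega_1,\omega_3)$, with $\omega_2=\omega+\omega_3-\omega_1$. We get
$$|K_{2,\mu}g(\omega)|\le \int_0^\infty |g_3|\,\Bigl(\int_0^{\omega+\omega_3}\frac{\dd\omega_1}{(\omega_1+\mu)(\omega+\omega_3-\omega_1+\mu)}\Bigr)\dd\omega_3 .$$
The inner integral is $\lesssim_\mu \log(2+\omega_3)/(\omega_3+\mu)$. Cauchy--Schwarz against $\omega_3^{1/4}$ then gives $\lesssim_\mu\|g\|_{L^2(\sqrt\omega)}$, since
$$\int_0^\infty\frac{\log^2(2+s)}{(s+\mu)^2\sqrt s}\,\dd s<\infty .$$
For $K_{1,\mu}$ use $(\omega_1,\omega_3)$ variables in the same way; the inner $\omega_3$-integral of $n_2n_3$ is again $\lesssim_\mu \log(2+\omega_1)/(\omega_1+\mu)$. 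This mirrors the $K_1(\omega_0)$ computation in Proposition \ref{prop: control NL}.

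Squaring and integrating then gives
$$\int_0^{M_1}|K_\mu g|^2\sqrt\omega\,\dd\omega\lesssim_\mu M_1^{3/2}\|g\|^2,$$
which is \eqref{eq:Tightness_zero_1}. The main obstacle is ensuring no singularity appears as $\omega\to0$. This holds because of the division by $\sqrt{\omega}$ inside $\Omega$ is absorbed by the minimum, and $\mu>0$ keeps all the $n_i$ bounded.

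\textbf{Step (b).} Write $\langle A_\mu g,g\rangle = D_\mu(g)+\langle K_\mu g,g\rangle$. Since $a_\mu\ge\pi^2/6$ by Lemma \ref{lemm:A_mu}, restricting to $(0,M_1)$ is not immediate, because $D_\mu$ is global. Instead I would use the decomposition pointwise:
$$a_\mu g=K_\mu g-L_\mu g .$$
This yields
$$\|\chi_{(0,M_1)}g\|\le \tfrac{6}{\pi^2}\bigl(\|\chi K_\mu g\|+\|L_\mu g\|\bigr).$$
Since $-L_\mu$ is bounded, nonnegative and symmetric, we have
$$\|L_\mu g\|^2\le \|L_\mu\|\,\langle -L_\mu g,g\rangle=\|L_\mu\|\,D_\mu(g).$$
Squaring and inserting \eqref{eq:Tightness_zero_1} gives \eqref{eq:Tightness_zero_2}.

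\textbf{Step (c).} Applying \eqref{eq:Tightness_zero_2} to $g_n$ with $\|g_n\|=1$ gives
$$\limsup_n\|\chi_{(0,M_1)}g_n\|^2\lesssim_\mu M_1^{3/2}\to0 \quad\text{as } M_1\downarrow 0,$$
which is \eqref{eq:Tightness_zero_3}.
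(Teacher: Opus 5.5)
Your proposal is correct and follows the paper's proof essentially step by step. It first proves a uniform pointwise bound $|K_\mu g(\omega)|\lesssim_\mu \|g\|_{L^2(\sqrt{\omega})}$ for $\omega\le\mu/2$, by using $\Omega\le 1$, evaluating the inner logarithmic integrals and applying Cauchy--Schwarz against $\omega^{1/4}$; it then integrates over $(0,M_1)$ and finishes with $a_\mu g=K_\mu g-L_\mu g$, $a_\mu\ge\pi^2/6$ and $\|L_\mu g\|^2\le\|L_\mu\|\,D_\mu(g)$, all exactly as in the paper.
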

\begin{proof}
    Let us consider the $g_1$-term in $K_\mu$. The $g_2$-term is identical. We write:  
    $$I_1(\omega):= \iint_{D(\omega)} \Omega\  n_2n_3g_1\,d\omega_1d\omega_2.$$
    Then 
    \begin{align} \label{eq:tightat0_1}
        |I_1|\leq \int_0^\infty |g(\omega_1)|\left[ \int_{(\omega-\omega_1)_+}^\infty \frac{\dd \omega_2}{(\omega_2+\mu)(\omega_1+\omega_2-\omega + \mu)} \right] \dd \omega_1.
    \end{align}
    We estimate the inner $\omega_2$-integral: For $0<\omega\le\mu/2$, we get
\begin{align}\label{eq:tightat0_2}\int_{(\omega-\omega_1)_+}^\infty \frac{\dd \omega_2}{(\omega_2+\mu)(\omega_1+\omega_2-\omega + \mu)}
    \le C_\mu
    \frac{1+\log(1+\omega_1/\mu)}{\omega_1+\mu}.
\end{align}
By Cauchy-Schwarz in \eqref{eq:tightat0_1}, 
\begin{equation}\label{eq:tightat0_3}
\begin{split}
     |I_1| 
    &\leq \|g\|_{L^2(\sqrt{\omega}\dd\omega)} \left[ \int_0^\infty \left( \int_{(\omega-\omega_1)_+}^\infty \frac{\dd \omega_2}{(\omega_2+\mu)(\omega_1+\omega_2-\omega + \mu)}\right)^2 \frac{\dd\omega_1}{\sqrt{\omega_1}}\right]^{1/2}
    \\
    & \leq C_\mu \|g\|_{L^2(\sqrt{\omega}\dd\omega)}, 
\end{split}
\end{equation}
since given the computation in \eqref{eq:tightat0_2}, the above integral is finite uniformly in $0<\omega\le\mu/2$: Indeed, near $0$, the integrand behaves like $\omega_1^{-1/2}$, and near infinity it behaves like $(\log \omega_1)^2\omega_1^{-5/2}$. 

For the $g_3$-term, we argue similarly. First for fixed $\omega_3$, we obtain
$$\int_0^{\omega+\omega_3}
    \frac{d\omega_1}
    {(\omega_1+\mu)(\omega+\omega_3-\omega_1+\mu)}
    \le C_\mu
    \frac{1+\log(1+\omega_3/\mu)}{\omega_3+\mu}, 
$$ 
and finally again by Cauchy-Schwarz we conclude that 
$$ |K_\mu g(\omega)|
    \le C_\mu\|g\|_{L^2(\sqrt{\omega}\dd\omega)},
    \qquad 0<\omega\le\mu/2.
$$
Therefore, 
\begin{equation}
    \|\chi_{(0,M_1)}K_\mu g\|_{L^2(\sqrt{\omega}\dd\omega)}^2
    \le
    C_\mu\|g\|_H^2
    \int_0^{M_1}\sqrt{\omega} \dd\omega \le
    C_\mu M_1^{3/2}\|g\|_{L^2(\sqrt{\omega}\dd\omega)}^2, 
\end{equation}
which concludes the first claim, \eqref{eq:Tightness_zero_1} of the Proposition.

For the rest, we use the decomposition $ A_\mu g=-L_\mu g+K_\mu g,
$ and the lower bound $a_\mu\ge \frac{\pi^2}{6}$ from Lemma \ref{lemm:A_mu}. Then  
\begin{equation}
\begin{split}
    \frac{\pi^2}{6}\|\chi_{(0,M_1)}g\|_{L^2(\sqrt{\omega}\dd\omega)}
    &\le \|\chi_{(0,M_1)}A_\mu g\|_{L^2(\sqrt{\omega}\dd\omega)} \\
    &\le
    \|-L_\mu g\|_{L^2(\sqrt{\omega}\dd\omega)}
    +\|\chi_{(0,M_1)}K_\mu g\|_{L^2(\sqrt{\omega}\dd\omega)}.
\end{split}
\end{equation}

Now since $-L_\mu$ is bounded, self-adjoint and nonnegative, it admits a bounded nonnegative square root, and so
$$ \|L_\mu g\|_{L^2(\sqrt{\omega}\dd\omega)}^2 =\|\sqrt{-L_\mu} \sqrt{-L_\mu}g\|_{L^2(\sqrt{\omega}\dd\omega)}^2 \le \|\sqrt{-L_\mu}\|^2\|\sqrt{-L_\mu}g\|_{L^2(\sqrt{\omega}\dd\omega)}^2 = \|L_\mu\| D_\mu(g).
$$
%Now since $-L_\mu$ is bounded, self-adjoint and nonnegative,
%$$ \|-L_\mu g\|_{L^2(\sqrt{\omega}\dd\omega)}^2
 %   \le \|L_\mu\| \langle (-L_\mu g),g \rangle =
  %  \|L_\mu\| D_\mu(g).$$ 

Thus
    $$
\|\chi_{(0,M_1)}g\|_{L^2(\sqrt{\omega}\dd\omega)}
\le C_\mu D_\mu(g)^{1/2}
    + C_\mu M_1^{3/4}\|g\|_{L^2(\sqrt{\omega}\dd\omega)}.
$$
This concludes the second claim, \eqref{eq:Tightness_zero_2}, of the Proposition. 
The last claim \eqref{eq:Tightness_zero_3} is immediate after taking the $\limsup_n$. 
\end{proof}

\underline{We continue with the high frequencies} which are of course  more delicate. Indeed due to the lack of compactness of $K_\mu$, we cannot expect a similar estimate \eqref{eq:tightat0_1} as in the low-frequencies case in Lemma \ref{lemm:Tightness_zero}!

However we are still able to recover an analogous 'coercivity property at infinity' 
$$ \|\chi_{(M_2, \infty)}g\|_{L^2(\sqrt{\omega})}^2 \lesssim D_\mu (g) + \log(M_2)^{-2} \|g\|_{L^2(\sqrt{\omega})}^2, $$
by (i) keeping only a positive contribution of the Dirichlet form where all four frequencies $\omega_i$ are all comparable: $$D_\mu(g)\geq D_\mu^{restr}(g)$$
where $D_\mu^{restr}$ is the restricted Dirichlet form in a bounded rectangle $\mathcal{R} = [R_1,R_2]^2$, with $R_1\geq 1$ where all four frequencies are comparable,\\ (ii) provide a coercivity property for $D_{\mu=0}^{restr}$ when $\mu=0$ (which deals with functions $g$ supported at infinity) cf Proposition \ref{Prop:SG_at_infty},  and finally \\ (iii) transfer a coercivity property to arbitrary functions $g$ through a localisation argument with a smooth cut-off function, cf Proposition \ref{lemm:transfer_coercivity}.

We set $$\omega_1=\alpha \omega, \omega_2=\beta \omega \text{ and } \gamma:=\alpha+\beta-1 \text{ so that } \omega_3=\gamma \omega.$$  They all live in a domain  $\mathcal{R} \subset \{(\alpha,\beta) \in (0,\infty)^2: \alpha+\beta>1\}$. We assume that $\mathcal{R} = [R_1,R_2]^2$ with $R_1\geq 1$ and $R_2$ bounded. 
Then for $\mu\geq 0$, the Dirichlet form is 

\begin{equation}
    \begin{split}
    &D_\mu(g) \geq  D_\mu^{restr}(g) := \\
    & \frac{1}{4} \int_0^\infty \iint_{\mathcal{R}} \frac{\sqrt{\omega} \min(1,\sqrt{\alpha}, \sqrt{\beta},\sqrt{\gamma}) }{(\omega+\mu)(\alpha\omega+\mu)(\beta\omega+\mu)(\gamma\omega+\mu)}\left[- h_\mu(\alpha \omega) - h_\mu(\beta \omega) + h_\mu(\gamma \omega) + h_\mu(\omega)\right]^2 \omega^2\dd \alpha \dd \beta  \dd \omega 
    \\
    & = \frac{1}{4} \int_0^\infty \iint_{\mathcal{R}} \frac{\omega^{5/2}}{(\omega+\mu)(\alpha\omega+\mu)(\beta\omega+\mu)(\gamma\omega+\mu)}\left[- h_\mu(\alpha \omega) - h_\mu(\beta \omega) + h_\mu(\gamma \omega) + h_\mu(\omega)\right]^2 \dd \alpha \dd \beta  \dd \omega
    \end{split}
\end{equation}
where $h_\mu(z):=(z+\mu) g(z)$, and where we used that 
 $\dd\omega_1\dd\omega_2 = \omega^2 \dd \alpha \dd \beta$ and we can fix $\mathcal{R}$ so that the minimum factor is $\sqrt{\omega}$.

Also, at high frequencies, $\mu$ is negligible and thus the relevant object is the above restricted Dirichlet form at $\mu=0$, namely: 
$$ D_0^{restr}(h):= 
\int_0^\infty  \omega^{-3/2} \iint_{\mathcal{R}}  \frac{1}{4\alpha\beta\gamma} \left[- h(\alpha \omega) - h(\beta \omega) + h(\gamma \omega) + h(\omega) \right]^2  \dd \alpha \dd \beta \dd \omega, $$
where $$h(z):=zg(z).$$

\begin{proposition}[Coercivity for the limiting Dirichlet form \& Spectral Gap for functions supported at infinity] \label{Prop:SG_at_infty}
There exists $\lambda_{\mathcal{R}}>0$ such that for all $h=\omega g \in L^2(\omega^{-3/2}\dd\omega)$, we have 
\begin{equation}\label{eq:SG_at_infty_1}
    D_0^{restr}(h) \ge
    \lambda_{\mathcal{R}}
    \int_0^\infty |h(\omega)|^2\omega^{-3/2} \dd \omega.
\end{equation} 
As a consequence, we get coercivity for functions supported at infinity: There exists $M^*>0$ so that whenever $\operatorname{supp} g \subset [M^*, \infty)$,
\begin{equation}\label{eq:SG_at_infty_2}
   D_\mu (g) \geq D_\mu^{restr}(g) \geq \lambda_{\infty} \|g\|_{L^2(\sqrt{\omega}\dd\omega)}^2
\end{equation} 
for some $\lambda_{\infty}>0$. 
\end{proposition}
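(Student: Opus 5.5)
The plan is to exploit the scale invariance of $D_0^{restr}$. Pass to exponential variables $\omega=e^{s}$, $s\in\mathbb{R}$, and set
$$u(s):=e^{-s/4}h(e^{s}).$$
Then $\int_0^\infty |h|^2\omega^{-3/2}\dd\omega=\int_{\mathbb{R}}|u(s)|^2\dd s$. Write $a=\log\alpha$, $b=\log\beta$, $c=\log\gamma$. Since $h(\alpha\omega)=e^{(s+a)/4}u(s+a)$ and $\omega^{-3/2}\dd\omega=e^{-s/2}\dd s$, all powers of $e^{s}$ cancel, and we obtain
$$D_0^{restr}(h)=\iint_{\mathcal R}\frac{1}{4\alpha\beta\gamma}\int_{\mathbb R}\big|u(s)+\gamma^{1/4}u(s+c)-\alpha^{1/4}u(s+a)-\beta^{1/4}u(s+b)\big|^2\dd s\,\dd\alpha\,\dd\beta .$$
This is a translation-invariant quadratic form in $u$. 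By Plancherel it equals $\int_{\mathbb R} m(\xi)|\hat u(\xi)|^2\dd\xi$, with symbol
$$m(\xi)=\iint_{\mathcal R}\frac{1}{4\alpha\beta\gamma}\big|1+\gamma^{1/4}e^{ic\xi}-\alpha^{1/4}e^{ia\xi}-\beta^{1/4}e^{ib\xi}\big|^2\dd\alpha\,\dd\beta .$$
Inequality \eqref{eq:SG_at_infty_1} is then equivalent to $\inf_{\xi}m(\xi)=:\lambda_{\mathcal R}>0$.

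To show that $\inf_\xi m(\xi)>0$, we proceed in three steps.

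First, $m$ is continuous in $\xi$, since $\mathcal R=[R_1,R_2]^2$ is compact and the integrand is bounded there (note $\gamma\ge 2R_1-1\ge1$).

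Second, $m(\xi)>0$ for every fixed $\xi$. The map $(\alpha,\beta)\mapsto 1+\gamma^{1/4}e^{i\xi\log\gamma}-\alpha^{1/4}e^{i\xi\log\alpha}-\beta^{1/4}e^{i\xi\log\beta}$ is real-analytic on a neighbourhood of $\mathcal R$. If $m(\xi)=0$, this map would vanish on the whole square, hence identically. That is impossible: differentiating in $\alpha$ at fixed $\beta$ gives $\partial_\alpha(\gamma^{1/4+i\xi})=\partial_\alpha(\alpha^{1/4+i\xi})$ with $\gamma=\alpha+\beta-1\neq\alpha$ (taking $\beta\neq1$ inside the domain), and this contradicts the strict monotonicity of $x\mapsto|x^{-3/4+i\xi}|=x^{-3/4}$. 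For $\xi=0$ one can also check directly, e.g. $1+3^{1/4}-2\cdot2^{1/4}\neq0$ at $\alpha=\beta=2$, after choosing $\mathcal R$ to contain such a point.

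Third, as $|\xi|\to\infty$, expand the square. The cross terms are oscillatory integrals of the form $\iint F(\alpha,\beta)e^{i\xi\phi(\alpha,\beta)}$ with phases $\phi\in\{\log\alpha,\log\beta,\log\gamma,\log(\alpha/\beta),\dots\}$, each having a nonvanishing gradient on $\mathcal R$. By Riemann–Lebesgue (or one integration by parts) they tend to $0$. Hence
$$m(\xi)\to\iint_{\mathcal R}\frac{1+\gamma^{1/2}+\alpha^{1/2}+\beta^{1/2}}{4\alpha\beta\gamma}\dd\alpha\,\dd\beta>0 .$$
Continuity, pointwise positivity and this positive limit at infinity together give $\inf m>0$.

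The main obstacle is controlling all the cross-term phases uniformly, in particular $\log(\gamma/\alpha)$, whose gradient must be checked to be nonzero on $\mathcal R$. Since $\partial_\beta\log(\gamma/\alpha)=1/\gamma\neq0$, this works.

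For the consequence \eqref{eq:SG_at_infty_2}, first note $D_\mu\ge D_\mu^{restr}$ because the full integrand is nonnegative. On $\mathcal R$ we have $\alpha,\beta,\gamma\ge1$, so the min-factor equals $\sqrt\omega$, as already used. Take $M^*\ge\mu$ and $\operatorname{supp}g\subset[M^*,\infty)$; the bracket vanishes unless at least one of the four arguments is $\ge M^*$.

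We would rather compare the weights pointwise and avoid localisation. For $x\in\{1,\alpha,\beta,\gamma\}$ and $\omega\ge M^*$ we have $x\omega+\mu\le 2x\omega$. However, on the part of the domain where $\omega<M^*$ (which can contribute when, say, $\alpha\omega\ge M^*$), the factor $\omega+\mu$ is not comparable to $\omega$. To handle this, choose instead $M^*\ge R_2\mu$, so that $\omega\ge M^*/R_2\ge\mu$ whenever the bracket is nonzero. On this set every factor satisfies $x\omega+\mu\le2x\omega$.

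This yields
$$D_\mu^{restr}(g)\ge 2^{-4}D_0^{restr}(h_\mu),\qquad h_\mu(z)=(z+\mu)g(z),$$
where we have extended $D_0^{restr}$ verbatim to $h_\mu$, and $h_\mu\in L^2(\omega^{-3/2})$ because $g$ is supported away from $0$. Applying \eqref{eq:SG_at_infty_1} to $h_\mu$ gives
$$D_\mu^{restr}(g)\ge 2^{-4}\lambda_{\mathcal R}\int_0^\infty|(\omega+\mu)g|^2\omega^{-3/2}\dd\omega\ge 2^{-4}\lambda_{\mathcal R}\|g\|^2_{L^2(\sqrt\omega\dd\omega)} .$$
So we can take $\lambda_\infty=\lambda_{\mathcal R}/16$.
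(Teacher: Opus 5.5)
Your proof is correct and follows essentially the same route as the paper: the change to exponential variables $\omega=e^{s}$ with $u(s)=e^{-s/4}h(e^{s})$, then Plancherel to reduce to $\inf_\xi m(\xi)>0$, which you get from continuity, pointwise nonvanishing (your modulus argument $\gamma^{-3/4}=\alpha^{-3/4}\Rightarrow\beta=1$ is a cleaner form of the paper's) and non-stationary phase as $|\xi|\to\infty$, and finally a pointwise comparison of the $\mu>0$ and $\mu=0$ kernels. One small slip: $\gamma=\alpha+\beta-1$ can reach $2R_2-1>R_2$, so $M^*\ge R_2\mu$ only guarantees $\omega\ge\mu/2$ on the relevant set (which still works, with factor $3^{-4}$ in place of $2^{-4}$), and to get $\omega\ge\mu$ you need $M^*\ge(2R_2-1)\mu$, which is the paper's $C(\mathcal{R})\mu$.
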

\begin{proof}
    We start with proving \eqref{eq:SG_at_infty_1}. We consider the problem in the new Fourier variables $\omega =e^x $ and define $G(x) := h(e^x) e^{-x/4}$ so that 
    \begin{align} \label{eq:SG_at_inftyProof1}
        \int_0^\infty |h(\omega)|^2\omega^{-3/2} \dd\omega = \int_{-\infty}^\infty |G(x)|^2 \dd x.
    \end{align} 
    In these variables, the collisional integrand is
\begin{align*}
    &- h(\alpha \omega) - h(\beta \omega) + h(\gamma \omega) + h(\omega) = - h(e^{\log \alpha + x}) - h(e^{\log \beta + x}) + h(e^{\log \gamma + x}) + h(e^x)
    \\ & = e^{x/4} \Big( -\alpha^{1/4} G(x+\log \alpha) -\beta^{1/4} G(x+\log \beta) + \gamma^{1/4} G(x+\log \gamma) + G(x) \Big)  
\end{align*}
while for the Jacobian we have $\omega^{-3/2}\dd\omega = e^{-x/2}\dd x$. Then by Plancherel, and since 
$$ \mathcal{F}(G(\cdot+\log \alpha))(\xi) = \alpha^{i\xi} \mathcal{F}(G)(\xi)$$ we have that 
$$
D_0^{restr}(h) = \int_{-\infty}^{+\infty} |\widehat{G}(\xi)|^2 \iint_{\mathcal{R}} \frac{\big\vert -\alpha^{1/4+i\xi} - \beta^{1/4+i\xi}+
            \gamma^{1/4+i\xi}+1
        \big\vert^2}{4\alpha\beta\gamma} 
 \dd\alpha\dd\beta\ \dd \xi.
$$
To complete the proof, it remains to show that \begin{align}\label{eq:SG_at_inftyProof2}
    \lambda_{\mathcal{R}}:= \inf_{\xi \in \mathbb{R}} \iint_{\mathcal{R}} \frac{\big\vert -\alpha^{1/4+i\xi} - \beta^{1/4+i\xi}+
            \gamma^{1/4+i\xi}+1
        \big\vert^2}{4\alpha\beta\gamma} 
 \dd\alpha\dd\beta > 0.
\end{align}
Indeed if we know that then combining it with \eqref{eq:SG_at_inftyProof1}, we immediately obtain \eqref{eq:SG_at_infty_1}. 

In order to see that \eqref{eq:SG_at_inftyProof2} holds: 
First we observe that we have a pointwise positivity: for every fixed $\xi \in \mathbb{R}$, the double integral is positive. Indeed, if not then $F_\xi (\alpha, \beta):= \alpha^{1/4+i\xi} + \beta^{1/4+i\xi}-
            (\alpha+\beta-1)^{1/4+i\xi}-1=0$ for a.e. pair $(\alpha, \beta) \in \mathcal{R}$. By continuity of $F_\xi$, in fact the identity should hold for all pairs $(\alpha, \beta) \in \mathcal{R}$. And if we differentiate w.r.t. $\alpha$ for example, we have $(1/4+i\xi)\alpha^{i\xi -3/4} = (\alpha+\beta-1)^{-3/4+i\xi}(1/4+i\xi)$, which is true only if $i\xi=3/4$, but $\xi \in \mathbb{R}$, so it is impossible. 

Now to pass to a rather uniform lower bound we need to show that \begin{align}\label{eq:SG_at_infty_4} \iint_{\mathcal{R}} \frac{\big\vert -\alpha^{1/4+i\xi} - \beta^{1/4+i\xi}+
            \gamma^{1/4+i\xi}+1
        \big\vert^2}{4\alpha\beta\gamma} 
 \dd\alpha\dd\beta > 0 \quad \text{ as } |\xi| \to \infty,\end{align}
 since as a function of $\xi$ this is continuous and pointwise positive and thus if \eqref{eq:SG_at_infty_4} is also true then $\lambda_{\mathcal{R}}>0$.
 
 For \eqref{eq:SG_at_infty_4}: If we expand the square, we get two kinds of terms: four of the form $\ell^{1/4+i\xi}\overline{\ell^{1/4+i\xi}} = \ell^{1/2}$, for $\ell\in\{\alpha,\beta, \gamma, 1\}$,  and six mixed term that will vanish by non-stationary phase. Indeed these mixed terms are of the form
 either $ \pm 2 \ell^{1/4+i\xi} \overline{\kappa^{1/4+i\xi}} = 2 (\ell\kappa)^{1/4}e^{i \xi \log (\ell/\kappa)}$ or $\pm \ell^{1/4}e^{i \xi \log (\ell)}$ for $\ell, \kappa \in\{\alpha,\beta, \gamma\}$. So these cross terms lead to oscillatory integrals of the form $$\iint_{\mathcal{R}} \psi(\alpha, \beta)e^{i\xi \phi(\alpha, \beta)}\dd\alpha \dd \beta $$
 with each of the phases $\phi$ \footnote{The possible phases are
$\log\alpha$, $\log\beta$,  $\log\gamma$, $\log(\alpha/\beta)$, $\log(\alpha/\gamma)$,
$\log(\beta/\gamma)$.} having non vanishing gradient on the rectangle  $\mathcal{R}$. And since for example $e^{i\xi \phi(\alpha, \beta)} = \partial_\alpha (e^{i\xi \phi(\alpha, \beta)}) \frac{1}{i \xi\partial_\alpha \phi }$, by integration by parts (each time integrate by parts in the appropriate variable), each of these terms will behave as $\mathcal{O}(|\xi|^{-1})$. Thus the mixed terms vanish as $|\xi| \to \infty$. In other words, 
 $$ \iint_{\mathcal{R}} \frac{\big\vert -\alpha^{1/4+i\xi} - \beta^{1/4+i\xi}+
            \gamma^{1/4+i\xi}+1
        \big\vert^2}{4\alpha\beta\gamma} 
 \dd\alpha\dd\beta \to \iint_{\mathcal{R}} \frac{\alpha^{1/2}+\beta^{1/2}+\gamma^{1/2}+1}{4\alpha\beta\gamma} 
 \dd\alpha\dd\beta > 0$$
 as $|\xi| \to \infty$, and
 so the claim follows. 
 
 To see the second claim \eqref{eq:SG_at_infty_2}, we will compare the integral kernel of $D_\mu^{restr}$ at $\mu>0$ with the one at $\mu=0$, $D_0^{restr}$, and will apply the first part of the Lemma.
 Indeed $$\frac{\omega^{5/2}}{(\omega+\mu)(\alpha\omega+\mu)(\beta\omega+\mu)(\gamma\omega+\mu)} = \frac{\omega^{-3/2}}{\alpha\beta\gamma}
    \prod_{\ell \in\{1,\alpha,\beta,\gamma\}}
    \frac{\ell \omega}{\ell\omega+\mu}.$$
    We are going to lower bound that product on the RHS using that all the frequencies are comparable on $\mathcal{R} =[R_1,R_2]^2$. 
    
  For functions supported in $[M^*,\infty)$ with the integrand of $D_0^{restr}(h_\mu)$ nonzero, means that at least one of $\omega,\alpha\omega, \beta\omega, \gamma\omega $ are in $[M^*,\infty)$. Also since we are restricted on the bounded rectangle $\mathcal{R} =[R_1,R_2]^2$ with $1\leq R_1\leq \alpha, \beta \leq R_2$, $2R_1-1\leq \gamma\leq 2R_2-1$ and so for $M^* \geq C(\mathcal{R}) \mu$, with $C(\mathcal{R}):=\sup_{(\alpha, \beta) \in \mathcal{R} } 
  \max\{1,\alpha,\beta,\gamma\}<\infty$, we have:  
  $$ \ell\omega\geq \omega \geq \frac{M^*}{C(\mathcal{R})} \geq \mu, \quad \text{ for } \ell \in\{1,\alpha,\beta,\gamma\}.$$ This yields 
  $$\frac{\ell \omega}{\ell\omega+\mu} \geq \frac{1}{2} \ \Longrightarrow\  \ \frac{\omega^{-3/2}}{\alpha\beta\gamma}
    \prod_{\ell \in\{1,\alpha,\beta,\gamma\}}
    \frac{\ell \omega}{\ell\omega+\mu}\geq \frac{1}{16} \frac{\omega^{-3/2}}{\alpha\beta\gamma}. $$
    Thus
 \begin{equation}
     \begin{split}
         D_\mu (g) \geq D_\mu^{restr}(g)&\geq \frac{1}{16}D_0^{restr}(h_\mu)
  \geq \frac{\lambda_{\mathcal{R}}}{16} \int_0^\infty |h_\mu (\omega)|^2\omega^{-3/2} \dd \omega \\ & =  \frac{\lambda_{\mathcal{R}}}{16}\int_0^\infty
    |g(\omega)|^2\sqrt{\omega}
    \left(1+\frac{\mu}{\omega}\right)^2
    \dd \omega \geq \frac{\lambda_{\mathcal{R}}}{16} \|g\|_{L^2(\sqrt{\omega})}^2
     \end{split}
 \end{equation}
 where in the second inequality we applied the lower bound on the integral kernel computed just above, in the third inequality we applied the first part of the Proposition, \eqref{eq:SG_at_infty_1}, and then that $h_\mu(z)=(z+\mu)g(z)$. 
\end{proof}

\begin{lemma}[Transfer coercivity to arbitrary functions] \label{lemm:transfer_coercivity}
    There exists $M^*>0$ so that for all $M>M^*$
    \begin{equation} \label{eq:lemm:transfer_coercivity1}
        \begin{split}
\|\chi_{(M,\infty)}g\|_{L^2(\sqrt{\omega}\dd\omega)}^2 \lesssim_\mu D_\mu(g) + \mathcal{E}\text{rr}(M)   \|g\|_{L^2(\sqrt{\omega}\dd\omega)}^2
        \end{split}
    \end{equation}
    where $\mathcal{E}\text{rr}(M)\to 0$ as $M\to \infty$. 
In particular, if $$(g_n)_n \subset L^2(\sqrt{\omega})\ \text{ with }\ \| g_n \|_{L^2(\sqrt{\omega})}=1 \ \text{ and }\ D_\mu(g_n) \to 0, $$ then 
$$\lim_{M\to\infty}
    \limsup_{n\to\infty} \| \chi_{(M,\infty)}g_n \|_{L^2(\sqrt{\omega}\dd\omega)} =0. $$
\end{lemma}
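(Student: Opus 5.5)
We would prove Lemma \ref{lemm:transfer_coercivity} by a smooth localisation in logarithmic scale, applying the coercivity \eqref{eq:SG_at_infty_2} of Proposition \ref{Prop:SG_at_infty} to a cut-off version of $g$ and controlling the commutator error in the restricted Dirichlet form. We fix $M^*$ as in Proposition \ref{Prop:SG_at_infty} and, for $M>M^*$ large, choose a cut-off $\psi_M(\omega)=\psi\big(\log\omega/\log M\big)$ with $\psi\in C^\infty$, $0\le\psi\le1$, $\psi(s)=0$ for $s\le 1/2$ and $\psi(s)=1$ for $s\ge1$. Then $\psi_M=1$ on $(M,\infty)$, $\operatorname{supp}\psi_M\subset[\sqrt M,\infty)\subset[M^*,\infty)$ once $M\ge (M^*)^2$, and $|\omega\psi_M'(\omega)|\lesssim(\log M)^{-1}$. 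Applying \eqref{eq:SG_at_infty_2} to $\psi_M g$ gives
\[
\|\chi_{(M,\infty)}g\|^2_{L^2(\sqrt\omega)}\le\|\psi_M g\|^2_{L^2(\sqrt\omega)}\le \lambda_\infty^{-1}D^{restr}_\mu(\psi_M g),
\]
so it remains to compare $D^{restr}_\mu(\psi_M g)$ with $D^{restr}_\mu(g)\le D_\mu(g)$.

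Writing $h_\mu=(\omega+\mu)g$, the bracket for $\psi_Mg$ is
\[
\psi_M(\omega)\big[-h_\mu(\alpha\omega)-h_\mu(\beta\omega)+h_\mu(\gamma\omega)+h_\mu(\omega)\big]+\sum_{\ell\in\{\alpha,\beta,\gamma\}}\pm\big(\psi_M(\ell\omega)-\psi_M(\omega)\big)h_\mu(\ell\omega).
\]
Using $(a+b)^2\le 2a^2+2b^2$ and $\psi_M\le1$, the first part contributes at most $2D^{restr}_\mu(g)$. For the commutator terms we use that on $\mathcal R=[R_1,R_2]^2$ the ratios $\ell$ lie in a fixed compact subset of $(0,\infty)$, so the mean value theorem in $\log\omega$ gives $|\psi_M(\ell\omega)-\psi_M(\omega)|\lesssim_{\mathcal R}\|\psi'\|_\infty(\log M)^{-1}$. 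The kernel $\omega^{5/2}/\prod(\ell\omega+\mu)$ is bounded by $\omega^{-3/2}/(\alpha\beta\gamma)$, and after substituting $z=\ell\omega$ (a bounded Jacobian on $\mathcal R$) each commutator term is $\lesssim_{\mathcal R}(\log M)^{-2}\int|h_\mu(z)|^2z^{-3/2}\dd z$. Moreover, these differences vanish unless $\ell\omega\gtrsim\sqrt M/C(\mathcal R)\ge\mu$, so there $|h_\mu(z)|^2z^{-3/2}\le4|g(z)|^2\sqrt z$. This yields \eqref{eq:lemm:transfer_coercivity1} with $\mathcal E\text{rr}(M)=C(\log M)^{-2}$, matching the rate announced in the strategy section. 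The consequence for sequences then follows by taking $\limsup_n$ and letting $M\to\infty$.

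The main obstacle is the commutator step. The cut-off has to vary on logarithmic scales, because a cut-off on a linear scale would have $|\psi_M(\ell\omega)-\psi_M(\omega)|$ of order one for the comparable frequencies $\ell\omega,\omega$ near $M$, and the resulting error would not be small. The logarithmic choice makes this work only because the restricted form lives on the compact rectangle $\mathcal R$, where all four frequencies are within fixed multiplicative ratios; this is why we work with $D^{restr}_\mu$ rather than the full $D_\mu$. One also has to check that the support of $\psi_M g$ meets the hypothesis $\operatorname{supp}\subset[M^*,\infty)$ of Proposition \ref{Prop:SG_at_infty}, which the choice $M\ge(M^*)^2$ ensures.
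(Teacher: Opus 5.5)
Your proposal is correct and follows essentially the same route as the paper: a cutoff varying on logarithmic scales, the coercivity \eqref{eq:SG_at_infty_2} applied to the localised function, and the commutator terms controlled by the mean value theorem in $\log\omega$ on the compact rectangle $\mathcal R$, giving an error of order $(\log M)^{-2}$. The differences are cosmetic: your $\psi(\log\omega/\log M)$ transitions on $[\sqrt M,M]$ whereas the paper's $\eta\big((\omega/M)^{\varepsilon_M}\big)$ with $\varepsilon_M=\log 2/\log(M/M^*)$ transitions on $[M^*,M]$, and you absorb $\mu$ using the support of the commutator rather than the paper's direct kernel bound $(\ell\omega+\mu)^2/\prod_j(j\omega+\mu)\lesssim(\omega+\mu)^{-2}$.
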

\begin{proof}
    We only show \eqref{eq:lemm:transfer_coercivity1}, since the second claim follows immediately. 
    
    We are going to build a smooth cut off function, 
    $0\leq \eta_M \leq 1 \in C^\infty((0,\infty))$, for a rescaled $\eta \in C^\infty([0,\infty)))$ with $ 0 \leq \eta \leq 1$, $\eta(r)=1$ when $r\geq 1$ and  $\eta(r)=0$ when $r\leq 1/2$. 
    And so that the product 
    \begin{align}\label{eq:eq:transfer_coercivityProof0.5}
    \eta_M g \text{ is supported in } [M^*, \infty), 
    \end{align}
    for some $M^*>0$. Given that we have this function then 
    we consider the product $\eta_M h_\mu$ for which the collisions in the Dirichlet from become 
    \begin{equation}\label{eq:transfer_coercivityProof1}
        \begin{split}
         &-(\eta_M h_\mu)(\alpha\omega) -(\eta_M h_\mu)(\beta\omega) + (\eta_M h_\mu)(\gamma\omega)  + (\eta_M h_\mu)(\omega) = \\ &
         \hspace{0.8cm}
         \eta_M(\omega) \Big[ 
         -h_\mu(\alpha\omega) - h_\mu(\beta\omega) + h_\mu(\gamma\omega) +h_\mu(\omega) \Big]  +  \\ 
         & \hspace{1cm}  h_\mu(\beta\omega)
         \Big[\eta_M (\omega) - \eta_M(\beta\omega)
         \Big] +h_\mu(\gamma\omega)
         \Big[\eta_M (\gamma\omega) - \eta_M(\omega)
         \Big] + h_\mu(\alpha\omega)
         \Big[\eta_M (\omega) - \eta_M(\alpha\omega)
         \Big].
        \end{split}
    \end{equation}
    An application of \eqref{eq:SG_at_infty_2} for $\eta_M g$ yields
    \begin{equation} \label{eq:transfer_coercivityProof1.5}
        \begin{split}
            D_\mu(\eta_M g) \geq \lambda_\infty \|\eta_M g\|_{L^2(\sqrt{\omega})}^2
        \end{split}
    \end{equation}
    and using the computation above in \eqref{eq:transfer_coercivityProof1} together with the fact that $|y_1 + \dots + y_4|^2 \le 4 \sum_{j=1}^4|y_j|^2$, the left-hand side is 
    \begin{equation} \label{eq:transfer_coercivityProof2}
    \begin{split}
        &D_\mu^{restr}(\eta_M g) \lesssim \\
        &\int_0^\infty \left\{ \iint_{\mathcal{R}} \frac{\omega^{5/2}}{ \prod_{j\in \{1,\alpha, \beta, \gamma\}}(j \omega+\mu)} \eta_M^2 (\omega) \Big[ 
         -h_\mu(\alpha\omega) - h_\mu(\beta\omega) + h_\mu(\gamma\omega) +h_\mu(\omega) \Big]^2 \dd\alpha\dd\beta \right\} \dd\omega\\
         & + \sum_{\ell \in \{1,\alpha, \beta, \gamma\}} \int_0^\infty \iint_{\mathcal{R}} \frac{\omega^{5/2}}{ \prod_{j\in \{1,\alpha, \beta, \gamma\}}(j \omega+\mu)}|h_\mu (\ell\omega)|^2|\eta_M (\omega) - \eta_M(\ell\omega) |^2 \dd\alpha\dd\beta 
         \dd\omega \\
         & \lesssim D_\mu(g)  + \\
         & \sum_{\ell \in \{1,\alpha, \beta, \gamma\}} \int_0^\infty \iint_{\mathcal{R}} \frac{\omega^{5/2}}{ \prod_{j\in \{1,\alpha, \beta, \gamma\}}(j \omega+\mu)} | \ell\omega + \mu|^2 |g (\ell\omega)|^2
         |\eta_M (\omega) - \eta_M(\ell\omega) |^2 \dd\alpha\dd\beta 
         \dd\omega
        \end{split}
    \end{equation}
    where for the first line, we bounded the cutoff function to recreate the whole Dirichlet form. In the line below used  that $h_\mu(\ell\omega) = (\ell\omega + \mu)g(\ell\omega)$. 
    %Now for the second line we are going to use that  $\frac{\omega^{4}}{ \prod_{j\in \{1,\alpha, \beta, \gamma\}}(j \omega+\mu)}$ is bounded below and above (as $\mu>0$). This implies that the last line is equivalent to 
    Now for the last line we use that $1\leq \ell \leq C(\mathcal{R}):=\sup_{(\alpha, \beta) \in \mathcal{R} } 
  \max\{1,\alpha,\beta,\gamma\}<\infty$ to get that $\omega+\mu \leq (\ell\omega+\mu)\leq C_{\mathcal{R}} (\omega+\mu)$. Thus it is bounded by 
    \begin{equation} \label{eq:transfer_coercivityProof3}
        \begin{split} & 
        \lesssim \sum_{\ell \in \{1,\alpha, \beta, \gamma\}} \int_0^\infty \iint_{\mathcal{R}}  \frac{\omega^{5/2}}{(\omega+\mu)^2} 
    |g (\ell\omega)|^2|\eta_M (\omega) - \eta_M(\ell\omega) |^2 \dd\alpha\dd\beta \dd\omega\\
    &\lesssim \sum_{\ell \in \{1,\alpha, \beta, \gamma\}} \int_0^\infty \iint_{\mathcal{R}} \sqrt{\omega}|g (\ell\omega)|^2|\eta_M (\omega) - \eta_M(\ell\omega) |^2 \dd\alpha\dd\beta \dd\omega,  \end{split}
    \end{equation}
    since $\omega^2/(\omega+\mu)^2 \leq \mathcal{O}(1)$. 
    Finally if our cutoff function is built so that it additionally satisfies 
     \begin{align} \label{eq:transfer_coercivityProof3.5}
     \sup_{\omega>0} \omega |\eta_M'(\omega)| \lesssim \mathcal{E}\text{rr}(M), 
     \end{align}
     for some error term $\mathcal{E}\text{rr}(M)$, we may take the cutoff term outside of the integral as it will be bounded uniformly in $\omega$. Therefore under this additional condition, we may conclude that 
     \begin{align} \label{eq:transfer_coercivityProof4}
     \sum_{\ell \in \{1,\alpha, \beta, \gamma\}} \int_0^\infty \iint_{\mathcal{R}} \sqrt{\omega}|g (\ell\omega)|^2|\eta_M (\omega) - \eta_M(\ell\omega) |^2 \dd\alpha\dd\beta \dd\omega \lesssim \mathcal{E}\text{rr}(M)^2 \|g\|_{L^2(\sqrt{\omega})}^2.
     \end{align}
     after changing the variable $\ell\omega = \omega'$ and applying the mean value theorem: $| \eta_M (\omega) - \eta_M(\ell\omega)|\leq \int_{\omega}^{\ell\omega}| \eta_M'(s)| \dd s\leq \mathcal{E}\text{rr}(M) |\log (\ell)| \lesssim_{\mathcal{R}} \mathcal{E}\text{rr}(M)$ since $\ell$ is bounded above and below on $\mathcal{R}$.  
     
     Before we build this cutoff, let us show how we conclude: Combining \eqref{eq:transfer_coercivityProof1}, \eqref{eq:transfer_coercivityProof1.5}, \eqref{eq:transfer_coercivityProof2}, \eqref{eq:transfer_coercivityProof4} gives 
     $$ \lambda_\infty  \|\chi_{(M,\infty)}g\|_{L^2(\sqrt{\omega})}^2 \leq  \lambda_\infty \|\eta_M g\|_{L^2(\sqrt{\omega})}^2 \leq  D_\mu^{restr}(\eta_M g) \lesssim D_\mu(g) + \mathcal{E}\text{rr}(M)^2 \|g\|_{L^2(\sqrt{\omega})}^2$$
     which concludes the claim \eqref{eq:lemm:transfer_coercivity1}. 
     
     Finally, for the cutoff that does the job, take for $M\geq M_*$,  $$\eta_M(\omega) := \eta \left( \left(\frac{\omega}{M}\right)^{\epsilon_M} \right) $$
     which is $0$ whenever $\omega \leq M \left( \frac{1}{2} \right)^{1/\varepsilon_M}$ and is $1$ whenever $\omega\geq M$.  We then choose $\varepsilon_M$ so that we have precisely the desired support property \eqref{eq:eq:transfer_coercivityProof0.5}: $\left( \frac{1}{2} \right)^{1/\varepsilon_M} = M^*/M$, equivalently $ \varepsilon_M = \log2/\log(M/M^*)$, which tends to $0$ as $M \to \infty$, which in turn implies that $\mathcal{E}\text{rr}(M)$ from \eqref{eq:transfer_coercivityProof3.5} does as well.
     Before we finish let us comment that if one considers rather the standard cutoff with $\varepsilon_M=1$, then we would not get a uniform-in-$\omega$ bound! 
\end{proof}

\begin{lemma}[Compactness for the localised $K_\mu$]
\label{lem:local_K_mu_compact}
For every $0<M_1<M_2<\infty$, the operator
$$\chi_{[M_1,M_2]}K_\mu\chi_{[M_1,M_2]}:L^2(\sqrt{\omega}\dd\omega)\to L^2(\sqrt{\omega}\dd\omega)
$$
is compact.
\end{lemma}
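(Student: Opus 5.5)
The natural plan is to show that the localised operator is Hilbert--Schmidt on $L^2(\sqrt{\omega}\dd\omega)$, after an approximation step for the region where the kernel is singular. We write $K_\mu = K_{1,\mu}+K_{2,\mu}-K_{3,\mu}$, the three pieces carrying $g_1$, $g_2$ and $g_3$. In each piece we change variables so that the argument of $g$ is the integration variable; for $K_{1,\mu}$ this is $(\omega_1,\omega_2)\mapsto(\omega_1,\omega_3)$, as in the proof of Proposition \ref{prop:noncompactness}. We then integrate out the remaining free frequency. This gives an integral kernel
$$(\chi K_{i,\mu}\chi g)(\omega)=\int_{M_1}^{M_2} k_i(\omega,\omega')\,g(\omega')\dd\omega',\qquad \omega\in[M_1,M_2].$$
For example,
$$k_1(\omega,\omega')=\int_{(\omega-\omega')_+}^\infty \Omega\,\frac{(\omega'+\mu)}{(\omega'+\mu)(\omega_2+\mu)(\omega'+\omega_2-\omega+\mu)}\dd\omega_2,$$
and $k_3$ is the analogous integral over $\omega_1\in(0,\omega+\omega')$.

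On $L^2(\sqrt{\omega}\dd\omega)$, Hilbert--Schmidt means
$$\int_{M_1}^{M_2}\!\!\int_{M_1}^{M_2}|k_i(\omega,\omega')|^2\,\frac{\sqrt{\omega}}{\sqrt{\omega'}}\dd\omega'\dd\omega<\infty,$$
once we conjugate by the unitary map $g\mapsto \omega^{1/4}g$ to $L^2(\dd\omega)$. On $[M_1,M_2]$ the weights $\omega^{\pm1/4}$ are bounded above and below, so it suffices that each $k_i$ be square integrable on the square $[M_1,M_2]^2$.

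For $k_3$ we use $\Omega\le 1$ and $\mu>0$, so that $n_1 n_2\le \mu^{-2}$ and $\omega_1$ ranges over the bounded interval $(0,\omega+\omega')$. This gives $|k_3|\le C_{\mu,M_2}$, which is bounded and hence certainly square integrable.

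For $k_1$ (and $k_2$, by symmetry) the $\omega_2$-integral runs to infinity. It converges because the integrand decays like $\omega_2^{-2}$, and the estimate \eqref{eq:tightat0_2} extends to $\omega\le M_2$: it gives $|k_1(\omega,\omega')|\le C_{\mu,M_2}(1+\log(1+\omega'/\mu))/(\omega'+\mu)$, again bounded on the square. The denominators never vanish since $\mu>0$, and $\Omega\le 1$, so no singularity appears in any of the three kernels.

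Each localised piece is therefore a Hilbert--Schmidt operator on $L^2([M_1,M_2])$, hence compact, and the sum $\chi_{[M_1,M_2]}K_\mu\chi_{[M_1,M_2]}$ is compact. The only step that needs care is justifying the rearrangement of $K_\mu$ into kernel form, i.e.\ the Fubini exchange. We do this first for $g\in C_c^\infty$ with $|g|$ in place of $g$, where all integrands are nonnegative and Tonelli applies; the bounds above show the resulting integrals are finite. The identity then extends to all of $L^2(\sqrt{\omega}\dd\omega)$ by density and the boundedness of $K_\mu$.

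I expect the main obstacle to be checking uniformity of the $k_1$ bound near the diagonal $\omega'\approx\omega$, where the lower limit $(\omega-\omega')_+$ tends to $0$. The integrand there is still bounded by $\mu^{-2}$, so the bound should hold, but it is the step to verify carefully.
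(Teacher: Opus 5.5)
Your proposal is correct and follows essentially the paper's own argument: the three kernels of $K_\mu$ are bounded on the square $[M_1,M_2]^2$, the weights $\omega^{\pm 1/4}$ are comparable to constants there, so the localised operator is Hilbert--Schmidt and hence compact. The diagonal concern you flag is harmless: with $s=\omega_2-(\omega-\omega')_+\ge 0$, both denominator factors in $k_1$ dominate $s+\mu$, so $k_1(\omega,\omega')\le\int_0^\infty (s+\mu)^{-2}\dd s=1/\mu$ uniformly in $(\omega,\omega')$.
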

\begin{proof}
This operator is in fact Hilbert-Schmidt and thus compact. Indeed by explicit calculations similar to \ref{lemm:Tightness_zero} show that the integral kernels of all three pieces of $K_\mu$, are bounded on the finite square $[M_1,M_2]^2$. Also in this square the norm on the flat $L^2$ and on the weighted $L^2(\sqrt{\omega})$ are equivalent. 
\end{proof}

We now have all the ingredients to give the proof of the Poincaré Inequality. 

\begin{proof}[Proof of Theorem \ref{theo:SGI}]
  We argue by contradiction. If the spectral gap inequality does not hold, then there exists a sequence $(g_n)_n$ with $g_n \perp \operatorname{Ker}(L_\mu)$, $\|g_n\|_{L^2(\sqrt{\omega}\dd\omega)}=1$ and $$ D_\mu(g_n) \to 0.$$
  Due to the boundedness of $(g_n)_n$ and weak compactness of the unit ball, we can find a weak limit, there exists $g$ so that $g_n \rightharpoonup g$. Now since $L_\mu$ is self-adjoint, non positive, and bounded we also have that $\|-L_\mu g_n\|_{L^2(\sqrt{\omega}\dd\omega)}^2 = \langle L_\mu^2 g_n,g_n \rangle \leq \|L_\mu\| \langle (-L_\mu g_n),g_n \rangle $ (since $-L_\mu \leq \|L_\mu\| I$ which is finite since $L_\mu$ is bounded). This gives that $\|-L_\mu g_n\|_{L^2(\sqrt{\omega}\dd\omega)}^2 \to 0$ and thus $L_\mu g_n \to 0$ strongly in $L^2(\sqrt{\omega})$. But also, $-L_\mu g_n \rightharpoonup -L_\mu g$ and so uniqueness of weak limits implies $L_\mu g=0$.  Thus $g \in \operatorname{Ker}(L_\mu)$ \footnote{
Equivalently for this argument, we can go through $\sqrt{-L_\mu}$ (since $-L_\mu$ is bounded, self-adjoint and nonnegative and so its square root is well-defined and bounded). Then $D_\mu(g_n)=\|\sqrt{-L_\mu} g_n\|_{L^2(\sqrt{\omega}\dd\omega)}^2\to 0.$
If $g_n\rightharpoonup g$ in ${L^2(\sqrt{\omega}\dd\omega)}$, then by continuity $\sqrt{-L_\mu} g_n \rightharpoonup \sqrt{-L_\mu} g$, and the left-hand side converges strongly to $0$. We conclude $\sqrt{-L_\mu} g=0$, and thus $g\in\ker L_\mu$.}. On the other hand, at the same time $g_n\perp \operatorname{Ker}(L_\mu)$, which is weakly closed (since the operator is closed) and so $g \perp \operatorname{Ker}(L_\mu)$. Therefore $g$ has to be $0$: $$g_n \rightharpoonup 0. $$ We will then show that $g_n \to 0$ strongly, by combining the local compactness with the coercivity in low and high frequencies, which will give us a contradiction. 

  From $\langle (-L_\mu g_n),g_n \rangle \to 0$ we have that $\langle A_\mu g_n, g_n\rangle - \langle K_\mu g_n, g_n\rangle \to 0$, and using Lemma \ref{lemm:A_mu} we get 
  \begin{equation}\label{eq:lowerbound on K_m}
  \frac{\pi^2}{6} \|g_n\|_{L^2(\sqrt{\omega})}^2 \leq \langle K_\mu g_n, g_n\rangle + o_n(1).
  \end{equation}
  
  If $K_\mu$ were compact we would be done. But it is not, so now the idea is to localise by introducing $\chi_{[M_1,M_2]}$ for $0<M_1<M_2<\infty$. Then 
  \begin{equation} \label{eq:SG_Theo_PRoof1}
      \begin{split}
          \langle K_\mu g_n, g_n\rangle & = \big\langle  K_\mu \chi_{[M_1,M_2]} g_n, \chi_{[M_1,M_2]} g_n \big\rangle 
          \\
          & \hspace{1cm}
          +  \big\langle  K_\mu (1-\chi_{[M_1,M_2]}) g_n, \chi_{[M_1,M_2]}g_n \big\rangle  
           \\ & \hspace{2cm}
          +\big\langle  K_\mu \chi_{[M_1,M_2]} g_n, (1-\chi_{[M_1,M_2]}) g_n \big\rangle 
          \\
          & \hspace{4cm} + 
          \big\langle K_\mu (1-\chi_{[M_1,M_2]}) g_n, (1-\chi_{[M_1,M_2]}) g_n\big\rangle.
      \end{split}
  \end{equation}
  The first term involves a compact operator, and so 
  $$\big\langle  K_\mu \chi_{[M_1,M_2]} g_n, \chi_{[M_1,M_2]} g_n \big\rangle = \langle \chi_{[M_1,M_2]} K_\mu \chi_{[M_1,M_2]} g_n,  g_n\rangle \to 0$$ since $\chi_{[M_1,M_2]} K_\mu \chi_{[M_1,M_2]}$ is compact, see Lemma \ref{lem:local_K_mu_compact}, and $g_n \rightharpoonup 0$.

For the rest of the terms we will use that $K_\mu$ is bounded and the tightness property at both infinity and zero from Propositions \ref{lemm:Tightness_zero} and \ref{lemm:transfer_coercivity} that give that no mass concentrates at neither $0$ nor infinity, i.e. 
  $$ \lim_{M_1 \downarrow 0}\limsup_{n\to \infty} 
\|\chi_{(0,M_1)}g_n\|_{L^2(\sqrt{\omega}d\omega)} =0
  $$ and 
 $$ \lim_{M_2 \uparrow \infty}\limsup_{n\to \infty} 
\|\chi_{(M_2, \infty)} g_n\|_{L^2(\sqrt{\omega}d\omega)} = 0, 
  $$ 
then we'd have that for all $\varepsilon >0$: we can find $M_1, M_2$ sufficiently small and large respectively so that  $\limsup_{n \to \infty} \| \chi_{(0,M_1)\cup(M_2, \infty)}g_n\||_{L^2(\sqrt{\omega}d\omega)}^2< \varepsilon$ or that  
  $$ \liminf_{n \to \infty} \| \chi_{[M_1,M_2]}g_n\|_{L^2(\sqrt{\omega}d\omega)}^2 \geq 1-\varepsilon,  
  $$
  (since the whole mass is $1$: $\|g_n\|_{L^2(\sqrt{\omega}d\omega)}=1$). This implies that for the mixed terms in \eqref{eq:SG_Theo_PRoof1}:
  \begin{align*}
    \limsup_{n \to \infty}  \vert \langle  K_\mu \chi_{[M_1,M_2]} g_n, (1-\chi_{[M_1,M_2]}) g_n\rangle \vert \leq \|K_\mu\|_{\mathcal{L}(L^2(\sqrt{\omega}))} \varepsilon
  \end{align*}
and 
\begin{align*}
    \limsup_{n \to \infty}  \vert \langle K_\mu (1-\chi_{[M_1,M_2]}) g_n, \chi_{[M_1,M_2]} g_n\rangle \vert \leq \|K_\mu\|_{\mathcal{L}(L^2(\sqrt{\omega}))} \varepsilon.
  \end{align*}
  Also regarding the last term, we write  
  \begin{align*}
    \limsup_{n \to \infty}  \vert \langle K_\mu (1-\chi_{[M_1,M_2]}) g_n, (1-\chi_{[M_1,M_2]}) g_n\rangle \vert \leq \|K_\mu\|_{\mathcal{L}(L^2(\sqrt{\omega}))} \varepsilon^2. 
  \end{align*}
  
  Then altogether we have that
  $$\limsup_{n\to\infty} \left\vert \langle K_\mu g_n , g_n\rangle_{L^2(\sqrt{\omega})} \right\vert \le 2\|K_\mu\|_{\mathcal{L}(L^2(\sqrt{\omega}))}\varepsilon + \|K_\mu\|_{\mathcal{L}(L^2(\sqrt{\omega}))}\varepsilon^2
  $$
  and since $\varepsilon>0$ was arbitrary, we conclude that 
  $$  \left\vert \langle K_\mu g_n , g_n\rangle_{L^2(\sqrt{\omega})} \right\vert \to 0. $$
  But at the same time we have from \eqref{eq:lowerbound on K_m} that 
  $$\frac{\pi^2}{6} \|g_n\|_{L^2(\sqrt{\omega})}^2 \leq \langle K_\mu g_n, g_n\rangle + o_n(1)$$
  and now the right-hand side tends to zero, which contradicts the mass one assumption. Thus a spectral gap has to exist, for every $g \perp \operatorname{Ker} L_\mu$. For arbitrary $g \in L^2(\sqrt{\omega}\dd\omega)$, we may decompose $g = \Pi g + (I-\Pi)g$, where $\Pi$ is the projection on the $\operatorname{Ker}(L_\mu)$ and use that 
  $D_\mu(g) = D_\mu((I-\Pi)g)$ (since $L_\mu$ is self-adjoint), to see the statement of the Theorem. 
\end{proof}

This implies exponential return to the Rayleigh-Jeans equilibrium.  

\begin{corollary}
Let $\Pi$ denote the $L^2(\sqrt{\omega}\dd\omega)$-orthogonal projection onto $\operatorname{Ker}L_\mu$. Then for the linear semigroup  
$S_\mu(t) = e^{tL_\mu}$ it holds that for all $t \geq 0$:
$$
    \|S(t)(I-\Pi)g\|_{L^2(\sqrt{\omega})}
    \le
    e^{-\lambda_\mu t}\|(I-\Pi)g\|_{L^2(\sqrt{\omega})}.
$$
\end{corollary}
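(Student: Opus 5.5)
The estimate follows from combining the spectral gap inequality of Theorem \ref{theo:SGI} with the invariance of $(\operatorname{Ker}L_\mu)^\perp$ under the semigroup. The argument is a standard energy estimate.

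\emph{Invariance of the subspace.} Since $L_\mu$ is bounded and self-adjoint on $L^2(\sqrt{\omega}\dd\omega)$, for every $\varphi\in\operatorname{Ker}(L_\mu)$ we have $e^{tL_\mu}\varphi=\varphi$. Hence, for $u_0=(I-\Pi)g$,
$$\langle e^{tL_\mu}u_0,\varphi\rangle=\langle u_0,e^{tL_\mu}\varphi\rangle=\langle u_0,\varphi\rangle=0 .$$
So $u(t):=S(t)(I-\Pi)g$ stays in $(\operatorname{Ker}L_\mu)^\perp$, i.e. $\Pi u(t)=0$ for all $t\ge0$. Equivalently, $S(t)$ commutes with the spectral projection $\Pi$.

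\emph{Energy estimate.} Because $L_\mu$ is bounded, $u\in C^1([0,\infty);L^2(\sqrt{\omega}\dd\omega))$ with $u'=L_\mu u$. Therefore
$$\frac{d}{dt}\|u(t)\|^2_{L^2(\sqrt\omega)}=2\langle L_\mu u,u\rangle=-2D_\mu(u(t)).$$
Applying Theorem \ref{theo:SGI} with $(I-\Pi)u(t)=u(t)$ gives
$$\frac{d}{dt}\|u(t)\|^2\le-2\lambda_\mu\|u(t)\|^2 .$$
Gr\"onwall's lemma then yields $\|u(t)\|^2\le e^{-2\lambda_\mu t}\|u(0)\|^2$, and taking square roots gives the claim.

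\emph{Alternative via the spectral theorem.} One can instead argue through the spectral theorem. The inequality $-L_\mu\ge\lambda_\mu$ holds on the invariant subspace $(\operatorname{Ker}L_\mu)^\perp$, so the spectrum of the restriction of $L_\mu$ to that subspace lies in $(-\infty,-\lambda_\mu]$. Consequently $\|e^{tL_\mu}|_{(\operatorname{Ker}L_\mu)^\perp}\|\le\sup_{\lambda\le-\lambda_\mu}e^{t\lambda}=e^{-\lambda_\mu t}$.

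There is no real obstacle here. The only point needing care is the invariance of $(\operatorname{Ker}L_\mu)^\perp$, which is needed so that the spectral gap inequality can be applied at every time $t$. It follows directly from self-adjointness, as shown above.
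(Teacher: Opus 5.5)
Your proposal is correct and follows the same route as the paper: the energy identity $\frac{d}{dt}\|S(t)(I-\Pi)g\|^2_{L^2(\sqrt{\omega})}=2\langle L_\mu S(t)(I-\Pi)g,S(t)(I-\Pi)g\rangle$, combined with the spectral gap inequality and Gr\"onwall. Your self-adjointness argument for the invariance of $(\operatorname{Ker}L_\mu)^\perp$ under $S(t)$ is a step the paper uses only tacitly when it applies the Poincar\'e inequality at positive times.
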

\begin{proof}
    Standard energy estimate on $\| (I-\Pi) g_t\|_{L^2(\sqrt{\omega})}$ together with the Poincaré Inequality yields: 
    $$
    \frac{1}{2}\frac{\dd}{\dd t}\|S(t)(I-\Pi)g\|_{L^2(\sqrt{\omega})}^2
    =
    \big\langle L_\mu S(t)(I-\Pi)g,S(t)(I-\Pi)g \big\rangle_{L^2(\sqrt{\omega})}\le
    -\lambda_\mu\|S(t)(I-\Pi)g\|_{L^2(\sqrt{\omega})}^2.
$$
And then Grönwall implies the claim. 
%$$ \|S(t)(I-\Pi)g\| \le
 %   e^{-\lambda_\mu t}\|(I-\Pi)g\|.
%$$
\end{proof}

\section{Nonlinear Global Well-Posedness} \label{sec:GWP}
Given the existence of a spectral gap, we are now in shape to prove global in time well-posedness for the nonlinear problem, for small initial data. For this we are going to use the control of the nonlinearities in Section \ref{sec:LWP}.

\begin{theorem}
Let $\mu>0$ and $C_\mu$ the constant from Proposition \ref{prop: control NL}. For every $0<\delta<\lambda_\mu$, there exists $\rho = \rho(\mu,\delta)>0$ that satisfies 
$  \big( \rho+ \rho^2
    \big)
    \leq C_\mu^{-1} \big(\lambda_\mu-\delta\big)$, and is so that, if $g_0 \perp \operatorname{Ker}(L_\mu)$ with  $\|g_0\|_{L^2(\sqrt{\omega})} < \rho$, then the local in time solution found in Theorem \ref{Theo: LWP} is extended uniquely to a global solution $g \in C^1( [0,\infty);L^2(\sqrt{\omega}))$. 
    
    Moreover, if $g_0 \perp \operatorname{Ker}(L_\mu)$ then it remains so: $g_t \perp \operatorname{Ker}(L_\mu)$ for all $t\geq 0$ and
    \begin{equation}
        \|g_t\|_{L^2(\sqrt{\omega})} \leq e^{- \delta t} \|g_0\|_{L^2(\sqrt{\omega})} \quad \text{ for all } t\geq 0.
    \end{equation}
\end{theorem}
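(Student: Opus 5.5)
The plan has three steps: show that the nonlinear flow preserves $(\operatorname{Ker}L_\mu)^\perp$, derive an energy inequality from the spectral gap of Theorem \ref{theo:SGI} together with the bounds of Proposition \ref{prop: control NL}, and close with a continuity (bootstrap) argument combined with the blow-up alternative of Theorem \ref{Theo: LWP}.

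\emph{Step 1: preservation of orthogonality.} By Proposition \ref{Prop:Kernel_L}, $\operatorname{Ker}(L_\mu)=\langle n\rangle$, so we need $\frac{\dd}{\dd t}\langle g_t,n\rangle=0$ along the local solution. For the linear part, $\langle L_\mu g,n\rangle=\langle g,L_\mu n\rangle=0$ by self-adjointness.

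For the nonlinear part we observe that $(L_\mu+\Gamma_\mu+Q_\mu)(g)=\mathcal F_\mu(g)=-n^{-1}\iint\Omega\,q(f)$. Hence, formally, $\langle \mathcal F_\mu(g),n\rangle_{L^2(\sqrt\omega)}$ is the collision integral tested against $\varphi\equiv1$. After the four-fold symmetrisation this produces the factor $\varphi_0+\varphi_3-\varphi_1-\varphi_2=0$, which is conservation of wave action.

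Since the individual pieces of the collision integral may diverge for RJ-type profiles, I would make this rigorous exactly as in Lemma \ref{lemm:positiv_F_mu}. First use the symmetric cutoff $X_N$ to define $\mathcal F_{\mu,N}$. For it, Fubini is legitimate, and the symmetrisation gives $\langle\mathcal F_{\mu,N}(g),n\rangle=0$ exactly, because $X_N$ is symmetric in the four variables. Then let $N\to\infty$, using the convergence $\mathcal F_{\mu,N}(g)\to\mathcal F_\mu(g)$ in $L^2(\sqrt\omega)$ established in that proof and the fact that $n\in L^2(\sqrt\omega)$ for $\mu>0$. Subtracting the linear part, which was already treated, gives $\langle\Gamma_\mu(g,g)+Q_\mu(g,g,g),n\rangle=0$. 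Consequently, since $g\in C^1$, we get $g_t\perp\operatorname{Ker}(L_\mu)$ for all $t<T_{\max}$. I expect this justification to be the main technical point, although the machinery is already available in Section \ref{sec:LWP}.

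\emph{Step 2: energy inequality.} Since $g_t=(I-\Pi)g_t$, Theorem \ref{theo:SGI} and Proposition \ref{prop: control NL} give
$$\tfrac12\tfrac{\dd}{\dd t}\|g_t\|^2=\langle L_\mu g_t,g_t\rangle+\langle\Gamma_\mu(g_t,g_t)+Q_\mu(g_t,g_t,g_t),g_t\rangle\le-\big(\lambda_\mu-C_\mu(\|g_t\|+\|g_t\|^2)\big)\|g_t\|^2 ,$$
where all norms are in $L^2(\sqrt\omega)$.

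\emph{Step 3: bootstrap and global existence.} Let $T^*:=\sup\{t<T_{\max}:\|g_s\|\le\rho \text{ for all } s\le t\}$, which is positive because $\|g_0\|<\rho$ and $t\mapsto\|g_t\|$ is continuous. On $[0,T^*)$, the choice $\rho+\rho^2\le C_\mu^{-1}(\lambda_\mu-\delta)$ turns the inequality of Step 2 into $\frac{\dd}{\dd t}\|g_t\|^2\le-2\delta\|g_t\|^2$. Gr\"onwall then yields $\|g_t\|\le e^{-\delta t}\|g_0\|<\rho$ on $[0,T^*)$.

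By continuity this forces $T^*=T_{\max}$. If $T_{\max}$ were finite, the norm would remain bounded by $\rho$, contradicting the blow-up criterion of Theorem \ref{Theo: LWP}; hence $T_{\max}=\infty$. The decay estimate $\|g_t\|\le e^{-\delta t}\|g_0\|$ then holds for all $t\ge0$, and uniqueness is inherited from the local theory.
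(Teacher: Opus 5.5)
Your proposal is correct and follows the same route as the paper. It propagates orthogonality to $\operatorname{Ker}(L_\mu)=\langle n\rangle$ via wave-action conservation, derives the energy inequality from Theorem \ref{theo:SGI} and Proposition \ref{prop: control NL}, and closes with a continuity argument using the blow-up alternative of Theorem \ref{Theo: LWP}; your Step 1 justification through the symmetric cutoff $X_N$ and the limit $\mathcal F_{\mu,N}\to\mathcal F_\mu$ is more careful than the paper, which simply invokes conservation of the collision invariant.
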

\begin{proof}
First we observe that the orthogonality to the $\operatorname{Ker}(L_\mu)$ is propagated by the
nonlinear flow. Indeed, if $\psi \in \operatorname{Ker}(L_\mu)$, then $\psi=n_\mu \phi$ where $\phi$ is collisional invariant (by characterization of the kernel in Prop. \ref{Prop:Kernel_L}). We compute 
\begin{align*}
\frac{d}{dt}\langle g_t,\psi\rangle_{L^2(\sqrt{\omega})}&=
\frac{d}{dt}\langle g_t,n_\mu\phi\rangle_{L^2(\sqrt{\omega})}=0
%\langle L_\mu g_t,\psi\rangle_{L^2(\sqrt{\omega})}+\langle\Gamma(g_t,g_t),\psi\rangle_{L^2(\sqrt{\omega})} + \langle Q(g_t,g_t, g_t),\psi\rangle_{L^2(\sqrt{\omega})} \\
%& = \langle\Gamma(g_t,g_t),\psi\rangle_{L^2(\sqrt{\omega})} + \langle Q(g_t,g_t, g_t),\psi\rangle_{L^2(\sqrt{\omega})}, 
\end{align*}
by conservation of the collision invariant $\phi$.

So, for as long as the solution $g_t$ exists, it remains orthogonal to the kernel, if it does so initially. 

Since $g_t\perp \operatorname{Ker} L_\mu$, the spectral gap and the nonlinear estimates from Proposition \ref{prop: control NL} give
\begin{equation}
\begin{split}
 \frac{1}{2}\frac{\dd}{\dd t}\|g_t\|_{L^2(\sqrt{\omega})}^2
&=\langle L_\mu g_t,g_t\rangle_{L^2(\sqrt{\omega})} +
\operatorname{Re}
\big\langle\Gamma (g_t,g_t),g_t \big\rangle_{L^2(\sqrt{\omega})} +
\operatorname{Re}
\big\langle Q (g_t,g_t,g_t),g_t \big\rangle_{L^2(\sqrt{\omega})} \\
 &\leq - \lambda_\mu \|g_t\|_{L^2(\sqrt{\omega})}^2 +
C_{\mu} \Big( \|g_t\|_{L^2(\sqrt{\omega})}^3+
 \|g_t\|_{L^2(\sqrt{\omega})}^4 \Big) \\
 &\leq
 \left( -\lambda_\mu+C_\mu\Big( \|g_t\|_{L^2(\sqrt{\omega})} +\|g_t\|_{L^2(\sqrt{\omega})} ^2 \Big)
\right)
\|g_t\|_{L^2(\sqrt{\omega})}^2.
\end{split}
\end{equation}
Now if $\|g_t\|_{L^2(\sqrt{\omega})} < \rho(\mu,\delta)$, with 
$C_\mu\big(\rho(\mu,\delta)+   \rho(\mu,\delta)^2 \big)\leq \lambda_\mu-\delta$, then 
\begin{align} \label{eq:GWP_proof1}
\frac{1}{2}\frac{\dd}{\dd t}\|g_t\|_{L^2(\sqrt{\omega})}^2 \leq -\delta  \|g_t\|_{L^2(\sqrt{\omega})}^2 \quad \Rightarrow\quad \|g_t\|_{L^2(\sqrt{\omega})}
\leq e^{-\delta t} \|g_0\|_{L^2(\sqrt{\omega})}.
\end{align}
Now it remains to show that $\|g_t\|_{L^2(\sqrt{\omega})} < \rho(\mu,\delta)$ for $0<t<T_{\max}$, where $T_{\max}$ is the maximal time of existence that we get from the local well-posedness, cf  Theorem \ref{Theo: LWP}. First note that by the same Theorem \ref{Theo: LWP} we have the blow-up criterion: 
$$ T_{\max}<\infty\quad \Rightarrow \quad \limsup_{t \uparrow T_{max}} \|g_t\|_{L^2(\sqrt{\omega})} = \infty, $$
which we will use to conclude that the solution is extended uniquely to a global one.

By continuity of $t\mapsto g_t$, we may assume that there exists a first time after $t=0$, say $t_*$, so that $\|g_{t_*}\|_{L^2(\sqrt{\omega})}=\rho(\mu,\delta)$. It has to be that $t_*>0$ by continuity and for all $\tau<t_*$, \eqref{eq:GWP_proof1} holds. In particular as $t \uparrow t_*$: 
$$ \|g_{t_*}\|_{L^2(\sqrt{\omega})} \leq e^{-\delta t_*}  \|g_{0}\|_{L^2(\sqrt{\omega})} < \rho(\mu,\delta), $$
which is a contradiction. Thus the solution $g_t$ has to remain bounded for as long as it exists: 
$$ \|g_{t}\|_{L^2(\sqrt{\omega})} < \rho(\mu,\delta),  \quad \forall\quad 0\leq t < T_{max},$$
or in other words $\sup_{0\leq t < T_{max}} \|g_{t}\|_{L^2(\sqrt{\omega})} < \rho(\mu,\delta)$.  Finally from the standard blow-up criterion above, we conclude that $T_{max}=\infty$. 

By Duhamel's formula the convergence rate is then improved up to $\lambda_\mu$.
\end{proof}

\bibliographystyle{alpha}
\bibliography{bibliography}

\end{document}